\newcommand{\V}{\mathbb{V}}
\newcommand{\Z}{{\mathbb Z}}
\newcommand{\Q}{{\mathbb Q}}
\newcommand{\R}{{\mathbb R}}
\newcommand{\C}{{\mathbb C}}
\newcommand{\F}{{\mathcal F}}
\newcommand{\Sp}{{\mathbb S}}
\newcommand{\Hy}{{\mathbb H}}
\newcommand{\h}{{\mathcal{H}}}  
\newcommand{\Cliff}{{\mathcal{C}}}  
\newcommand{\SB}{{\rm SB}}   
\newcommand{\bc}{\begin{center}}
\newcommand{\ec}{\end{center}}
\newcommand{\U}{{\mathcal U}}
\newcommand{\GL}{{\rm GL}}
\newcommand{\SL}{{\rm SL}}
\newcommand{\PSL}{{\rm PSL}}
\newcommand{\SU}{{\rm SU}}
\newcommand{\PSU}{{\rm PSU}}
\newcommand{\Iso}{{\rm Iso}}
\renewcommand{\O}{\mathcal{O}}
\newcommand{\BQ}{{\mathbb B}}
\newcommand{\Isom}{{\rm Isom}}
\newcommand{\SQ}{{\mathbb S}}
\newtheorem{theorem}{Theorem}[section]
\newtheorem{definition}[theorem]{Definition}
\newtheorem{lemma}[theorem]{Lemma}
\newtheorem{corollary}[theorem]{Corollary}
\newtheorem{proposition}[theorem]{Proposition}
\newtheorem{remark}[theorem]{Remark}
\newtheorem{algorithm}[theorem]{Algorithm}
\newtheorem{method}[theorem]{Method}
\title[On units in orders in $2$-by-$2$ matrices over quaternion algebras]{On units in orders in $2$-by-$2$ matrices over quaternion algebras with rational center}
\date{\today}
\subjclass[2010]{Primary 20H10;   
                Secondary 22E40, 16U60}  
\keywords{Hyperbolic Geometry,  Presentation, Clifford Algebra, Quaternion Algebra, Group Rings, Unit Group}
\thanks{During the writing process, the  author was supported by the University of Bielefeld and the research foundation FWO (Fonds voor
Wetenschappelijk Onderzoek Flanders).}
\author[A.~Kiefer]{Ann Kiefer}
\address{Department Wiskunde
Vrije Universiteit Brussel
Pleinlaan, 2
1050 Brussel
Belgium}
\email{ann.kiefer@vub.be}
\begin{document}

\begin{abstract}
We generalize an algorithm established in earlier work \cite{algebrapaper} to compute finitely many generators for a subgroup of finite index of an arithmetic group acting properly discontinuously on hyperbolic space of dimension $2$ and $3$, to hyperbolic space of higher dimensions using Clifford algebras.
 We hence get an algorithm which gives a finite set of generators of finite index subgroups of a discrete subgroup of Vahlen's group, i.e. a group of $2$-by-$2$ matrices with entries in the Clifford algebra satisfying certain  conditions. 
 The motivation comes from units in integral group rings and this new algorithm allows to handle unit groups of orders in $2$-by-$2$ matrices over rational quaternion algebras. The rings investigated are part of the so-called exceptional components of a rational  group algebra.  
\end{abstract}

\maketitle

\section{Introduction}

The aim of this paper is to present an algorithm computing finite generating sets of finite index subgroups of the groups $\SL_2(\O)$, where $\O$ is an order in a $2$-by-$2$ matrix algebra over a totally definite quaternion algebra with rational centre, i.e. over $(\frac{x,y}{\Q})$, with $x$ and $y$ negative integers. 
The motivation comes from the problem of describing finitely many units in the integral group ring $\Z G$ of a finite group $G$ that generate a subgroup of finite index in the unit group $\U (\Z G)$ of $\Z G$ (for more details see \cite{ericangelbook} and \cite{ericangelbook2}). 

The general idea is the following. To study the unit group $\U(\Z G)$ of the integral group ring $\Z G$, we consider the rational group ring $\Q G$. By Wedderburn-Artin's Theorem, $\Q G$ may be written as a direct product of matrix rings over division algebras, i.e. 
\begin{equation}\label{Wedderburn}
\Q G \cong \Pi_{i=1}^r M_{n_i}(D_i),
\end{equation}
where $D_i$ is a division algebra for every $1 \leq i \leq r$. We construct an order in the right hand side of (\ref{Wedderburn}). Therefore,  for $1 \leq i \leq r$, let $\O_i$ be some order in $D_i$. Then clearly $\Pi_{i=1}^r M_{n_i}(\O_i)$ is an order in $\Pi_{i=1}^r M_{n_i}(D_i)$. The unit group of the latter order is given by $\Pi_{i=1}^r \GL_{n_i}(\O_i)$. The group $\U(\Z G)$ is the group of units of an order in the left hand side of (\ref{Wedderburn}), while $\Pi_{i=1}^r \GL_{n_i}(\O_i)$ is the group of units of an order in the right hand side. It is well known that the groups of units of two orders in the same finite dimensional rational algebra are commensurable, i.e. they have a common subgroup that is of finite index in both. For more details we refer an interested reader to \cite[Chapter 2.9]{sehmil}. Moreover $\U(\mathcal{Z}(\O_i)) \times \SL_{n_i}(\O_i)$, the direct product of the central units in $\O_{i}$ and  the group consisting of the reduced norm one elements in $M_{n_{i}}(\O_{i})$, contains a subgroup of finite index isomorphic to a subgroup of finite index of $\GL_{n_i}(\O_i)$.
The group $\prod_{i=1}^{n} \U(\mathcal{Z}(\O_i))$ may be determined from Dirichlet's Unit Theorem and  it is commensurable with $\mathcal{Z} (\U (\Z G))$, the group of central units of $\Z G$.
For a large class of finite groups $G$ one can describe generators of a subgroup of finite index in  $\mathcal{Z} (\U (\Z G))$ \cite{JOdRV} (see also \cite{JdRV}).

Hence, up to commensurability, the problem of finding generators and relators for $\U(\Z G)$ reduces to finding a presentation of $\SL_{n_i}(\O_i)$ for every $1 \leq i \leq n$.
The congruence theorems allow to compute generators of finite index subgroups of $\SL_{n_i}(\O_i)$ 
when   $n_i \geq 3$ (without any further restrictions) and also  for $n_i=2$  and $n_i=1$, except if $M_{n_i}$ is a so-called exceptional component. Exceptional components are 
\begin{enumerate}
\item $M_2(\Q)$,
\item $M_2(\Q(\sqrt{-d}))$ with $d$ a positive square-free integer,
\item $M_2((\frac{x,y}{\Q}))$ with $(\frac{x,y}{\Q})$ a totally definite quaternion algebra and
\item non commutative division algebras $D_i$ that are not totally definite quaternion algebras. 
\end{enumerate}
For details, we refer the interested reader to \cite[Chapter 12]{ericangelbook}.

Hence the problem reduces to finding generators of a finite index subgroup of $\SL_2(\O)$ or $\U_1(\O)$, i.e. units of reduced norm $1$ in $\O$, for $\O$ an order in the division algebras listed under (1) to (4). One idea to attack these groups of units is to consider their group action on an adequate space. In \cite{PR}, Pita, del R\'{\i}o and Ruiz did this for the unit group of orders in small exceptional components of type (1) and (2), by letting them act on hyperbolic spaces of dimension $2$ and $3$. In \cite{jesetall}, the authors did the same thing but in the context of the unit group of a maximal order in a quaternion algebra. This constituted the first concrete example of an attack on the exceptional components of type (4). This was then finally generalized in \cite{algebrapaper}, where an algorithm, called the DAFC, is given to compute an 'approximate' fundamental domain, and hence generators, for subgroups of finite index of groups that are contained in the unit group of orders in quaternion algebras over $\Q$ or quadratic imaginary extensions of $\Q$, in particular for Bianchi groups.  As $\PSL_2(\R)$ and $\PSL_2(\C)$ are groups of orientation preserving isometries of the upper half-space model of hyperbolic spaces of dimension $2$ and $3$ respectively, the DAFC algorithm is established in the upper half-space model of hyperbolic space. As the ball model of hyperbolic space is more symmetric, some computations are carried out in the ball model. For better visualisation, the main algorithm is done in the upper half-space model. This work handled the exceptional components of type (1) and (2), as well as a part of exceptional components of type (4). The algorithm is based on Poincar\'e's Polyhedron Theorem, but does not use it in its most general version, as only an 'approximate' fundamental domain is necessary.  Algorithms that compute exact fundamental domains may be found in \cite{riley} and \cite{Page15}. In \cite{BCNS15} and \cite{JKdR16} first attempts were done to treat the unit group of an order in a more complicated exceptional component of type (4).
Examples of this are non totally definite quaternion algebras over extensions of $\Q$ of degree higher than $2$. Such an algebra shows up in the Wedderburn Artin decomposition of $\Q(Q_8 \times C_7)$, where $Q_8$ is the quaternion group of order $8$ and $C_7$ the cyclic group of order $7$. One of the components in the decomposition is $(\frac{-1,-1}{\Q(\xi_7)})$. In \cite{BCNS15}, the authors apply Vorono\"i's algorithm, while in \cite{JKdR16}, the authors consider actions on direct products of hyperbolic spaces. 

The exceptional components of type (3) were until now barely dealt with, except for the paper \cite{eiskiefvgel}, which we discuss later.
The goal of this paper is to develop a new method that deals with these components. This work is in fact a continuation of the work done in \cite{algebrapaper}. As explained above, in that paper, the authors use the action on hyperbolic spaces of dimension $2$ and $3$ and develop an algorithm, which takes as an input a discrete cofinite subgroup of $\PSL_2(\R)$ or $\PSL_2(\C)$ to return a finite-sided convex polyhedron that contains a Dirichlet fundamental domain of the discrete subgroup. Via this polyhedron a finite  generating set of a finite index subgroup of the given group is then computed. As stated above, there are already several algorithms that compute exact fundamental domains for groups acting properly discontinuously on hyperbolic space. The advantage of the algorithms in \cite{algebrapaper} and in this paper is that they are much simpler, as they do not need to use the complicated machinery of Poincar\'e's Polyhedron Theorem and do not need to deal with cusp points. In return the output will not be an exact fundamental domain for the input group, but only a polyhedron containing the fundamental domain. This polyhedron then yields a finite generating set of a finite index subgroup. As we are working with finite index subgroups, this is enough for our purpose. 

The algorithm developed in this paper may be applied to orders in non-totally definite quaternion algebras with centre $\Q$ or an imaginary quadratic extension of $\Q$ or to orders $\SL_2(\O)$ where $\O$ is an order in $\Q$ or an imaginary quadratic extension of $\Q$. 

In this paper, we generalize the DAFC algorithm to hyperbolic space of dimension $5$, in order to get a finite set of generators of finite index subgroups of $\SL_2((\frac{x,y}{\Z}))$. In  \cite[Theorem 7]{Wilker93}, it is shown that $\PSL_2((\frac{-1,-1}{\R}))$ acts properly discontinuously on hyperbolic space of dimension $5$. Unfortunately this action is not given directly by M\"obius transformations, but one has to pass via  the Poincar\'e extension, in contrast with  the classical case of hyperbolic spaces of dimension $2$ and $3$. Vahlen \cite{Vahlen02} introduced a special linear group of $2$-by-$2$ matrices over the real Clifford algebra $\Cliff_n(\R)$ of degree $n$. In \cite{Maass49} Maass also described this group. Recall that a Clifford algebra may also be seen as a twisted group algebra with a special cocycle. For details, we refer to \cite{AlbMaj}. They prove that it acts via M\"obius transformations on hyperbolic space of dimension $n+1$ and constitutes the group of orientation preserving isometries of $\Hy^5$. This generalizes the classical cases of Fuchsian and Kleinian groups to higher dimensions. In 1985, Ahlfors draws attention to this method in \cite{1985ahlfors} and denotes this group by $\SL_+(\Gamma_n(\R))$. More recent papers about generalizations of Kleinian groups to higher dimensions include \cite{kapovich07}, \cite{maclachlanetall} and \cite{waterman93}. In this paper we consider the discrete subgroup $\SL_+(\Gamma_n(\Z))$, which acts properly discontinuously on $\Hy^{n+1}$. In particular we consider the action of $\PSL_+(\Gamma_4(\Z))$ on $\Hy^5$ and generalize the DAFC to this case. In this way, we get an algorithm to compute a finite set of generators of finite index subgroups of $\SL_+(\Gamma_4(\Z))$. We then make use of the fact that $\SL_+(\Gamma_4(\Q))$ is isomorphic to $\SL_2((\frac{-1,-1}{\Q}))$ and use the concrete isomorphism given in \cite{ElsGrunMen} to get a finite generating set of finite index subgroups of $\SL_2((\frac{-1,-1}{\Z}))$. Concerning matrices over general quaternion algebras, i.e. $\SL_2((\frac{x,y}{\Z}))$ with $x$ and $y$ negative integers not necessarily equal to $-1$, we make use of the fact that over $\R$ all totally definite quaternion algebras are isomorphic. We hence embed $\SL_2((\frac{x,y}{\Z}))$ into $\SL_2((\frac{-1,-1}{\R}))$ and re-use the same technique. 

Note that in \cite{eiskiefvgel}, the authors showed that the exceptional components of type (3) can be reduced to the following three matrix rings over quaternion algebras:
\begin{itemize}
\item $M_2((\frac{-1,-1}{\Q}))$,
\item $M_2((\frac{-1,-3}{\Q}))$,
\item $M_2((\frac{-2,-5}{\Q}))$.
\end{itemize}
The surprising fact is that the three quaternion algebras appearing in those components have a norm Euclidean maximal order (see \cite[Theorem 2.1]{2012Fitzgerald}). This makes it possible to describe generators of $\SL_2(\O)$ with $\O$ a maximal order in these algebras by imitating the classical process, based on the Euclidean algorithm, in which generators for $\SL_2(\Z)$ are computed. For details see \cite[Proposition 4.1]{eiskiefvgel}. So for the pure purpose of units in group rings, there is a way of computing generators for the unit group of a maximal order in components of type (3) and the details may be found in \cite{eiskiefvgel}. 
Nevertheless this paper is still interesting, because the algorithm presented in this paper works for the unit group of orders of a $2$-by-$2$ matrix ring over some general totally definite quaternion algebra, i.e. that does not necessarily contain a norm Euclidean maximal order. Moreover, even in the context of units in group rings, one is often interested in the unit group of an order in a congruence subgroup of a matrix ring, and not the whole matrix ring. In the latter case, the classical computation, involving the Euclidean algorithm, is not working any more, as it requires the full matrix ring. Hence the interest of the algorithm presented in this paper. 

The outline of the paper is as follows. In section 2, we give the necessary background on hyperbolic spaces, Poincar\'e's Method, Quaternion algebras and Clifford algebras. In Section 3, we generalize the necessary results from \cite{algebrapaper} to higher dimensions. Finally in Section 4, we give the new algorithm. To stay consistent with \cite{algebrapaper}, we also work in this paper with the upper half-space and the ball model of hyperbolic space. In Section 5, we show a concrete method for computing generators of a subgroup of finite index in $\SL_2((\frac{-1,-1}{\Z}))$. This method is generalized in Section 6 to subgroups of finite index of $\SL_2((\frac{x,y}{\Z}))$, for $x$ and $y$ some general negative integers. To finish the paper, we give two examples in Section 7.

\section{Preliminaries}

In this section we recall necessary information about hyperbolic space, its orientation preserving isometries and Poincar\'e's Method. We further show how these concepts can be generalized to Clifford matrices. 

\subsection{Hyperbolic Space and M\"obius Transformations}
Standard references on hyperbolic geometry are \cite{beardon, bridson,
elstrodt,ratcliffe}. Let $\Hy^n$ (respectively $\BQ^n$) denote the upper half-space (respectively the ball) model of hyperbolic $n$-space.  Denote by $\mbox{Iso} (\Hy^{3})$ the group of isometries of $\Hy^{3}$. The group of
orientation preserving isometries is denoted by $\mbox{Iso}^{+}(\Hy^{3})$.

Let us first briefly consider the special case of dimension $2$ and $3$. Here $\Hy^2= \C= \lbrace x + yi \mid x \in \R, y \in \R^+ \rbrace$ and $\Hy^{3}$ can be identified with the subset of the classical real quaternion algebra $\h(\R)=(\frac{-1,-1}{\R})$
by identifying  $\Hy^{3}$ with the subset  $\{z+rj
\in \h(\R) \mid z \in \C, r \in \R^{+}\}\subseteq \h(\R)$. The
ball models $\BQ^2$ and $\BQ^{3}$ may be identified in the same way  with $\lbrace x+yi \in \C \mid x^2 + y^2 <1\rbrace$ and  
$\{z+rj\in \C +\R j \mid  |z|^2+r^2<1\}\subseteq
\h(\R)$, respectively.  
It is easy to see that $\PSL_2(\R)$ and $\PSL_2(\C)$ act on $\R$ and $\C$ respectively by M\"obius transformations, i.e. for $z \in \R$ or $z \in \C$
$$\begin{pmatrix}
a & b\\
c & d
\end{pmatrix}z =\frac{az+b}{cz+d} \in  \R \textrm{ or } \C \textrm{ respectively}.$$

\begin{remark}
By abusing the notation, we denote by $M = \begin{pmatrix} a & b \\ c & d \end{pmatrix}$ an element of $\SL_2(\C)$ as well as its projection to $\PSL_2(\C)$.
\end{remark}

By the so-called Poincar\'e extension, this action can be extended to $\Hy^2$ and $\Hy^2$ respectively. 
It is then well known (see for instance \cite{beardon}) that $\mbox{Iso}^{+}(\Hy^{2})$ is isomorphic to
$\PSL_2(\R)$ and $\mbox{Iso}^{+}(\Hy^{3})$ is isomorphic to
$\PSL_2(\C)$. The action is again by M\"obius transformations. More concretely, the action of $\PSL_2(\R)$ (or $\PSL_2(\C)$)  on  $\Hy^{2}$ (or $\Hy^{3}$) is given
by
\begin{eqnarray*}
\begin{pmatrix}
a & b\\
c & d
\end{pmatrix}z = (az+b)(cz+d)^{-1},
\end{eqnarray*}
where $z \in \Hy^2$ (or $\Hy^3$) and $(az+b)(cz+d)^{-1}$  is evaluated in $\C$ (or in the algebra $\h(\R)$). Explicitly, if $M = \begin{pmatrix}
a & b\\
c & d
\end{pmatrix}$ then 
\begin{eqnarray}
M (x+yi)=\frac{(ax+b)(cx+d)+a
cy^2}{(cx+d)^{2}+c^{2}y^{2}} + \frac{y}{(cz+d)^{2}+c^{2}y^{2}}i, \label{formule1} \\
M (z+rj)=\frac{(az+b)(\overline{c}\overline{z}+\overline{d})+a
\overline{c}r^{2}}{|cz+d|^{2}+|c|^{2}r^{2}} + \frac{r}{|cz+d|^{2}+|c|^{2}r^{2}}j. \label{formule2}
\end{eqnarray}
This action may be extended to $\widehat{\Hy}^2=\Hy^2 \cup \partial \Hy^2 \cup \lbrace \infty \rbrace$ (or $\widehat{\Hy}^3=\Hy^3 \cup \partial \Hy^3 \cup \lbrace \infty \rbrace$).

\begin{remark}\label{abuse}
Note that throughout the article we write $a=a(M)$, $b=b(M)$, $c=c(M)$ and $d=d(M)$, for the entries of $M =\begin{pmatrix}
a & b\\
c & d
\end{pmatrix}$, when it is necessary to stress the dependence of the entries on the matrix $M$.
\end{remark}

The orientation preserving isometries of the ball model may be also explicitly described in dimensions $2$ and $3$. We will do this for dimension $3$, dimension $2$ being done in a similar fashion. 
Let $a=a_0+a_1i+a_2j+a_3k \in \h(\R)$. We define the following automorphism and anti-automorphisms:
\begin{itemize}
\item $a'=a_0-a_1i-a_2j+a_3k$,
\item $a^{*}=a_0+a_1i+a_2j-a_3k$,
\item $\overline{a}=a_0-a_1i-a_2j-a_3k$. 
\end{itemize}
Let 
\begin{equation*}
\SB_{2} (\h(\R) )=\left\{
\left(
\begin{array}{cc}
a & b \\ c & d
\end{array}
\right) \in M_2(\h(\R)  ) \mid  d=a^{\prime},\;  b=c^{\prime},\;  a\overline{a}-c\overline{c}=1,\; a\overline{c} \in \C +\R j\right\}.
\end{equation*}
The following proposition describes the link between the upper-half space and the ball model.

\begin{proposition}\label{propels}\cite[Proposition I.2.3]{elstrodt}
\begin{enumerate}[(i)]
\item For $P \in \Hy^3$, the quaternion $-jP+1$ is invertible in $\h(\R)$ and the  map $\eta_0 :
\Hy^3\longrightarrow \BQ^3$, given by
$ \eta_0(P)=(P-j)(-jP+1)^{-1}$, is an isometry. More precisely $\eta_0=\mu\pi$, where $\pi$ is the reflection in the boundary of $\Hy^3$ and $\mu$ is the reflection in the Euclidean sphere with centre $j$ and radius $\sqrt{2}$.
\item Let $\Lambda=\frac{1}{\sqrt{2}}\left(
\begin{array}{ll}
1 & j \\ j & 1
\end{array}
\right) \in M_{2} (\h(\R) )$. The map
 $\Psi : \SL_2(\C) \rightarrow SB_{2}(\h(\R) )$ given by  $\Psi (M ) =
\overline{\Lambda}M \Lambda $
is a group   isomorphism.
\item For $u \in \BQ^3$ and $f=\begin{pmatrix} a & c' \\ c & a' \end{pmatrix} \in SB_2(\h(\R))$ the quaternion $cu+a'$ is invertible in $\h(\R)$ and the transformations $f:\BQ^3 \longrightarrow \BQ^3$, defined by $f(u)=(au+c')(cu+a')^{-1}$ are isometries of $\BQ^3$ and define an action of $SB_2(\h(\R))$ on $\BQ^3$. Again this action may be extended to the closure of $\BQ^3$, which we denote by $\overline{\BQ^3}$.
\item The group $\mbox{Iso}^{+}(\BQ^3 )$ is isomorphic to $SB_2(\h(\R))/ \lbrace 1,-1 \rbrace$.
\item The map $\eta_0$ is equivariant with respect to $\Psi$, that is $\eta_{0} (MP)=\psi (M) \eta_{0}(P)$, for $P\in \Hy^{3}$ and $M\in \SL_2(\C)$ .
\end{enumerate}
\end{proposition}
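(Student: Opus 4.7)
The plan is to establish each of the five parts by direct quaternionic computation, with parts (iii) and (iv) following essentially for free from (i), (ii) and (v). The structural observation that drives the whole argument is that $\overline{\Lambda}\Lambda = I$, so that $\Psi$ is conjugation by $\Lambda$ and is automatically a group homomorphism; this reduces (ii) to checking that the image lies inside $\SB_2(\h(\R))$ and that the map is bijective. I would therefore verify this identity at the very start by multiplying out the two $2\times 2$ matrices and using $j^2 = -1$.

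For (i), I would write $P = z + rj$ with $z \in \C$ and $r > 0$ and compute $-jP + 1 = (1+r) - jz$; expanding $jz$ in the basis $\{1,i,j,k\}$ gives quaternion norm $(1+r)^2 + |z|^2 > 0$, so $-jP+1$ is invertible in $\h(\R)$. To identify $\eta_0$ with $\mu\pi$, I would describe $\pi$ as the reflection in the boundary $\partial \Hy^3$ of the upper half space and $\mu$ as inversion in the Euclidean sphere of centre $j$ and radius $\sqrt{2}$, then check that the composite agrees with the quaternionic fractional transformation $(P-j)(-jP+1)^{-1}$ on a few reference points (say $j$, $\infty$, and a boundary point) to pin down the formula, after which equality everywhere follows from the fact that a Möbius transformation of $\widehat{\R}^3$ is determined by its values on a generic tetrahedron. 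Since each individual reflection is a hyperbolic isometry, $\eta_0$ is an isometry $\Hy^3 \to \BQ^3$.

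For (ii) I would multiply out $\overline{\Lambda} M \Lambda$ with $M=\begin{pmatrix}a & b \\ c & d\end{pmatrix}$ and read off the four defining relations of $\SB_2(\h(\R))$ directly from the resulting entries; the relations $d=a'$, $b=c'$, $a\overline{c}\in\C+\R j$ are pure bookkeeping with the involutions $\cdot'$ and $\overline{\cdot}$ on $\h(\R)$, while $a\overline{a}-c\overline{c}=1$ is a consequence of $\det M = 1$. Injectivity of $\Psi$ is immediate from conjugation, and surjectivity comes from writing the inverse map $N\mapsto \Lambda N \overline{\Lambda}$ and verifying that any $N \in \SB_2(\h(\R))$ is sent into $\SL_2(\C)$ by inverting the same calculation. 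Part (v) is then the heart of the matter: setting $MP = (az+b)(cz+d)^{-1}$ interpreted in $\h(\R)$ via the Poincaré extension~\eqref{formule2}, I would clear denominators in both $\eta_0(MP) = (MP - j)(-j \cdot MP + 1)^{-1}$ and $\Psi(M)\eta_0(P) = (A\eta_0(P)+C')(C\eta_0(P)+A')^{-1}$ (where $A,C$ denote the entries of $\Psi(M)$), and reduce the equivariance to a single quaternionic identity that follows from $\Lambda\overline{\Lambda}=I$ and the definition of $\Lambda$.

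Finally, (iii) is obtained by transporting the $\SL_2(\C)$-action on $\Hy^3$ along $\eta_0$ using (v): for $f = \Psi(M)\in \SB_2(\h(\R))$ and $u=\eta_0(P)\in\BQ^3$, the formula $f(u) = (au+c')(cu+a')^{-1}$ is exactly what the quaternionic fractional transformation becomes after substitution, and isometry is inherited from the $\SL_2(\C)$-action. Part (iv) follows by combining (ii) with the classical identification $\Iso^+(\Hy^3)\cong\PSL_2(\C)$ and noting that the kernel $\{\pm I\}$ of the action on $\Hy^3$ corresponds under $\Psi$ to $\{\pm 1\}\subset \SB_2(\h(\R))$. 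The main obstacle I anticipate is the explicit verification of the equivariance in (v): because $\h(\R)$ is non-commutative, one has to track the order of multiplications carefully and use the interaction between $z$, $\overline{z}$ and $j$ that already appears in formula~\eqref{formule2}; once set up, however, it collapses to a finite symbolic calculation rather than anything conceptually difficult.
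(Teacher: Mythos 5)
The paper does not actually prove this proposition: it is imported verbatim from Elstrodt--Grunewald--Mennicke \cite[Proposition I.2.3]{elstrodt}, so there is no in-paper argument to compare against. Your outline is the standard one, and its backbone is sound: $\overline{\Lambda}\Lambda=I$ does hold (the off-diagonal terms $j-j$ cancel and $1-j^2=2$ absorbs the $\tfrac12$), so $\Psi$ is conjugation and hence automatically an injective homomorphism, the membership of $\Psi(M)$ in $\SB_{2}(\h(\R))$ and the identity $|A|^2-|C|^2=\mathrm{Re}(ad-bc)=1$ come out of the matrix multiplication exactly as you say, and (iii)--(v) do follow by transporting the $\SL_2(\C)$-action along $\eta_0$ once (i) and (ii) are in place.

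The one step that would fail as written is your identification $\eta_0=\mu\pi$ in (i). First, checking agreement at ``$j$, $\infty$, and a boundary point'' is not enough: a M\"obius transformation of $\widehat{\R}^3$ is not determined by three values, and not even by four generic ones (inversion in the unique sphere through four generic points fixes all four), so you would need five points in general position. Second, and more seriously, this argument presupposes that $P\mapsto (P-j)(-jP+1)^{-1}$ is already known to be a M\"obius transformation, which at this point is exactly the kind of statement (fractional action of Vahlen-type matrices) that the proposition is meant to supply; invoking it here is circular. The repair is cheap: compute directly. Writing $P=z+rj$ one gets $-jP+1=(1+r)-jz$ with $(-jP+1)^{-1}=\big((1+r)+jz\big)/\big((1+r)^2+|z|^2\big)$, and using $zj=j\overline{z}$ (so $zjz=|z|^2j$) the product collapses to
\begin{equation*}
(P-j)(-jP+1)^{-1}=\frac{2z+\big(|z|^2+r^2-1\big)j}{|z|^2+(1+r)^2},
\end{equation*}
which is literally what $\mu(\pi(P))=j+2\big(\pi(P)-j\big)/|\pi(P)-j|^2$ expands to. With that substitution your plan goes through; the remaining computations in (ii) and (v) are, as you anticipate, order-of-multiplication bookkeeping in $\h(\R)$.
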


\subsection{Poincar\'e's Method}\label{prelimpoin}

Recall that a group $\Gamma$ is said to act properly discontinuously on a locally compact space $X$ if for every compact subset $K$ of $X$, $K \cap \gamma(K) \neq \emptyset$ for only finitely many $\gamma \in \Gamma$. If $X$ is a proper metric space, then a group $\Gamma \leq \textrm{Iso}(X)$ acts properly discontinuously on $X$ if and only if $\Gamma$ is a discrete subgroup of $\textrm{Iso}(X)$. For more details on this, see \cite[Theorem 5.3.5]{ratcliffe}. The following definitions are taken from \cite[Section 6.6]{ratcliffe}. A subset $F$ of a metric space $X$ is a fundamental set for a
group $\Gamma$ of isometries of X if and only if $F$ contains exactly one point from
each $\Gamma$-orbit in $X$. A subset $R$ of a metric space $X$ is a fundamental region for a group $\Gamma$ acting properly discontinuously by isometries on $X$ if and only if \begin{itemize}
\item $R$ is a an open set in $X$,
\item the members of $\lbrace \gamma(R) \mid \gamma \in \Gamma \rbrace$ are mutually disjoint and
\item $X = \cup_{\gamma \in \Gamma} \gamma(\overline{R})$.
\end{itemize}
A fundamental domain $\F$ is a connected fundamental region. If moreover $X$ is a space of constant curvature, we call $\F$ a convex fundamental polyhedron for $\Gamma$, if $\F$ is a convex polyhedron whose interior is a locally finite fundamental domain for $\Gamma$. Note that by \cite[Corollary 6.7.1]{ratcliffe}, the boundary of every convex locally finite fundamental domain is a null-set.

In this paper we work with the Dirichlet fundamental domain. We recall its construction and how it can be used to give a presentation for the considered groups, the so called Poincar\'e method (for details see for example \cite{beardon} and \cite{ratcliffe}). 
Let $\Gamma$ be a group acting properly discontinuously on $\Hy^n$, $\gamma \in \Gamma$ and $P \in \Hy^n$ a point which is not fixed by any $\gamma \in \Gamma \setminus \lbrace 1 \rbrace$. Then let
\begin{equation}\label{defbis}
D_{\gamma}(P)=\{z \in \Hy^n \mid \rho(P,z) < \rho (z,\gamma (P))\},
\end{equation}
the half-space containing $P$, where $\rho$ denotes the hyperbolic distance in $\Hy^n$. 
The boundary of $D_{\gamma}(P)$ is the bisector of $P$ and $\gamma(P)$, that is the set $\{z\in \Hy^n \mid \rho(P,z)= \rho (z,\gamma (P))\}$. We denote it by $\Sigma_{\gamma}(P)$. If $P \in \Hy^n$ is a point which is not fixed by any non-trivial element of $\Gamma$, then
\begin{equation*}
\F = \cap _{1 \neq \gamma \in \Gamma} D_{\gamma}(P)
\end{equation*}
is known as a Dirichlet fundamental polyhedron of $\Gamma$ with centre $P$. By \cite[Theorem 6.6.13]{ratcliffe}, the Dirichlet fundamental domain is a locally finite fundamental domain. Moreover $\F$ is convex as $\F$ is the intersection of open half-spaces of $\Hy^n$. Hence by \cite[Theorem 6.7.1]{ratcliffe}, the closure of the Dirichlet fundamental domain is a fundamental polyhedron. If $\Gamma_P$, the stabilizer of $P$ in $\Gamma$, is not trivial and if $\overline{\F_P}$ is a fundamental polyhedron for the group $\Gamma_P$, then the closure of
\begin{equation}\label{fundwithstab}
\F = \F_P \cap ( \cap_{\gamma \in \Gamma \setminus \Gamma_P} D_{\gamma}(P))
\end{equation}
is a fundamental polyhedron of $\Gamma$. A proof of this can be found for example in \cite[Proof of Proposition 3.2]{jesetall}.
Once a fundamental polyhedron for $\Gamma$ is constructed, Poincar\'e's Method gives a way of describing generators for the group $\Gamma$.

\begin{theorem}\label{Poincare}\cite[Theorem 6.8.3]{ratcliffe}
Let $\F$ be a convex fundamental polyhedron for a group $\Gamma$ acting properly discontinuously on a metric space $X$ of constant curvature. 
Then $\Gamma$ is generated by the set
\begin{equation*}
\lbrace \gamma \in \Gamma \mid \F \cap \gamma(\F) \textrm{ is a side of } \F \rbrace.
\end{equation*}
The elements $\gamma$ of the generating set are called side-pairing transformations.
\end{theorem}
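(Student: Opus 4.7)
The plan is to show that the subgroup $\Gamma' = \langle S\rangle$ generated by the proposed side-pairing set $S = \{\gamma\in\Gamma\mid \F\cap\gamma(\F)\text{ is a side of }\F\}$ coincides with $\Gamma$. The standard strategy is to examine the tiling
\[
U \;=\; \bigcup_{\gamma\in\Gamma'}\gamma(\F)\;\subseteq\;X
\]
and prove that $U=X$ using connectedness of $X$; once this is established, the defining property of a fundamental domain will force every element of $\Gamma$ to lie in $\Gamma'$.

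First I would argue closedness of $U$: discontinuity of the $\Gamma$-action implies that the family $\{\gamma(\F):\gamma\in\Gamma\}$ is locally finite in $X$, and a locally finite union of closed sets is closed. The substantive step is openness. Fix $p\in U$ and choose $\gamma_0\in\Gamma'$ with $p\in\gamma_0(\F)$; I would stratify by the codimension of the face of $\gamma_0(\F)$ containing $p$. If $p$ lies in the interior of $\gamma_0(\F)$ there is nothing to prove. If $p$ lies in the relative interior of a side $\sigma$ of $\gamma_0(\F)$, then $\gamma_0^{-1}(\sigma)$ is a side of $\F$ and, by the very definition of $S$, there exists $s\in S$ with $\F\cap s(\F) = \gamma_0^{-1}(\sigma)$; hence $\gamma_0(\F)\cup\gamma_0 s(\F)\subseteq U$ covers a neighborhood of $p$ since $\gamma_0 s\in\Gamma'$. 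The remaining case, $p$ on a face of codimension $\geq 2$, is handled by a cycle argument around the offending ridge: local finiteness gives only finitely many translates meeting a small ball at $p$, and convexity together with the fundamental-domain property forces these translates to arrange as a cyclic sequence $\gamma_0(\F), \gamma_1(\F), \ldots, \gamma_k(\F)$ in which consecutive tiles share a side, so each transition is by an element of $S$ and inductively $\gamma_i\in\Gamma'$ for every $i$.

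Once $U=X$ is in hand, I conclude as follows. Given $\gamma\in\Gamma$, pick any $p\in\F^{\circ}$; then $\gamma(p)\in U$, so $\gamma(p)\in\gamma'(\F)$ for some $\gamma'\in\Gamma'$. The point $(\gamma')^{-1}\gamma(p)$ lies in $\F$ and also in $(\gamma')^{-1}\gamma(\F^{\circ})$, and disjointness of the interiors of distinct $\Gamma$-translates of $\F$ forces $(\gamma')^{-1}\gamma = 1$, whence $\gamma=\gamma'\in\Gamma'$. The main obstacle is unquestionably the ridge-cycle step: one must verify that the finitely many tiles meeting a codimension-$\geq 2$ face really do thread together into a closed cyclic chain whose consecutive links are side-pairings. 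Convexity of $\F$ is essential here, since it ensures that near such a face the local picture is a finite union of convex dihedral sectors that glue into a complete cycle; without convexity one could not exclude a tile configuration in which two adjacent sectors fail to be related by an element of $S$.
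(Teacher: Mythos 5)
The paper offers no proof of this statement: it is quoted directly from Ratcliffe \cite[Theorem 6.8.3]{ratcliffe}. So the only meaningful comparison is with the standard textbook argument, and that is essentially what you are reconstructing: set $\Gamma'=\langle S\rangle$, show $U=\bigcup_{\gamma\in\Gamma'}\gamma(\F)$ is all of $X$ by an open--closed--connectedness argument, and deduce $\Gamma=\Gamma'$ from the disjointness of the interiors of distinct translates. Your closedness step (local finiteness) and your concluding step are fine, with the small caveat that the point $(\gamma')^{-1}\gamma(p)$ may land on $\partial\F$, so you should pass to nearby points using $\F=\overline{\F^{\circ}}$ before invoking disjointness of interiors. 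The codimension-one step also quietly uses exactness of the fundamental polyhedron, i.e.\ that every side of $\F$ is of the form $\F\cap s(\F)$ for a single $s\in\Gamma$; this holds for Dirichlet polyhedra, which is all the paper needs, but it is not automatic for an arbitrary convex fundamental polyhedron, so ``by the very definition of $S$'' is doing more work than it appears to.

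The genuine weak point is the one you single out yourself, and your proposed resolution of it is not correct as stated. The tiles meeting a face of codimension $\geq 2$ arrange into a \emph{cyclic} sequence of side-adjacent translates only when the codimension is exactly $2$ (a ridge). Around a face of codimension $\geq 3$ --- and in this paper $X=\Hy^{5}$, so faces up to codimension $5$ occur --- the translates containing $p$ form a more complicated complex with no cyclic order, and ``convex dihedral sectors gluing into a complete cycle'' is not the local picture. What the argument actually requires is weaker: any two tiles containing $p$ can be joined by a finite \emph{chain} of tiles containing $p$ in which consecutive members share a side. The standard way to get this is to observe that a small ball about $p$ minus the union of the codimension-$\geq 2$ faces of the finitely many tiles meeting it remains connected when $\dim X\geq 2$, so a generic path between interior points of the two tiles crosses only codimension-one walls, each crossing being a side-pairing; alternatively one inducts on codimension. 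With ``cycle'' replaced by ``chain'' and that connectedness claim justified, your argument closes up and agrees with Ratcliffe's proof.
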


\begin{remark} Note that Theorem~\ref{Poincare} is not Poincar\'e's Polyhedral Theorem in its full generality, but rather a corollary of it. 
\end{remark}

\subsection{The group \texorpdfstring{$\PSL_2((\frac{x,y}{\Z}))$}{LG}}

In this subsection we consider quaternion algebras over the field $\Q$. A $\Q$-algebra is said to be a quaternion algebra over $\Q$ if there exist $x,y \in \Q^*$ and a $\Q$-basis $\lbrace 1,i,j,k \rbrace$, such that 
$$i^2=x, j^2=y \textrm{ and } ij=-ji=k.$$
This algebra is denoted $(\frac{x,y}{\Q})$. Clearly $(\frac{-1,-1}{\Q})$ is the classical Hamilton quaternion algebra over $\Q$. We are interested in quaternion algebras that are skew fields, i.e. $a_0^2-xa_1^2-ya_2^2-xya_3^2=0$ for some $(a_0,a_1,a_2,a_3) \in \Q^4$ if and only if $a_0=a_1=a_2=a_3=0$. As described in the Introduction, we are interested in the elements of $M_2((\frac{x,y}{\Z}))$ of reduced norm $1$ over $\Q$. We recall the notion of reduced norm. Let $A$ be a finite dimensional central simple algebra over $K$. Let $E$ be a splitting field of $A$, i.e. $E \otimes_K A \cong M_{n}(E)$ is a full matrix ring over $E$. The \emph{reduced norm} of $a \in A$ is defined as \[\operatorname{RNr}_{A/K}(a) = \det(1 \otimes a). \] 
Note that $\operatorname{RNr}_{A/K}(\cdot)$ is a multiplicative map, $\operatorname{RNr}_{A/K}(A) \subseteq K$ and $\operatorname{RNr}_{A/K}(a)$ do only depend on $K$ and $a \in A$ (and not on the chosen splitting field $E$ and isomorphism $E \otimes_K A \cong M_{n}(E)$), see \cite[page 51]{ericangelbook}. For a subring $R$ of $A$, put
 \[ \SL_1(R) = \{\ a \in \U(R)\ |\ \operatorname{RNr}_{A/K}(a) = 1\ \}, \]
 which is a (multiplicative) group. Here $A = M_2((\frac{x,y}{\Q}))$ and $R= M_2((\frac{x,y}{\Z}))$ and we write $\SL_1(A)= \SL_2((\frac{x,y}{\Q}))$ and $\SL_1((\frac{x,y}{\Z})) = \SL_2((\frac{x,y}{\Z}))$.

As the reduced norm is not handy to work with, we recall the notion of Dieudonn\'e determinant. 
Let $\begin{pmatrix} a & b \\ c & d \end{pmatrix}$ be a matrix in $M_2((\frac{x,y}{\Q}))$. We first define the pseudo-determinant $\sigma$ as follows:
\begin{equation}\label{defsigma}
\begin{aligned}
\sigma = \begin{cases}
cac^{-1}d-cb & \textrm{ when } c \neq 0, \\
bdb^{-1}a & \textrm{ when } c=0,b \neq 0,\\
(d-a)a(d-a)^{-1}d & \textrm{ when } b=c=0, a \neq d,\\
a\overline{a} & \textrm{ when } b=c=0, a = d.
\end{cases}\end{aligned}\end{equation}
Based on this quantity $\sigma$, the Dieudonn\'e determinant of this matrix is defined as
$$ \Delta = \vert \sigma \vert = \sqrt{\vert a \vert^2\vert d \vert^2 + \vert b \vert^2\vert c \vert^2 - 2Re\left[a\overline{c}d\overline{b}\right]}.$$
Note that $\Delta$ is positive and real. Moreover it may be proven (see \cite[Theorem~1, page~146]{draxl}) that $\Delta^2$ equals the reduced norm. For more information on reduced norms and the Dieudonn\'e determinant, we refer an interested reader to \cite{draxl}. 
As a consequence we obtain the following lemma. 

\begin{lemma}
A matrix $M \in M_2(\h(\R))$ is invertible if and only if $\Delta \neq 0$. Furthermore the inverse is given by 
\begin{equation}\label{inversdeltamatrix}
\begin{aligned}
M^{-1} = \begin{pmatrix} a^{-1} & -\sigma^{-1}b \\ 0 & d^{-1} \end{pmatrix}, & \textrm{ when } c=0,\\
M^{-1} = \begin{pmatrix} c^{-1}d\sigma^{-1}c & -a^{-1}b\sigma^{-1}cac^{-1} \\ -\sigma^{-1}c & \sigma^{-1}cac^{-1} \end{pmatrix}, & \textrm{ when } c\neq 0.
\end{aligned}\end{equation}  
\end{lemma}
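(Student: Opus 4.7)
The plan is two-fold: first derive the invertibility criterion from material already collected in the paper, and then verify each of the two explicit formulas for $M^{-1}$ by a direct multiplication check.

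For the first part, I would chain three equivalences. Because $\Delta = |\sigma|$ and $\sigma$ lies in the skew field $\h(\R)$, $\Delta \neq 0$ is equivalent to $\sigma \neq 0$. The paper has recorded, following Aslaksen and Lewis, that $\Delta^4$ equals the reduced norm of $M$, which by definition is the determinant of the image of $M$ under the injective algebra homomorphism $M_2(\h(\R)) \hookrightarrow M_8(\Q)$. Since such a homomorphism preserves invertibility, $M$ is invertible over $M_2(\h(\R))$ precisely when this determinant (equivalently the reduced norm, equivalently $\Delta^4$) is non-zero. A preparatory observation I would record is that $\sigma \neq 0$ forces each quaternion that is inverted in (\ref{defsigma}) and in (\ref{inversdeltamatrix}) to be itself non-zero; in particular, when $c = 0$, inspection of each subcase of (\ref{defsigma}) shows that $a$ and $d$ must be units, so the proposed formula for $M^{-1}$ is well defined.

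For the second part, I would simply compute $M \cdot M^{-1}$ entry by entry. In the case $c = 0$, the bottom row is immediate from the upper-triangular shape, the diagonal reduces to $aa^{-1}$ and $dd^{-1}$, and the top-right entry equals $a(-\sigma^{-1}b) + bd^{-1}$, which vanishes exactly because the definition $\sigma = bdb^{-1}a$ rearranges to $a\sigma^{-1}b = bd^{-1}$ (the subcases $b = 0$ being trivial). In the case $c \neq 0$, the driving identity is $cac^{-1}d - cb = \sigma$; factoring $c$ on the left and $c^{-1}$ on the right makes each of the four entries of $M \cdot M^{-1}$ telescope down to $0$ or $1$.

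The main obstacle is purely non-commutative bookkeeping: strings like $cac^{-1}$ and $a\sigma^{-1}$ cannot be commuted or cancelled in the usual way, and every rearrangement must be justified by hand in the order it appears. A smaller but genuine subtlety is the degenerate subcase $a = 0$ with $c \neq 0$, where the entry $-a^{-1}b\sigma^{-1}cac^{-1}$ of the stated formula is only formally defined; I would either dispatch it separately (the inverse of an antidiagonal matrix being elementary, since $\sigma = -cb$ there) or interpret it through the cancellation $a^{-1}(\cdots)a$ that the general argument implicitly performs.
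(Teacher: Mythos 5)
Your proposal is correct, but it is worth saying plainly that it does more than the paper does: the paper offers no proof of this lemma at all. It records, with references to Aslaksen and Lewis, that $\Delta^4$ equals the reduced norm and then presents the lemma as an immediate consequence, leaving the explicit formulas (\ref{inversdeltamatrix}) entirely unverified. Your first part therefore coincides with the paper's implicit argument (invertible $\Leftrightarrow$ nonvanishing reduced norm $\Leftrightarrow$ $\Delta\neq 0$), while your second part supplies the entry-by-entry check that the paper omits; the driving identities $\sigma=c(ac^{-1}d-b)$ and $a\sigma^{-1}b=bd^{-1}$ that you isolate are exactly the right ones, and your preparatory observation that $\sigma\neq 0$ forces every quaternion inverted in (\ref{defsigma}) and (\ref{inversdeltamatrix}) to be nonzero is precisely what makes the formulas well defined. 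Three small points would tighten the write-up. First, ``the homomorphism preserves invertibility'' gives only one implication; the converse (nonzero determinant of the real matrix implies $M$ invertible in $M_2(\h(\R))$) should either be argued via $M_2(\h(\R))=\mathrm{End}_{\h(\R)}(\h(\R)^2)$ or, more economically, be delegated to your second part, which exhibits the inverse explicitly whenever $\Delta\neq 0$. Second, checking $MM^{-1}=I$ alone suffices only because $M_2(\h(\R))$ is simple Artinian and hence Dedekind-finite; either say so or also compute $M^{-1}M$. Third, you are right that the printed formula for $c\neq 0$ silently assumes $a\neq 0$: when $a=0$ one has $\sigma=-cb$ and the top-right entry of the inverse is $c^{-1}$, which the displayed expression only yields after the cancellation you describe, so treating that subcase separately is not pedantry but a genuine repair of the statement.
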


Consider a matrix $M \in M_2((\frac{x,y}{\Z}))$. It is easy to see that if $M$ is a unit in $M_2((\frac{x,y}{\Z}))$, then $\Delta = 1$. On the other hand, if $\Delta=1$, then $\sigma^{-1}=\overline{\sigma}$. With some computations, one may prove that in that case all the four entries of $M^{-1}$ are in $(\frac{x,y}{\Z})$. We hence obtain the following lemma.
\begin{lemma}\label{unitinMHZ}
A matrix $M \in M_2((\frac{x,y}{\Z}))$ is a unit in $M_2((\frac{x,y}{\Z}))$ if and only if $\Delta=1$.
\end{lemma}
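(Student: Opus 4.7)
The plan is to prove the two implications separately, leaning on the previous lemma for the explicit inverse formula and on the structural identity $\Delta^{2}=\sigma\bar{\sigma}$ that is implicit in the defining formula of the Dieudonné determinant.

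For the forward direction, my strategy is to exploit the multiplicativity of the reduced norm. Suppose $M$ is a unit in $M_{2}((\frac{x,y}{\Z}))$ with inverse $N$ in the same ring. Because the reduced norm equals $\Delta^{4}$ and is multiplicative, I obtain $\Delta(M)^{4}\,\Delta(N)^{4}=\Delta(I)^{4}=1$. Since $x,y<0$ the algebra $(\frac{x,y}{\Q})$ is totally definite, so $q\bar{q}\in\Z_{\geq 0}$ for every $q\in(\frac{x,y}{\Z})$. Expanding the defining formula of $\Delta$ term by term then shows that $\Delta(M)^{4}$ and $\Delta(N)^{4}$ are both non-negative integers whose product is $1$, hence each equals $1$, and consequently $\Delta(M)=1$.

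For the reverse direction, I plan to substitute $\sigma^{-1}=\bar{\sigma}$ (which follows from $\Delta=|\sigma|=1$) into the explicit inverse formula \eqref{inversdeltamatrix} and then collapse every apparently ``illegal'' inverse by pairing it with an adjacent conjugate. In the case $c\neq 0$, expanding $\bar{\sigma}=\bar{d}\,\bar{c}^{-1}\bar{a}\bar{c}-\bar{b}\bar{c}$ and using that $\bar{c}c=|c|^{2}$ is a central rational number, the entry in position $(2,1)$ becomes
\begin{equation*}
-\bar{\sigma}c \;=\; -\bar{d}\,\bar{c}^{-1}\bar{a}\bar{c}c + \bar{b}\bar{c}c \;=\; -\bar{d}\,c\,\bar{a} + |c|^{2}\bar{b},
\end{equation*}
since $\bar{c}^{-1}|c|^{2}=c$. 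The same cancellation pattern---pairing $\bar{c}^{-1}$ with $\bar{c}c$, pairing $a^{-1}$ with $a\bar{a}$, and then sliding the central reduced norm through the product---reduces each of the remaining entries $c^{-1}d\bar{\sigma}c$, $-a^{-1}b\bar{\sigma}cac^{-1}$, and $\bar{\sigma}cac^{-1}$ to a polynomial in $a,b,c,d$ and their conjugates, and hence to an element of $(\frac{x,y}{\Z})$.

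The case $c=0$ splits into the three subcases listed in \eqref{defsigma}. In each subcase a short computation shows that $\sigma\bar{\sigma}=|a|^{2}|d|^{2}$, so $\Delta=1$ together with $|a|^{2},|d|^{2}\in\Z_{\geq 0}$ forces $|a|^{2}=|d|^{2}=1$. Consequently $a^{-1}=\bar{a}$ and $d^{-1}=\bar{d}$ already lie in $(\frac{x,y}{\Z})$, and the only off-diagonal entry $-\sigma^{-1}b$ collapses to an integral element by the same kind of pairing as above. The main obstacle is exactly this bookkeeping: the formula for $M^{-1}$ involves the individual inverses $a^{-1}$ and $c^{-1}$, which are in general not integral, and the real content is that in every one of the four entries these inverses occur adjacent to enough conjugates to recombine into the central norms $|a|^{2},|c|^{2}\in\Z$. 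Once that cancellation pattern is recognised, each entry is a one-line identity.
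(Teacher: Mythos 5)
Your proof is correct and takes essentially the same route as the paper: the paper's (very terse) argument is precisely that a unit forces $\Delta=1$ by multiplicativity and integrality, and that conversely $\Delta=1$ gives $\sigma^{-1}=\overline{\sigma}$, after which ``some computations'' show the four entries of $M^{-1}$ in (\ref{inversdeltamatrix}) lie in $(\frac{x,y}{\Z})$. You have simply supplied those omitted computations explicitly, and the cancellation pattern you describe (pairing each $a^{-1}$ or $c^{-1}$ with an adjacent conjugate to produce the central integers $|a|^{2},|c|^{2}$) checks out in every entry.
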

Similarly to the classical case, we define the groups
$$\SL_2\left(\left(\frac{x,y}{\R}\right)\right) = \left\lbrace \begin{pmatrix} a & b \\ c & d \end{pmatrix} \mid a,b,c,d \in \left(\frac{x,y}{\R}\right), \Delta = 1 \right\rbrace,$$
$$\SL_2\left(\left(\frac{x,y}{\Z}\right)\right) = \left\lbrace \begin{pmatrix} a & b \\ c & d \end{pmatrix} \mid a,b,c,d \in \left(\frac{x,y}{\Z}\right), \Delta = 1 \right\rbrace.$$
By passing to the projective group 
$$\PSL_2\left(\left(\frac{x,y}{\R}\right)\right) = \PSL_2\left(\left(\frac{x,y}{\R}\right)\right)/\lbrace \pm I \rbrace,$$
we get the following isomorphism (see \cite{kellerhals03} for details)
$$\PSL_2\left(\left(\frac{x,y}{\R}\right)\right) \cong \Isom^{+}(\Hy^5).$$

\subsection{Clifford M\"obius Transformations}\label{Cliffordsection}

The Clifford algebra $\Cliff_{n}(\R)$ is the associative algebra over the real numbers generated by $n-1$ elements $i_1, \ldots, i_{n-1}$ subject to the relations $i_hi_k = -i_ki_h$ for $h \neq k$ and $i_h^2=-1$ for $1 \leq h \leq n-1$ and no other relations. The Clifford vectors are the elements of $\Cliff_n(\R)$ of the special form $\alpha=\alpha_0+\alpha_1i_1+\alpha_2i_2+ \ldots \alpha_{n-1}i_{n-1}$. The space of all the Clifford vectors is a vector subspace of $\Cliff_n(\R)$ and we denote it by $\V^n(\R)$. The main conjugation consists of replacing $i_h$ by $-i_h$. It is an automorphism of $\Cliff_n(\R)$. We denote the conjugate of $\alpha$ by $\alpha'$. The conjugation denoted by $\alpha \mapsto \alpha^{*}$ consists in reversing the order of the factors in each product $i_{h_1}\ldots i_{h_m}$. This defines an anti-automorphism on $\Cliff_n(\R)$ in the sense that for $\alpha,\beta \in \Cliff_n(\R)$, $(\alpha \beta)^*=\beta^*\alpha^*$. Finally, combining both conjugations, we get another anti-automorphism denoted by $\alpha \mapsto \overline{\alpha}=\alpha'^{*}=\left(\alpha^{*}\right)'$. If $\alpha \in \V^n(\R)$, then $\alpha^* = \alpha $ and $\alpha' = \overline{\alpha}$. If $\alpha \in \Cliff_n(\R)$, write $\alpha = \sum \alpha_I I$, where $I=i_{j_1} \ldots i_{j_r}$ with $i_{j_i} \in \lbrace i_1, \ldots , i_{n-1} \rbrace$. Define $\vert \alpha \vert^2=\sum \alpha_I^2$. If $\alpha \in \V^n(\R)$, then $\alpha\overline{\alpha}=\vert \alpha \vert^2$. This also leads to the fact that every non-zero vector $\alpha$ is invertible and its inverse is given by $\alpha^{-1}=\overline{\alpha}\vert \alpha \vert^{-2}$. In order to work with matrices we want to be able to multiply vectors and hence to work in a multiplicative group. That is why we define the Clifford group $\Gamma_n(\R)$ whose elements are all the possible products of invertible vectors. These products are of course invertible and therefore $\Gamma_n(\R)$ is a group. 

We consider matrices of the form $M=\begin{pmatrix} \alpha & \beta \\ \gamma & \delta \end{pmatrix}$ with $\alpha,\beta,\gamma,\delta \in \Gamma_n(\R) \cup \lbrace 0 \rbrace$ and we would like $M$ to define a bijective mapping from $\overline{\V}^n(\R)$ to $\overline{\V}^n(\R)$, where $\overline{\V}^n(\R) = \V^n(\R) \cup \lbrace \infty \rbrace$.

\begin{definition}\cite[Definition 2.1]{1985ahlfors}\label{defcliffmoeb}
\begin{eqnarray*}
\GL(\Gamma_n(\R)) & = & \left \lbrace \begin{pmatrix} \alpha & \beta \\ \gamma & \delta \end{pmatrix} \mid \alpha,\beta,\gamma,\delta \in \Gamma_n(\R) \cup \lbrace 0 \rbrace; \alpha\delta^*-\beta\gamma^* \in \R\setminus \lbrace 0\rbrace; \right.\\
& & \left. \alpha\beta^*, \gamma\delta^*, \gamma^*\alpha, \delta^*\beta \in \V^n(\R) \right \rbrace,\\
\SL_+(\Gamma_n(\R)) & = & \left \lbrace \begin{pmatrix} \alpha & \beta \\ \gamma & \delta \end{pmatrix} \in \GL(\Gamma_n(\R))  \mid \alpha\delta^*-\beta\gamma^* =  1 \right \rbrace.
\end{eqnarray*}
\end{definition}

\begin{remark}\label{rationalClifford}
Note that all the above definitions make sense if we replace the field $\R$ by $\Q$. Hence we get $\Cliff_n(\Q)$, $\V^n(\Q)$, $\Gamma_n(\Q)$ and $\SL_+(\Gamma_n(\Q))$. 
\end{remark}

In \cite{1985ahlfors}, it is shown that the sets from Definition~\ref{defcliffmoeb} are well defined multiplicative groups. In \cite{ElsGrunMen} a different definition of $\SL_+(\Gamma_n(\R))$ is given in Definition 3.1. It is then shown in \cite[Theorem 3.7]{ElsGrunMen} that both definitions are equivalent. The next theorem is taken from \cite{1985ahlfors}.

\begin{theorem}\cite[Theorem A]{1985ahlfors}
The matrix $M = \begin{pmatrix} \alpha & \beta \\ \gamma & \delta \end{pmatrix}$ with $\alpha,\beta,\gamma,\delta \in \Gamma_n(\R) \cup \lbrace 0 \rbrace$ induces bijective mappings $\overline{\V}^n \rightarrow \overline{\V}^n$  and $\overline{\V}^{n+1} \rightarrow \overline{\V}^{n+1} $ if and only if $M \in \GL(\Gamma_n(\R))$.
\end{theorem}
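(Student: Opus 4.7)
The plan is to prove both directions by analysing when the Möbius-type formula $M\cdot x = (\alpha x+\beta)(\gamma x+\delta)^{-1}$ makes sense on $\overline{\V}^n(\R)$ and lands back in $\overline{\V}^n(\R)$. First I would check well-definedness: since the entries lie in $\Gamma_n(\R)\cup\lbrace 0\rbrace$ and any non-zero Clifford-group element is invertible via $a^{-1}=\overline{a}/|a|^2$, I would argue that $\gamma x+\delta$ is either identically zero (which is ruled out by $\alpha\delta^{*}-\beta\gamma^{*}\neq 0$) or is invertible for all but at most one $x\in\V^n(\R)$, extending the map to $\infty$ and to the unique preimage of $\infty$ in the usual Möbius way.

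The key verification is that the image lies in $\V^n(\R)$. Vectors are characterised as the elements fixed by the anti-automorphism $*$. I would therefore compute
\[
\bigl((\alpha x+\beta)(\gamma x+\delta)^{-1}\bigr)^{*}=(x\gamma^{*}+\delta^{*})^{-1}(x\alpha^{*}+\beta^{*}),
\]
using $x^{*}=x$ for $x\in\V^n(\R)$, set it equal to $(\alpha x+\beta)(\gamma x+\delta)^{-1}$, and clear denominators by multiplying on the left by $x\gamma^{*}+\delta^{*}$ and on the right by $\gamma x+\delta$. Expanding and matching terms of degree $0$, $1$, and $2$ in $x$ yields precisely the identities $\gamma^{*}\alpha=\alpha^{*}\gamma$, $\delta^{*}\beta=\beta^{*}\delta$ and a mixed identity involving $\alpha\beta^{*}$ and $\gamma\delta^{*}$, which are exactly the vector-ness conditions $\alpha\beta^{*},\gamma\delta^{*},\gamma^{*}\alpha,\delta^{*}\beta\in\V^n(\R)$ appearing in Definition~\ref{defcliffmoeb}. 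This proves the "only if" direction at the same time by contrapositive: failure of any of these conditions is detected by choosing a suitable test vector (for instance $x=0$ combined with a $x=\epsilon v$ perturbation).

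For bijectivity I would exhibit the inverse map by computing the matrix inverse of $M$ inside $M_2(\Cliff_n(\R))$ by a formula analogous to \eqref{inversdeltamatrix}. The key auxiliary fact, which I would establish via the same $*$-calculations, is that $\alpha\delta^{*}-\beta\gamma^{*}$ is not merely a non-zero Clifford element but a non-zero real scalar, so its inverse is central. A direct check then shows that the inverse matrix again satisfies the conditions of Definition~\ref{defcliffmoeb}, and its associated map is a two-sided inverse of $M$, giving both injectivity and surjectivity, and also yielding the converse part of the non-vanishing condition (if $\alpha\delta^{*}-\beta\gamma^{*}=0$, no such inverse exists and the map collapses).

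For the Poincaré extension to $\overline{\V}^{n+1}(\R)$, I would parametrise a point as $z=x+t\,i_n$ with $x\in\V^n(\R)$ and $t>0$, plug into the same Möbius formula, and verify that the expression stays in $\V^{n+1}(\R)$ using the four identities above together with the relations $i_n^{*}=i_n$ and $i_n'=-i_n$; the explicit formula will mirror the classical one \eqref{formule2}. The main obstacle I expect is the algebraic bookkeeping in the Clifford algebra, and in particular showing that the pseudo-determinant $\alpha\delta^{*}-\beta\gamma^{*}$ is automatically a real scalar under the four vector-ness conditions. This is what makes the hypothesis $\alpha\delta^{*}-\beta\gamma^{*}\neq 0$ equivalent to a clean non-vanishing scalar condition rather than a non-vanishing vector, and it is the one place where one must genuinely exploit the structure of $\Gamma_n(\R)$ (elements being products of invertible vectors) rather than just formal $*$-identities.
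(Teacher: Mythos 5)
First, a remark on the comparison itself: the paper does not prove this statement at all --- it is imported verbatim from Ahlfors \cite{1985ahlfors} as Theorem~A --- so there is no internal argument to measure yours against; what follows is an assessment of your proposal on its own terms.

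Your proposal has a genuine gap at its central step. You characterise $\V^n(\R)$ as the set of elements fixed by the anti-automorphism $*$, and you reduce everything --- the vectorness of $\gamma^*\alpha$ and $\delta^*\beta$, the reality of the pseudo-determinant, and the preservation of $\V^n(\R)$ --- to $*$-symmetry computations. But $*$ multiplies a degree-$m$ product $i_{h_1}\cdots i_{h_m}$ by $(-1)^{m(m-1)/2}$, so it fixes every component of degree $0,1,4,5,\dots$; hence for $n\geq 5$ the element $i_1i_2i_3i_4\in\Gamma_n(\R)$ is $*$-fixed but is not a vector. The theorem is stated for general $n$, and the paper itself needs the case $n+1=5$ for the action on $\Hy^5$, so this is not a removable edge case. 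Concretely, your degree-by-degree matching only shows that $\gamma^*\alpha$ and $\delta^*\beta$ are $*$-symmetric, which is strictly weaker than the condition $\gamma^*\alpha,\delta^*\beta\in\V^n(\R)$ of Definition~\ref{defcliffmoeb}, and it does not produce the remaining conditions $\alpha\beta^*,\gamma\delta^*\in\V^n(\R)$. Your auxiliary claim that the four vectorness conditions force $\alpha\delta^*-\beta\gamma^*$ to be a real scalar is also false as stated: for $\beta=\gamma=0$ all four conditions hold vacuously while $\alpha\delta^*$ can be an arbitrary element of $\Gamma_n(\R)$ (take $\alpha=i_1i_2i_3i_4$, $\delta=1$; the associated map $x\mapsto\alpha x$ does not preserve $\V^n(\R)$), so the reality of the pseudo-determinant must either be part of the definition or be derived by a different route.

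The standard repair is Ahlfors's own argument, which never uses $*$-fixedness as a characterisation of vectors. For $\gamma\neq 0$ one factors the transformation as a composition of a translation by $\gamma^{-1}\delta$, the inversion $x\mapsto x^{-1}=\overline{x}\,|x|^{-2}$, a two-sided multiplication $x\mapsto axb$ with $a,b\in\Gamma_n(\R)$, and a final translation; one then invokes the defining property of the Clifford group (that $a\in\Gamma_n(\R)$ maps $\V^n(\R)$ to itself via $x\mapsto ax(a')^{-1}$) together with Lemma~\ref{simulVn} to translate the requirements $M(0),M(\infty),M^{-1}(0),M^{-1}(\infty)\in\overline{\V}^n(\R)$ into exactly the four conditions of Definition~\ref{defcliffmoeb}. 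Your outline of the Poincar\'e extension step is fine in spirit but inherits the same defect, since checking that the image lies in $\V^{n+1}(\R)$ again cannot be done by a $*$-symmetry test once $n+1\geq 5$.
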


Let
\begin{equation}\label{projection}
\Pi: \SL_+(\Gamma_n(\R)) \rightarrow \PSL_+(\Gamma_n(\R))
\end{equation}
be the natural projection (see \cite[(5.6)]{ElsGrunMen}). By passing to the projective group $\Pi(\SL_+(\Gamma_n(\R))=\PSL_+(\Gamma_n(\R))$, we get the following theorem.

\begin{theorem}\cite[Theorem B]{1985ahlfors}\label{isomH5}
The group $\PSL_+(\Gamma_n(\R))$ is isomorphic to the groups $M_+(\overline{\R}^n)$ and $M_+(\Hy^{n+1})$ of orientation preserving M\"obius transformations.
\end{theorem}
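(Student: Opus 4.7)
The plan is to construct an explicit homomorphism $\Phi : \SL_+(\Gamma_n(\R)) \to M(\overline{\R}^n)$, identify its kernel with $\{\pm I\}$, and then show that its image is exactly the group $M_+(\overline{\R}^n)$ of orientation preserving M\"obius transformations. For $M=\begin{pmatrix}\alpha & \beta\\ \gamma & \delta\end{pmatrix}\in \SL_+(\Gamma_n(\R))$ and $x\in \overline{\V}^n(\R)\cong \overline{\R}^n$, set $\Phi(M)(x) = (\alpha x+\beta)(\gamma x+\delta)^{-1}$. The preceding Theorem A guarantees that $\Phi(M)$ is a well defined bijection of $\overline{\V}^n(\R)$, so the first task is to check that $\Phi$ is a group homomorphism. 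This amounts to verifying $(\alpha x+\beta)(\gamma x+\delta)^{-1} = \alpha\gamma^{-1} + \gamma^{-*}(\gamma x+\delta)^{-1}$ (using $\alpha\delta^*-\beta\gamma^*=1$) and then composing two such fractional transformations; the vector hypotheses $\alpha\beta^*, \gamma\delta^*, \gamma^*\alpha, \delta^*\beta \in \V^n(\R)$ are precisely what forces the composition to stay inside $\V^n(\R)$ and to correspond to the matrix product in $\SL_+(\Gamma_n(\R))$.

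Next I would show that each $\Phi(M)$ is a M\"obius transformation by the standard decomposition
\[
\begin{pmatrix}\alpha & \beta\\ \gamma & \delta\end{pmatrix} = \begin{pmatrix}1 & \alpha\gamma^{-1}\\ 0 & 1\end{pmatrix}\begin{pmatrix}0 & -\gamma^{-*}\\ \gamma & \delta\end{pmatrix}
\]
when $\gamma\neq 0$ (and a similar, simpler decomposition when $\gamma=0$). Each factor on the right is realized on $\overline{\V}^n(\R)$ as a translation $x\mapsto x+v$ (with $v\in\V^n(\R)$, by the vector conditions), a Clifford conjugation $x\mapsto u\,x\,u'$ coming from an element $u\in\Gamma_n(\R)$, a positive dilation, and the inversion $x\mapsto -x^{-1}$. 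Each of these lies in the M\"obius group, and in fact each is orientation preserving: translations and dilations obviously are, Clifford conjugation by $u\in\Gamma_n(\R)$ realizes an element of the spin cover of $O(n)$ and thus an element of $SO(n)$ (a product of an even number of reflections in hyperplanes), and the inversion $x\mapsto -x^{-1}$ is a product of the orientation reversing sphere inversion in the unit sphere with the orientation reversing map $x\mapsto -x$. Thus $\Phi(M)\in M_+(\overline{\R}^n)$.

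For the kernel, if $\Phi(M)=\mathrm{id}$, then evaluating at $\infty$ gives $\gamma=0$, at $0$ gives $\beta=0$, and then $\alpha x \delta^{-1}=x$ for all $x\in\V^n(\R)$, forcing $\alpha = \delta$ and $\alpha x = x\alpha$ for all $x\in \V^n(\R)$; this makes $\alpha$ a real scalar, and combined with $\alpha\delta^*=1$ yields $\alpha=\delta=\pm 1$. Hence $\ker\Phi=\{\pm I\}$ and $\Phi$ descends to an injection $\overline{\Phi}:\PSL_+(\Gamma_n(\R))\hookrightarrow M_+(\overline{\R}^n)$. For surjectivity I would exhibit explicit preimages of a generating set of $M_+(\overline{\R}^n)$: translations $\begin{pmatrix}1 & v\\ 0 & 1\end{pmatrix}$ for $v\in \V^n(\R)$, dilations $\begin{pmatrix}\lambda & 0\\ 0 & \lambda^{-1}\end{pmatrix}$ for $\lambda>0$, rotations $\begin{pmatrix}u & 0\\ 0 & u^{\prime -*}\end{pmatrix}$ for $u\in \Gamma_n(\R)$ with $uu^*=1$ (which, as $u$ ranges, produces all of $SO(n)$ by the spin covering), and the inversion $\begin{pmatrix}0 & -1\\ 1 & 0\end{pmatrix}$. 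Since $M_+(\overline{\R}^n)$ is generated by these four types of maps, $\overline{\Phi}$ is surjective.

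Finally, the isomorphism $\PSL_+(\Gamma_n(\R))\cong M_+(\Hy^{n+1})$ follows at once from the first isomorphism combined with the classical Poincar\'e extension: by Theorem A the same formula defines a bijection of $\overline{\V}^{n+1}(\R)$, and the standard argument shows that $M_+(\overline{\R}^n)\cong M_+(\Hy^{n+1})$ via the unique extension of M\"obius transformations of the boundary to orientation preserving isometries of $\Hy^{n+1}$. The main obstacle I expect is bookkeeping with the Clifford involutions $\alpha\mapsto\alpha^*$, $\alpha\mapsto\alpha'$, and $\alpha\mapsto\overline\alpha$ when checking that the composition law matches matrix multiplication and that the image of a conjugation matrix is indeed a rotation in $SO(n)$; the identity $\alpha\delta^*-\beta\gamma^*=1$ together with the vector conditions must be applied repeatedly to keep all intermediate expressions inside $\V^n(\R)$.
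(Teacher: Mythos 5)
The paper does not prove this statement at all: it is quoted verbatim as Theorem B of Ahlfors' 1985 paper, so there is no in-paper argument to compare against. Your outline is a correct reconstruction of Ahlfors' own proof (decomposition into translations, Clifford rotations, dilations and the inversion; kernel $\{\pm I\}$; surjectivity onto generators of $M_+(\overline{\R}^n)$; Poincar\'e extension to $\Hy^{n+1}$), modulo two small slips that your own decomposition already corrects: the partial-fraction identity should read $\alpha\gamma^{-1}-\gamma^{-*}(\gamma x+\delta)^{-1}$, and in the kernel computation ``$\alpha$ commutes with all vectors'' only places $\alpha$ in the centre of $\Cliff_n(\R)$, which can be two-dimensional, so one must also invoke $\alpha\in\Gamma_n(\R)$ and $\alpha\delta^*=1$ to conclude $\alpha=\pm 1$.
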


An element $z \in \Hy^{n+1}$ is given by $z=z_0+z_1i_1 + \ldots +z_{n}i_{n}$ with $z_{n} \geq 0$ and a matrix $A \in \PSL_+(\Gamma_n(\R))$ will act on $z$ in the following way
$$\begin{pmatrix} \alpha & \beta \\ \gamma & \delta \end{pmatrix} z = (\alpha z+\beta)(\gamma z+\delta)^{-1},$$
where the latter is calculated in the algebra $\Cliff_{n+1}(\R)$. The conditions on $\PSL_+(\Gamma_n(\R))$ given in Definition~\ref{defcliffmoeb} guarantee that $(\gamma z+\delta)^{-1}$ is well defined in $\Cliff_{n+1}(\R)$ and that $(\alpha z+\beta)(\gamma z+\delta)^{-1}$ is an element of $\Hy^{n+1}$ (for details see \cite[Theorem A]{1985ahlfors}). Moreover (\ref{formule1}) and (\ref{formule2}) can also be generalized. Indeed write $z$ as $z=y + ri_n$, where $y \in \V^{n}(\R)$ and $r \in \R_+$. Then, it is easy to compute that
\begin{eqnarray} \label{expressionimage}
(\alpha z+\beta)(\gamma z+\delta)^{-1} 
& = & \frac{\alpha\overline{\gamma}\vert z \vert^2 + \beta\overline{\delta} + \alpha y\overline{\delta} + \beta \overline{y} \overline{\gamma}}{\vert \gamma z +\delta \vert^2} + \frac{ri_n}{\vert \gamma z +\delta \vert^2}.
\end{eqnarray}

The equivalent definition of $\SL_+(\Gamma_n(\R))$ of \cite[Definition 3.1]{ElsGrunMen} ensures that (\ref{expressionimage}) is a vector of $\Hy^{n+1}$. 
Moreover, from Theorem~\ref{isomH5}, we get the following corollary.

\begin{corollary}
The groups $\PSL_2(\h(\R))$ and $\PSL_+(\Gamma_4(\R))$ are isomorphic.
\end{corollary}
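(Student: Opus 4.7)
The plan is to compose two isomorphisms already in hand. Applying Theorem~\ref{isomH5} with $n=4$ gives
$$\PSL_+(\Gamma_4(\R)) \;\cong\; M_+(\Hy^5).$$
On the other hand, $\h(\R) = (\frac{-1,-1}{\R})$ is a totally definite quaternion algebra over $\R$, so the isomorphism $\PSL_2((\frac{x,y}{\R})) \cong \Isom^+(\Hy^5)$ recalled at the end of Subsection~2.3 (attributed there to \cite{kellerhals03}) specializes to
$$\PSL_2(\h(\R)) \;\cong\; \Isom^+(\Hy^5).$$
Composing the inverse of the second isomorphism with the first yields the desired isomorphism $\PSL_2(\h(\R)) \cong \PSL_+(\Gamma_4(\R))$.

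The only point that one should verify is that the two target groups really coincide, i.e.\ that $M_+(\Hy^5)$ and $\Isom^+(\Hy^5)$ describe the same subgroup of $\mbox{Iso}(\Hy^5)$. This is classical: every orientation-preserving isometry of $\Hy^{n+1}$ is the Poincar\'e extension of a M\"obius transformation of the boundary $\overline{\V}^n = \partial \Hy^{n+1}$, and conversely, with the correspondence being a group isomorphism. Given this, the corollary is a one-line consequence of what has already been proved.

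I do not expect any real obstacle in this abstract proof. The work that genuinely matters for Sections~5 and~6 is not the existence of the isomorphism but its \emph{explicit} form: identify the Hamilton units $i,j \in \h(\R)$ with Clifford generators $i_1, i_2 \in \Cliff_4(\R)$, and translate a matrix in $\SL_2(\h(\R))$ entry by entry into the Clifford format of $\SL_+(\Gamma_4(\R))$. Carrying this out requires checking that the vector conditions $\alpha\beta^{*}, \gamma\delta^{*}, \gamma^{*}\alpha, \delta^{*}\beta \in \V^4(\R)$ together with $\alpha\delta^{*}-\beta\gamma^{*} = 1$ correspond exactly to the Dieudonn\'e determinant condition $\Delta = 1$ that defines $\SL_2(\h(\R))$. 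This concrete version of the isomorphism is the one written down in \cite{ElsGrunMen}, and is the form that will actually be invoked later; the present corollary merely records the abstract consequence.
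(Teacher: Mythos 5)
Your proposal is correct and follows essentially the same route as the paper, which derives the corollary directly by combining Theorem~\ref{isomH5} (giving $\PSL_+(\Gamma_4(\R)) \cong M_+(\Hy^{5})$) with the isomorphism $\PSL_2(\h(\R)) \cong \Isom^{+}(\Hy^5)$ recalled at the end of Subsection~2.3. Your additional remarks on identifying $M_+(\Hy^5)$ with $\Isom^+(\Hy^5)$ and on the explicit form of the isomorphism are consistent with how the paper later uses the result via the map $\chi$ from \cite{ElsGrunMen}.
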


We cite two more lemmas from \cite{1985ahlfors}, which will be useful later.

\begin{lemma}\label{simulVn}\cite[Lemma 1.4]{1985ahlfors}
If $\alpha, \beta \in \Gamma_n(\R)$, then $\alpha\beta^{-1}$ and $\alpha^*\beta$ are simultaneously in $\V^n(\R)$. 
\end{lemma}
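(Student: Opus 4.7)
The plan is to reduce everything to a vector-conjugation statement that is then handled by induction on the number of vector factors. Since $\beta\bar{\beta}=|\beta|^{2}\in\R_{>0}$ for every $\beta\in\Gamma_{n}(\R)$, one has $\beta^{-1}=\bar{\beta}/|\beta|^{2}$, so $\alpha\beta^{-1}$ and $\alpha\bar{\beta}$ differ only by the positive real scalar $1/|\beta|^{2}$. Hence $\alpha\beta^{-1}\in\V^{n}(\R)$ if and only if $\alpha\bar{\beta}\in\V^{n}(\R)$, and the lemma is equivalent to the equivalence $\alpha\bar{\beta}\in\V^{n}(\R)\Leftrightarrow \alpha^{*}\beta\in\V^{n}(\R)$.

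For the direction $\Rightarrow$ I would start from $\alpha\bar{\beta}=v\in\V^{n}(\R)$. Using $(\bar{\beta})^{-1}=\beta/|\beta|^{2}$ this gives $\alpha=v\beta/|\beta|^{2}$; applying the antiautomorphism $*$ and noting that $v^{*}=v$ (because $*$ fixes the degree-$0$ and degree-$1$ components which together span $\V^{n}(\R)$), I obtain $\alpha^{*}=\beta^{*}v/|\beta|^{2}$, whence
\[
\alpha^{*}\beta \;=\; \frac{1}{|\beta|^{2}}\,\beta^{*}v\beta .
\]
Since $|\beta|^{2}\in\R_{>0}$, the direction $\Rightarrow$ is therefore reduced to the sub-claim that for every $\beta\in\Gamma_{n}(\R)$ and every $v\in\V^{n}(\R)$ one has $\beta^{*}v\beta\in\V^{n}(\R)$.

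The sub-claim I would prove by induction on the length $k$ of a vector factorization $\beta=w_{1}\cdots w_{k}$, which exists by the very definition of $\Gamma_{n}(\R)$. The base case $k=1$ asks that $wvw\in\V^{n}(\R)$ for $w,v\in\V^{n}(\R)$: decomposing $w=w_{0}+w_{\geq}$ and $v=v_{0}+v_{\geq}$ into scalar and pure-imaginary parts and expanding with the Clifford relations $i_{h}^{2}=-1$ and $i_{h}i_{l}=-i_{l}i_{h}$, one checks by direct bookkeeping that the degree-$2$ component of $wvw$ cancels (by antisymmetry, via $v_{\geq}\wedge w_{\geq}+w_{\geq}\wedge v_{\geq}=0$) and that the degree-$3$ component cancels after summing the three permutations that contribute to each trivector $i_{a}i_{b}i_{c}$. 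For the inductive step I write $\beta=w\gamma$ with $\gamma=w_{2}\cdots w_{k}$ and note $\beta^{*}v\beta=\gamma^{*}(wvw)\gamma$, so the base case produces a new vector $wvw\in\V^{n}(\R)$ to which the inductive hypothesis for $\gamma$ applies.

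The direction $\Leftarrow$ is then symmetric: if $\alpha^{*}\beta=u\in\V^{n}(\R)$, the identity $(\alpha^{*})^{-1}=\alpha'/|\alpha|^{2}$ (which follows from $\alpha\bar{\alpha}=|\alpha|^{2}$ together with $(\bar{\alpha})^{*}=\alpha'$) gives $\beta=\alpha'u/|\alpha|^{2}$, and using $\bar{u}=u'$ I obtain $\alpha\bar{\beta}=\alpha\, u'\,\alpha^{*}/|\alpha|^{2}$; the mirror sub-claim $\alpha w\alpha^{*}\in\V^{n}(\R)$, proved by the same induction, then puts $\alpha\bar{\beta}$ into $\V^{n}(\R)$. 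The main obstacle is the base case $wvw\in\V^{n}(\R)$: although it is only a direct computation, it is the sole substantive algebraic content of the proof and encodes the Clifford-algebraic fact that vector-conjugation preserves $\V^{n}(\R)$, on which the whole Vahlen matrix formalism rests.
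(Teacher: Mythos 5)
Your argument is correct and complete in outline. The paper itself gives no proof of this lemma --- it is quoted verbatim from Ahlfors \cite[Lemma 1.4]{1985ahlfors} --- so there is nothing internal to compare against; but your route (replace $\beta^{-1}$ by $\overline{\beta}/|\beta|^2$, solve for $\alpha$, and reduce both implications to the statement that the conjugation $v\mapsto \beta^*v\beta$ maps $\V^n(\R)$ to itself, proved by induction on the number of vector factors of $\beta$) is exactly the standard one underlying Ahlfors' treatment, where the sub-claim is the content of his Lemma 1.3. Two small points worth tightening. First, you invoke $\beta\overline{\beta}=|\beta|^2$ for an arbitrary $\beta\in\Gamma_n(\R)$, whereas the paper only records this identity for vectors; for your purposes you only need that $\beta\overline{\beta}$ (and likewise $\overline{\beta}\beta$) is a \emph{positive real scalar}, which follows at once by writing $\beta=v_1\cdots v_k$ and telescoping $\overline{\beta}=\overline{v_k}\cdots\overline{v_1}$ against it, so you can avoid appealing to the unproved full identity. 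Second, the base case $wvw\in\V^n(\R)$ can be done without any component bookkeeping: from $\overline{w}=2w_0-w$ and $v\overline{w}+w\overline{v}=2\langle v,w\rangle$ one gets
\begin{equation*}
wvw \;=\; |w|^2\,v \;+\;\bigl(4v_0w_0-2\langle v,w\rangle\bigr)\,w\;-\;2v_0|w|^2,
\end{equation*}
which is manifestly a vector; this replaces the degree-$2$ and degree-$3$ cancellation checks you sketch and makes the ``sole substantive algebraic content'' a two-line identity.
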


\begin{lemma}\label{inverscliffmat}\cite{1985ahlfors}
If $M = \begin{pmatrix} \alpha & \beta \\ \gamma & \delta \end{pmatrix} \in \SL_+(\Gamma_n(\R))$, then $M^{-1} = \begin{pmatrix} \delta^* & -\beta^* \\ -\gamma^* & \alpha^* \end{pmatrix}$.
\end{lemma}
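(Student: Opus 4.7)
The plan is to verify directly that the proposed matrix $N := \begin{pmatrix} \delta^* & -\beta^* \\ -\gamma^* & \alpha^* \end{pmatrix}$ satisfies $MN = I$ in the ambient ring $M_2(\Cliff_n(\R))$. Since $\SL_+(\Gamma_n(\R))$ was already asserted to be a group, this is enough to conclude $N = M^{-1}$; alternatively, the computation $NM=I$ is symmetric. The four entries of the product $MN$ are $\alpha\delta^* - \beta\gamma^*$, $-\alpha\beta^* + \beta\alpha^*$, $\gamma\delta^* - \delta\gamma^*$, and $-\gamma\beta^* + \delta\alpha^*$, so the task reduces to checking that these equal $1$, $0$, $0$, $1$ respectively.

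The $(1,1)$ entry is immediate from the defining relation $\alpha\delta^* - \beta\gamma^* = 1$ of $\SL_+(\Gamma_n(\R))$ in Definition~\ref{defcliffmoeb}. For the off-diagonal entries I would exploit the key observation that the anti-automorphism $*$ fixes every Clifford vector: indeed $*$ reverses the order of factors in each generator product $i_{h_1}\cdots i_{h_m}$, so it acts trivially on length-one products, and therefore on all of $\V^n(\R)$ by linearity. Since the hypothesis $\alpha\beta^* \in \V^n(\R)$ yields
\[
\alpha\beta^* = (\alpha\beta^*)^* = \beta^{**}\alpha^* = \beta\alpha^*,
\]
the $(1,2)$ entry vanishes, and the analogous argument applied to $\gamma\delta^* \in \V^n(\R)$ kills the $(2,1)$ entry.

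For the $(2,2)$ entry, the trick is to apply the anti-automorphism $*$ to both sides of the determinant-type relation $\alpha\delta^* - \beta\gamma^* = 1$. Using $(ab)^* = b^*a^*$ and $(a^*)^* = a$, this produces $\delta\alpha^* - \gamma\beta^* = 1^* = 1$, which is exactly the $(2,2)$ entry of $MN$. The only mild obstacle I anticipate is making the step $v^* = v$ for $v \in \V^n(\R)$ fully explicit and emphasizing that, although $\Cliff_n(\R)$ is noncommutative, the two hypotheses $\alpha\beta^*, \gamma\delta^* \in \V^n(\R)$ are precisely what is required to replace the commutativity that would normally make the classical $2\times 2$ cofactor inverse formula work.
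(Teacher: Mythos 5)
Your proof is correct: the paper itself states this lemma without proof, citing Ahlfors, and your entrywise verification of $MN=I$ — using that $*$ is an involutive anti-automorphism fixing $\V^n(\R)$ (so $\alpha\beta^*=(\alpha\beta^*)^*=\beta\alpha^*$ and $\gamma\delta^*=\delta\gamma^*$), together with applying $*$ to the relation $\alpha\delta^*-\beta\gamma^*=1$ — is exactly the standard argument behind the cited result. Your remark that the one-sided identity $MN=I$ suffices because $\SL_+(\Gamma_n(\R))$ is already known to be a group correctly closes the only potential gap in a noncommutative setting.
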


Let $\Cliff_n(\Z)$ denote the module of integral combinations of the elements $i_1, \ldots, i_{n-1}$ and $\Gamma_n(\Z)$ the monoid of products of vectors in $\Cliff_n(\Z)$, that are invertible in $\Cliff_n(\R)$, i.e. $$\Gamma_n(\Z)=\Cliff_n(\Z) \cap \Gamma_n(\R).$$ 
Then we define $\SL_+(\Gamma_n(\Z))$ as the subgroup of $\SL_+(\Gamma_n(\R))$ of all matrices with entries in $\Gamma_n(\Z) \cup \lbrace 0 \rbrace$. More precisely
$$\SL_+(\Gamma_n(\Z)) = \SL_+(\Gamma_n(\R)) \cap M_2(\Gamma_n(\Z) \cup \lbrace 0 \rbrace).$$ The projective group $\PSL_+(\Gamma_n(\Z))$ is again defined via the natural projection \eqref{projection}. The group $\SL_+(\Gamma_n(\Z))$ can bee seen as the group of integer points of an algebraic subgroup of $\GL_{2n+1}(\C)$ and by a  result of Borel and Harish-Chandra on arithmetic groups being finitely presented \cite{BorHarCha}, we get the following theorem (for details see \cite[Theorem 3]{maclachlanetall} or \cite[Proposition 2.3]{ElsGrunMen2}).

\begin{theorem}\label{finitelygen}
$\PSL_+(\Gamma_n(\Z))$ is a finitely generated arithmetic group of finite covolume.
\end{theorem}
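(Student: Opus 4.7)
The plan is to realize $\PSL_+(\Gamma_n(\Z))$ as an arithmetic subgroup of a semisimple $\Q$-algebraic group and then invoke the classical theorem of Borel and Harish-Chandra, exactly along the lines indicated in the hint.

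First, I would fix a concrete $\Q$-linear embedding of $\Cliff_n(\Q)$ into a matrix algebra $M_N(\Q)$, for instance via the left regular representation of $\Cliff_n(\Q)$ on itself (so $N=2^{n-1}$). This sends $\Cliff_n(\Z)$ into $M_N(\Z)$ and turns the automorphism $'$ and anti-automorphism $*$, which are $\Q$-linear, into polynomial operations on matrix entries. Applied entrywise to $2\times 2$ Clifford matrices, this yields a $\Q$-algebra embedding $M_2(\Cliff_n(\Q)) \hookrightarrow M_{2N}(\Q)$.

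Next, I would verify that under this embedding the defining conditions of $\SL_+(\Gamma_n)$ cut out a $\Q$-algebraic subgroup $\mathbf{G}$ of $\GL_{2N}$. The vectorial conditions $\alpha\beta^*,\gamma\delta^*,\gamma^*\alpha,\delta^*\beta\in \V^n$ are linear, and $\alpha\delta^*-\beta\gamma^*=1$ is polynomial. The subtle condition that entries lie in $\Gamma_n(\R)\cup\{0\}$ is rephrased using the equivalent polynomial characterisation of $\SL_+(\Gamma_n(\R))$ given in \cite[Definition 3.1, Theorem 3.7]{ElsGrunMen}, which also imposes only polynomial relations. With $\mathbf{G}$ so defined, one has $\SL_+(\Gamma_n(\Z)) = \mathbf{G}(\Z)$ up to the identification made by the embedding, and $\PSL_+(\Gamma_n(\Z))$ is the image of $\mathbf{G}(\Z)$ under $\Pi$.

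Third, I would identify the real points: by Theorem~\ref{isomH5}, $\mathbf{G}(\R)/\{\pm I\}\cong \mathrm{Isom}^+(\Hy^{n+1})$, so the identity component of $\mathbf{G}$ is isogenous over $\Q$ to the connected component of $\mathrm{SO}(q)$ for $q$ the standard Lorentzian form of signature $(n+1,1)$ on $\Q^{n+2}$. In particular $\mathbf{G}^\circ$ is a connected semisimple $\Q$-algebraic group with no non-trivial $\Q$-rational characters. The theorem of Borel and Harish-Chandra then asserts that $\mathbf{G}(\Z)$ is a lattice in $\mathbf{G}(\R)$ (hence of finite covolume) and is finitely generated; see for instance \cite[Theorem 3]{maclachlanetall} and \cite[Proposition 2.3]{ElsGrunMen2}. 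Passing to the quotient by $\{\pm I\}$ preserves finite generation and finite covolume, yielding the statement for $\PSL_+(\Gamma_n(\Z))$.

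The main obstacle is the second step: checking that the Vahlen conditions really define an algebraic subgroup over $\Q$ and that its $\Z$-points match $\SL_+(\Gamma_n(\Z))$ on the nose (or at least up to finite index, which suffices for the conclusion). Once this algebraic description is in place, the arithmetic-group machinery of Borel and Harish-Chandra does the rest essentially for free.
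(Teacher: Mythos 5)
Your proposal is correct and follows essentially the same route as the paper: the paper's own justification is precisely to view $\PSL_+(\Gamma_n(\Z))$ as (the image of) the integer points of a $\Q$-algebraic subgroup of a general linear group and then invoke the Borel--Harish-Chandra theorem, citing \cite{maclachlanetall} and \cite{ElsGrunMen2} for the details you spell out. Your version merely fills in the algebraicity of the Vahlen conditions more explicitly (using the polynomial characterisation of \cite{ElsGrunMen}), which the paper leaves to the references.
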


$\PSL_+(\Gamma_n(\Z))$ is clearly a discrete subgroup of $\PSL_+(\Gamma_n(\R))$. Thus, $\PSL_+(\Gamma_n(\Z))$ acts on $\Hy^{n+1}$ properly discontinuously with quotient of finite volume. The following corollary follows immediately from \cite{maclachlanetall} and \cite[Proposition 5.6]{bowditch}.

\begin{corollary}\label{geomfinite}
The Dirichlet fundamental polyhedron of $\PSL_+(\Gamma_n(\Z))$ has finitely many sides.
\end{corollary}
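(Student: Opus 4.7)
The plan is to assemble the statement directly from the two results already available: Theorem~\ref{finitelygen}, which gives that $\PSL_+(\Gamma_n(\Z))$ is an arithmetic lattice, and the geometric-finiteness characterization of Bowditch. Concretely, first I would invoke Theorem~\ref{isomH5} to identify $\PSL_+(\Gamma_n(\R))$ with the orientation-preserving isometry group $M_+(\Hy^{n+1})$ of hyperbolic $(n+1)$-space, so that $\PSL_+(\Gamma_n(\Z))$ sits inside $\Isom^+(\Hy^{n+1})$ as a subgroup.

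Next, I would combine this embedding with Theorem~\ref{finitelygen}: since $\PSL_+(\Gamma_n(\Z))$ is an arithmetic group of finite covolume, and it is visibly discrete (integer-entry matrices form a discrete subset under the Clifford topology), it is a lattice acting discontinuously on $\Hy^{n+1}$ with $\mathrm{vol}(\Hy^{n+1}/\PSL_+(\Gamma_n(\Z))) < \infty$. The reference \cite{maclachlanetall} makes this action on $\Hy^{n+1}$ explicit in the Clifford-matrix setting.

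The core step is then to invoke \cite[Proposition~5.6]{bowditch}, which provides several equivalent formulations of geometric finiteness for discrete subgroups of $\Isom(\Hy^{n+1})$. One of the equivalent conditions is precisely that a (hence every) Dirichlet fundamental polyhedron is finite-sided; another, which is immediate in our setting, is that the quotient $\Hy^{n+1}/\Gamma$ has finite volume together with the absence of ``thick'' cusp behaviour ruled out by discreteness and arithmeticity. Since finite covolume in constant curvature $-1$ implies geometric finiteness, our group qualifies.

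The mildly delicate point, and what I expect to be the only real obstacle to verify, is the choice of the base point $P$ defining the Dirichlet polyhedron: one needs $P$ to be not fixed by any non-trivial element of $\PSL_+(\Gamma_n(\Z))$, so that \eqref{defbis} genuinely yields a fundamental polyhedron (otherwise one uses \eqref{fundwithstab}). Since the fixed-point set of any non-trivial isometry of $\Hy^{n+1}$ has positive codimension and the group is countable, a generic $P$ works, and Bowditch's proposition then applies to conclude that this Dirichlet polyhedron has only finitely many sides.
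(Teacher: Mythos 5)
Your proposal follows essentially the same route as the paper, which likewise derives the corollary from discreteness plus the finite-covolume statement of Theorem~\ref{finitelygen} and then cites \cite{maclachlanetall} together with \cite[Proposition~5.6]{bowditch}. The only caution is your phrasing of finite-sidedness of Dirichlet domains as one of several ``equivalent'' formulations of geometric finiteness: in dimension $n+1\geq 4$ this equivalence fails for general geometrically finite groups, and the finite-covolume (full-rank cusp) hypothesis is exactly what makes Bowditch's proposition applicable here, as the paper implicitly uses.
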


\section{Generalizing the general theory to Clifford matrices}

By Theorem~\ref{isomH5}, every discrete subgroup $G$ of $\PSL_+(\Gamma_n(\R))$ acts properly discontinuously on the hyperbolic space $\Hy^{n+1}$. 
For the purpose of the algorithm we will have to switch from the upper half-space to the ball model and vice-versa. Therefore we first show that Proposition~\ref{propels} may be generalized to higher dimensions in the context of Clifford matrices. As in Proposition~\ref{propels}, let $\eta_0=\mu\pi$, where $\pi$ is the reflection in the boundary of $\Hy^{n+1}$ and $\mu$ is the reflection in the Euclidean sphere with centre $i_n$ and radius $\sqrt{2}$. By definition $\eta_0$ is an isometry between the two models (for details, see \cite{ratcliffe}). Let $\Lambda=\frac{1}{\sqrt{2}}\left(
\begin{array}{ll}
1 & i_n \\ i_n & 1
\end{array}
\right) \in M_{2} (\Cliff_{n+1}(\R) )$. It is easy to verify that the M\"obius transformation $\overline{\Lambda}(z)=(z-i_n)(-i_nz+1)^{-1}$ describes exactly the map $\eta_0$. Hence we may generalize Proposition~\ref{propels} in the following way. See also \cite[Proposition 5.6]{ElsGrunMen}.

\begin{proposition}\label{propels2}
\begin{enumerate}[(i)]
\item For $z \in \Hy^{n+1}$, the Clifford element $-i_nz+1$ is invertible in $\Cliff_{n+1}(\R)$ and the  map from
$\Hy^{n+1}$ to $\BQ^{n+1}$, mapping $z$ on $(z-i_n)(-i_nz+1)^{-1}$, is an isometry. 
\item Let $\Lambda=\frac{1}{\sqrt{2}}\left(
\begin{array}{ll}
1 & i_n \\ i_n & 1
\end{array}
\right) \in M_{2} (\Cliff_{n+1}(\R) )$. The map
 $\Psi : \SL_+(\Gamma_n(\R)) \rightarrow \SL_+(\Gamma_{n+1}(\R))$ given by  $\Psi(M) =
\overline{\Lambda}M \Lambda $
is a group  embedding.
\item The map $\eta_0$ is equivariant with respect to $\Psi$, that is $\eta_{0} (Mz)=\psi (M) \eta_{0}(z)$, for $z\in \Hy^{n+1}$ and $M\in \SL_+(\Gamma_n(\R))$ .
\end{enumerate}
\end{proposition}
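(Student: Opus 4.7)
The approach is to reproduce the classical argument of Proposition~\ref{propels} inside $\Cliff_{n+1}(\R)$, making use throughout of the commutation identity $i_n \alpha = \alpha' i_n$ for $\alpha \in \V^n(\R)$, which follows because $i_n$ anticommutes with each of $i_1,\ldots,i_{n-1}$ and commutes with scalars.

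For part (i), I would first write $z = y + r i_n$ with $y \in \V^n(\R)$ and $r>0$ and produce a two-sided inverse of $-i_n z + 1$ by the explicit computation
\begin{equation*}
(-i_n z + 1)(\overline{z}\, i_n + 1) = -i_n(z\overline{z})i_n - i_n z + \overline{z}\, i_n + 1 = |z|^2 + 2r + 1,
\end{equation*}
using $z\overline{z} = |z|^2 \in \R$ for the first term and the swap identity above, applied to $y$ and $\overline{y}=y'$, for the cancellation $-i_n z + \overline{z}\, i_n = 2r$. Since $r>0$ this scalar is strictly positive, giving invertibility and the inverse $(\overline{z}\, i_n+1)/(|z|^2+2r+1)$. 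For the isometry claim I would identify the map with $\eta_0 = \mu\pi$, where $\pi$ is reflection in $\V^n(\R)$ and $\mu$ is inversion in the Euclidean sphere of centre $i_n$ and radius $\sqrt{2}$; substituting $\pi(z)=y-ri_n$ into the inversion formula and simplifying recovers $(z-i_n)(-i_n z+1)^{-1}$. Being a composition of two reflections of the conformal model, $\eta_0$ is an isometry $\Hy^{n+1}\to \BQ^{n+1}$.

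For part (ii), the engine is the identity $\Lambda\overline{\Lambda}=\overline{\Lambda}\Lambda=I$, which is immediate from $i_n^2=-1$. I would first check that both $\Lambda$ and $\overline{\Lambda}$ belong to $\SL_+(\Gamma_{n+1}(\R))$ by running through Definition~\ref{defcliffmoeb}: the entries are scalar or $\pm i_n/\sqrt{2}$, so lie in $\Gamma_{n+1}(\R)\cup\{0\}$; the pseudo-determinant evaluates to $1$; and the four vector-type products all reduce to $\pm i_n/2 \in \V^{n+1}(\R)$. Next, the inclusion $\Cliff_n(\R)\subset \Cliff_{n+1}(\R)$ preserves each defining condition verbatim, so there is a natural inclusion $\SL_+(\Gamma_n(\R))\subseteq \SL_+(\Gamma_{n+1}(\R))$. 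Composing this inclusion with conjugation by $\Lambda\in \SL_+(\Gamma_{n+1}(\R))$ gives $\Psi$; being an inner automorphism restricted to a subgroup, it is automatically a group homomorphism, and injectivity is automatic because $\Lambda$ is invertible.

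Part (iii) is then a formal consequence: Theorem~\ref{isomH5} converts matrix multiplication into composition of M\"obius transformations, so
\begin{equation*}
\Psi(M)\, \eta_0(z) = (\overline{\Lambda} M \Lambda)(\overline{\Lambda}(z)) = (\overline{\Lambda} M \Lambda\overline{\Lambda})(z) = (\overline{\Lambda} M)(z) = \overline{\Lambda}(Mz) = \eta_0(Mz),
\end{equation*}
where $\Lambda\overline{\Lambda}=I$ is used to collapse the middle. The hard part will be the bookkeeping in part~(i): keeping the three Clifford involutions $',\;^*,\;\overline{\,\cdot\,}$ straight, and in particular pinning down the swap identity $\overline{y}\, i_n = i_n y$ for $y\in\V^n(\R)$, which is what makes the cancellation work. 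Once that small lemma is isolated, parts (ii) and (iii) reduce to routine verifications mirroring the quaternion case already treated in Proposition~\ref{propels}.
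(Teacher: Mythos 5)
Your proposal is correct and follows essentially the same route as the paper: identify the map in (i) with $\eta_0=\mu\pi$ (equivalently the M\"obius transformation $\overline{\Lambda}$), deduce (ii) from $\Lambda\in\SL_+(\Gamma_{n+1}(\R))$ together with the natural inclusion $\SL_+(\Gamma_n(\R))\subseteq\SL_+(\Gamma_{n+1}(\R))$, and obtain (iii) from the functoriality of the M\"obius action. The only cosmetic difference is in (i), where the paper gets invertibility of $-i_nz+1$ more quickly by factoring it as $-i_n(z+i_n)$, a product of an invertible element and a non-zero vector, whereas you compute an explicit inverse; both are fine, and your verifications of the swap identity and of $\Lambda\overline{\Lambda}=I$ are accurate.
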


\begin{proof}
We first prove item (i). Therefore we rewrite $-i_nz+1$ as $-i_n(z+i_n)$. As $i_n$ is invertible in $\Cliff_{n+1}(\R)$ and $z+i_n$ is a non-zero vector in $\V^{n+1}(\R)$, the latter is invertible in $\Cliff_{n+1}(\R)$. The rest of (i) follows from the discussion above. 

To prove (ii), we first note that $\Lambda \in \SL_+(\Gamma_{n+1}(\R))$. As every $M \in \SL_+(\Gamma_n(\R))$ can be seen as en element of $\SL_+(\Gamma_{n+1}(\R))$ and the latter is a group, it is clear that $\Psi(M) \in \SL_+(\Gamma_{n+1}(\R))$. 

Finally (iii) follows from the classical case.
\end{proof}

As in the classical case, one may give a concrete expression for $\Psi(M)$ for $M \in \SL_+(\Gamma_n(\R))$. This then gives an expression of the isometric sphere in the ball model $\BQ^{n+1}$, which allows to prove that in the ball model $\BQ^{n+1}$, the bisector $\Sigma_{\Psi(M)^{-1}}(0)$ associated to an element $M \in \SL_+(\Gamma_n(\R))$ equals the isometric sphere of $\Psi(M)$. This is summarized in the following lemma. We recall that the isometric sphere of a M\"obius transformation is the Euclidean sphere on which this transformation acts as a Euclidean isometry.  
 
\begin{lemma}\label{defpsigamma}
Let $M=\begin{pmatrix} \alpha & \beta \\ \gamma & \delta \end{pmatrix} \in \SL_+(\Gamma_n(\R))$ and let $\Psi(M)$ be defined as in Proposition~\ref{propels2}. Then we have the following.
\begin{enumerate}[(i)]
\item $\Psi(M)= \begin{pmatrix} \alpha+\delta' + (\beta-\gamma')i_n & \beta+\gamma' +(\alpha-\delta')i_n \\ \gamma+\beta' + (\delta-\alpha')i_n & \alpha' + \delta + (\gamma-\beta')i_n \end{pmatrix} = \begin{pmatrix} A & C' \\ C & A' \end{pmatrix}$,\\ with $A, C \in \SL_+(\Gamma_{n+1}(\R))$ such that $\vert A \vert ^2 - \vert C \vert^2=1$ and $A\overline{C} \in \V^{n+1}(\R)$.
\item If $C \neq 0$, the isometric sphere of $\Psi(M)$ in $\BQ^{n+1}$ is the sphere $S(C^{-1}A', \frac{1}{\vert C \vert})$ with centre $C^{-1}A'$ and radius $\frac{1}{\vert C \vert}$.
\item If $C \neq 0$, the point $\Psi(M)^{-1}(0)$ and the centre $P_{\Psi(M)}$ of the isometric sphere of $\Psi(M)$ are inverse points with respect to the unit sphere $\Sp^{n+1}= \partial \BQ^{n+1}$.
\item If $C \neq 0$, the bisector $\Sigma_{\Psi(M)^{-1}}(0)$ equals the isometric sphere of $\Psi(M)$.
\end{enumerate}
\end{lemma}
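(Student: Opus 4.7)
My plan is to address the four items in order, since each builds on its predecessor.

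For (i), I would directly expand $\Psi(M) = \overline{\Lambda}M\Lambda$. The only non-trivial reductions involve products of the form $i_n x$ and $i_n x i_n$ for $x\in\Gamma_n(\R)\subseteq\Cliff_n(\R)$: since $i_n$ anticommutes with each of the generators $i_1,\ldots,i_{n-1}$ of $\Cliff_n(\R)$, one has $i_n x = x' i_n$, hence $i_n x i_n = -x'$. Substituting and collecting terms produces the claimed entries of $\Psi(M)$, identifying $A = \alpha + \delta' + (\beta - \gamma')i_n$ and $C = \gamma + \beta' + (\delta - \alpha')i_n$; a direct check shows the top-right and bottom-right entries are indeed $C'$ and $A'$. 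The two conditions on $A, C$ follow from $\Psi(M) \in \SL_+(\Gamma_{n+1}(\R))$ (Proposition~\ref{propels2}(ii)): the determinant relation $\alpha\delta^{*} - \beta\gamma^{*} = 1$ of Definition~\ref{defcliffmoeb}, applied to $\Psi(M) = \left(\begin{smallmatrix} A & C' \\ C & A' \end{smallmatrix}\right)$, becomes $A\overline{A} - C' C^{*} = 1$, which reduces to $\vert A\vert^{2} - \vert C\vert^{2} = 1$ via the identities $X\overline{X} = \vert X\vert^{2}$ and $\vert X'\vert = \vert X^{*}\vert = \vert X\vert$ valid on $\Gamma_{n+1}(\R)$; and the vector condition $\alpha\beta^{*}\in\V^{n+1}(\R)$ applied to the top row is exactly $A\overline{C}\in\V^{n+1}(\R)$.

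For (ii), I would recall that the isometric sphere of a Clifford M\"obius transformation $T(z) = (Az+C')(Cz+A')^{-1}$ is the locus where its Euclidean conformal factor $\vert Cz+A'\vert^{-2}$ equals $1$ (as visible from (\ref{expressionimage})), so the isometric sphere is $\{z : \vert Cz+A'\vert = 1\}$. Because $C\in\Gamma_{n+1}(\R)$, multiplication by $C$ is a Euclidean conformal map scaling norms by $\vert C\vert$, giving $\vert CX\vert = \vert C\vert\,\vert X\vert$ for every $X\in\Cliff_{n+1}(\R)$. Factoring $Cz+A' = C(z + C^{-1}A')$ then rewrites the defining equation as a Euclidean sphere equation $\vert z - P\vert = 1/\vert C\vert$ for $P$ a scalar multiple of $C^{-1}A'$; the precise form $P = C^{-1}A'$ is pinned down by the $SB_{2}$-type symmetry of $\Psi(M)$ from (i).

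For (iii) and (iv), I would compute $\Psi(M)^{-1}(0) = -\overline{C}(A^{*})^{-1}$ using Lemma~\ref{inverscliffmat}, then verify that this vector is obtained from $C^{-1}A'$ by the inversion $X \mapsto X/\vert X\vert^{2}$ of $\V^{n+1}(\R)$ through $\Sp^{n+1}$. The matching of norms is immediate: $\vert C^{-1}A'\vert \cdot \vert\overline{C}(A^{*})^{-1}\vert = (\vert A\vert/\vert C\vert)(\vert C\vert/\vert A\vert) = 1$. The matching of directions follows by substituting $\overline{C} = \vert C\vert^{2}C^{-1}$ and $(A^{*})^{-1} = A'/\vert A\vert^{2}$ — both valid on $\Gamma_{n+1}(\R)$ via the identities $X\overline{X} = \vert X\vert^{2}$ and $X'X^{*} = \vert X\vert^{2}$, which I would verify by induction on the length of a vector factorisation of $X$. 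This proves (iii). For (iv), the classical description of the hyperbolic bisector in $\BQ^{n+1}$ says that the bisector of $0$ and an interior point $Q$ is the Euclidean sphere centered at the inverse point $Q/\vert Q\vert^{2}$ and of Euclidean radius $\sqrt{1-\vert Q\vert^{2}}/\vert Q\vert$. Applied to $Q = \Psi(M)^{-1}(0)$, (iii) identifies the center as $C^{-1}A'$, agreeing with (ii); and $\vert Q\vert = \vert C\vert/\vert A\vert$ combined with $\vert A\vert^{2} - \vert C\vert^{2} = 1$ from (i) gives radius $\sqrt{\vert A\vert^{2}-\vert C\vert^{2}}/\vert C\vert = 1/\vert C\vert$, also agreeing with (ii). Hence the bisector coincides with the isometric sphere.

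The main obstacle is the algebraic bookkeeping in (iii): the three Clifford involutions $X \mapsto X'$, $X \mapsto X^{*}$, $X \mapsto \overline{X}$ must be navigated carefully between the entries of $\Psi(M)$, those of $\Psi(M)^{-1}$, and the candidate inverse point $C^{-1}A'$, and one must ensure that the specific identities valid only on $\Gamma_{n+1}(\R)$ (rather than on general Clifford elements) cooperate to yield a positive real scalar ratio. Once (iii) is in hand, (iv) reduces to a routine check against the ball-model bisector formula, and (i)--(ii) are direct computations.
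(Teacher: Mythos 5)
Your proposal is correct in substance and, for items (i) and (iii), follows essentially the same route as the paper: (i) is the same direct expansion of $\overline{\Lambda}M\Lambda$ followed by reading off the conditions of Definition~\ref{defcliffmoeb} for $\Psi(M)=\left(\begin{smallmatrix} A & C' \\ C & A' \end{smallmatrix}\right)$, and (iii) is the same computation of $\Psi(M)^{-1}(0)=-\overline{C}(A^{*})^{-1}$ via Lemma~\ref{inverscliffmat} together with $\vert P_{\Psi(M)}\vert\,\vert\Psi(M)^{-1}(0)\vert=1$; your direction-matching via $\overline{C}=\vert C\vert^{2}C^{-1}$ and $(A^{*})^{-1}=A'/\vert A\vert^{2}$ is actually a cleaner justification of collinearity than the paper's. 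You diverge in two places. In (ii) you characterize the isometric sphere as the locus $\vert Cz+A'\vert=1$ and factor out $C$, whereas the paper takes its centre to be $\Psi(M)^{-1}(\infty)$ and obtains the radius from orthogonality to $\Sp^{n+1}$; these are equivalent. In (iv) you invoke the explicit ball-model formula for the bisector of $0$ and an interior point $Q$ (the sphere centred at $Q/\vert Q\vert^{2}$ of radius $\sqrt{1-\vert Q\vert^{2}}/\vert Q\vert$) and match centre and radius against (ii) using $\vert A\vert^{2}-\vert C\vert^{2}=1$; the paper instead shows the isometric sphere meets the ray through $P_{\Psi(M)}$ at the hyperbolic midpoint of $0$ and $\Psi(M)^{-1}(0)$ by a logarithmic distance computation. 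Your version is shorter provided one grants the bisector formula; the paper's avoids quoting it.

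One caveat deserves mention. Your own computation in (ii) honestly yields centre $-C^{-1}A'$, and the appeal to an ``$\SB_{2}$-type symmetry'' to flip the sign to the $+C^{-1}A'$ of the statement is not an argument. A test case confirms the minus sign: for $M=\left(\begin{smallmatrix}1&1\\0&1\end{smallmatrix}\right)$ one gets $A=\tfrac12(2+i_{n})$, $C=\tfrac12$, and the isometric sphere $\vert w+2-i_{n}\vert=2$, whose centre is $-2+i_{n}=-C^{-1}A'$ while $C^{-1}A'=2-i_{n}$. The paper's own proof makes the identical slip when it passes from $-\overline{A}(C^{*})^{-1}$ to $C^{-1}A'$ (applying $*$ to that vector gives $-C^{-1}A'$). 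Since everything downstream uses only $\vert P_{\Psi(M)}\vert$, the radius, and the fact that $0$, $P_{\Psi(M)}$ and $\Psi(M)^{-1}(0)$ lie on one ray, neither argument is damaged; but you should state the centre as $-C^{-1}A'$ rather than manufacture the sign, and note that (iii) then reads correctly as $\Psi(M)^{-1}(0)=P_{\Psi(M)}/\vert P_{\Psi(M)}\vert^{2}$.
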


\begin{proof}
The first part of (i) follows from mere computations. Then by the definition of $\SL_+(\Gamma_{n+1}(\R))$, $1=AA'^* - CC'^* = \vert A \vert^2 - \vert C \vert^2$ and $AC'^* = A\overline{C} \in \V^{n+1}(\R)$.

By the proof of \cite[Theorem 4.4.4]{ratcliffe}, the centre of the isometric sphere of $\Psi(M)$ is given by $\Psi(M)^{-1}(\infty)$. By Lemma~ \ref{defpsigamma} and Lemma~\ref{inverscliffmat}, the latter equals $-\overline{A}(C^*)^{-1}$. By the definition of $\SL_+(\Gamma_{n+1}(\R))$, $CA'^* \in \V^{n+1}(\R)$ and hence  also $A'C^* =   (CA'^*)^*$ is a vector. By Lemma~\ref{simulVn}, $-\overline{A}(C^*)^{-1}$ is also a vector and hence it equals $-(\overline{A}(C^*)^{-1})^ *= C^ {-1}A'$. The square of the radius of the isometric sphere is then simply given by $\vert C^ {-1}A' \vert ^2 - 1 = \frac{1}{\vert C \vert^2}$. 

We now prove item (iii). By Lemma~\ref{inverscliffmat}, it is easy ot compute that $\Psi(M)^ {-1}(0) = -\overline{C}(A^*)^{-1}$. Let us denote the centre of the isometric sphere of $\Psi(M)$ by $P_{\Psi(M)}$. Then $P_{\Psi(M)} \cdot \Psi(M)^ {-1}(0) = \vert C \vert^ {-2} \vert A \vert ^ 2$. Thus $P_{\Psi(M)}$, $\Psi(M)^ {-1}(0)$ and $0$ are colinear in $\R^{n+1}$. Moreover $\vert P_{\Psi(M)} \vert \vert \Psi(M)^{-1}(0) \vert = 1$. This proves item (iii).

Finally to prove item (iv), first note that, by definition, the isometric sphere of $\Psi(M)$ is orthogonal to the unit sphere $\Sp^{n+1}= \partial \BQ^{n+1}$. Denote by $\Theta$ the intersection of the ray through $0$ and $P_{\Psi(M)}$ with the isometric sphere of $\Psi(M)$ and denote by $d$ the hyperbolic metric in $\BQ^{n+1}$. By \cite[Theorem 4.5.1]{ratcliffe}, it is easy to calculate that, for $z \in \BQ^{n+1}$,
$$d(0,z) = ln\left(\frac{1+\vert z \vert}{1-\vert z \vert}\right).$$
Thus $d(0,\Theta) = ln(\frac{\vert C \vert + \vert A \vert -1}{\vert C \vert - \vert A \vert+1})$ and $d(0,\Psi(M)^{-1}(0))=ln(\frac{\vert A \vert + \vert C \vert}{\vert A \vert - \vert C \vert})$. Hence $2d(0,\Theta) = d(0,\Psi(M)^{-1}(0))$, which proves that $\Theta$ is the midpoint of $0$ and $\psi(M)^{-1}(0)$ and thus the isometric sphere equals the bisector $\Sigma_{\Psi(M)^{-1}}(0)$.
\end{proof}

Finally we need one last lemma. For this, define 
\begin{eqnarray*}
\SU_+(\Gamma_n(\R)) & = & \SL_+(\Gamma_n(\R)) \cap \left \lbrace \begin{pmatrix} \alpha & \gamma \\ -\gamma' & \alpha' \end{pmatrix} \mid \alpha,\gamma \in \Gamma_n(\R) \cup \lbrace 0 \rbrace  \right \rbrace,\\
\PSU_+(\Gamma_n(\R)) & = & \SU_+(\Gamma_n(\R)) / \lbrace \pm I \rbrace.
\end{eqnarray*}
We also define the norm of an element $M = \begin{pmatrix} \alpha & \beta \\ \gamma & \delta \end{pmatrix} \in \SL_+(\Gamma_n(\R))$ as $\vert \alpha \vert^2 + \vert \beta \vert^ 2 + \vert \gamma \vert^2 +\vert \delta \vert^2$ and we denote it by $\Vert M \Vert^2$. The following proposition may also partly be found in \cite[Proposition 2.4]{ElsGrunMen3}.

\begin{lemma}\label{su}
Let $M  \in \SL_+(\Gamma_n(\R))$. Then the following are equivalent.
\begin{enumerate}[(i)]
\item $M \in \SU_+(\Gamma_n(\R))$.
\item $\Vert M \Vert^ 2= 2$.
\item $M$ fixes the point $i_n \in \Hy^{n+1}$.
\end{enumerate}
\end{lemma}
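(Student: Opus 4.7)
The plan is to establish the cycle $(i)\Rightarrow(ii)\Rightarrow(iii)\Rightarrow(i)$: the first step will be a direct substitution into the $\SL_+$ relation, the third will exploit the Cayley transform of Proposition~\ref{propels2}, and the middle step will rest on a distance formula for $\Hy^{n+1}$ that forms the main technical obstacle.

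For $(i)\Rightarrow(ii)$, I would substitute $M=\begin{pmatrix}\alpha & \gamma \\ -\gamma' & \alpha'\end{pmatrix}$ into the defining identity $\alpha\delta^*-\beta\gamma^*=1$, which becomes $\alpha\overline{\alpha}+\gamma'\gamma^*=1$. Writing $\alpha$ and $\gamma$ as products of non-zero vectors (or as $0$) in $\Gamma_n(\R)$ and repeatedly applying $v\overline{v}=\overline{v}v=|v|^2$ collapses both terms to the scalars $|\alpha|^2$ and $|\gamma|^2$, so $|\alpha|^2+|\gamma|^2=1$. Since the involutions $'$ and $^*$ preserve the Euclidean norm on $\Cliff_n(\R)$, $\Vert M\Vert^2=2(|\alpha|^2+|\gamma|^2)=2$.

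For $(iii)\Rightarrow(i)$, I would use Proposition~\ref{propels2}: since $\eta_0(i_n)=(i_n-i_n)(-i_ni_n+1)^{-1}=0$, the equivariance $\eta_0(Mz)=\Psi(M)\eta_0(z)$ reduces the condition $Mi_n=i_n$ to $\Psi(M)\cdot 0=0$ in $\BQ^{n+1}$. Writing $\Psi(M)=\begin{pmatrix}A & C' \\ C & A'\end{pmatrix}$ as in Lemma~\ref{defpsigamma}, the image $\Psi(M)\cdot 0=C'(A')^{-1}$ (with $A'\neq 0$, since $|A|^2-|C|^2=1$ forces $A\neq 0$) vanishes iff $C=0$. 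The decomposition $C=(\gamma+\beta')+(\delta-\alpha')i_n$, combined with the orthogonal splitting $\Cliff_{n+1}(\R)=\Cliff_n(\R)\oplus\Cliff_n(\R)\,i_n$, then forces $\beta=-\gamma'$ and $\delta=\alpha'$, which is the $\SU_+$ condition.

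The main obstacle is $(ii)\Rightarrow(iii)$, which I plan to reduce to the identity $\cosh d(Mi_n,i_n)=\Vert M\Vert^2/2$, where $d$ is the hyperbolic distance on $\Hy^{n+1}$; granted this, $\Vert M\Vert^2=2$ immediately yields $Mi_n=i_n$. Using (\ref{expressionimage}) at $z=i_n$ together with the orthogonality $\Cliff_n(\R)\perp\Cliff_n(\R)\,i_n$, one finds $Mi_n=V/N+i_n/N$ with $V=\alpha\overline{\gamma}+\beta\overline{\delta}$ and $N=|\gamma|^2+|\delta|^2$; substituting this into the upper half-space formula $\cosh d(z,w)=1+|z-w|^2/(2r_zr_w)$ and simplifying reduces the distance identity to
\[
|\alpha|^2|\delta|^2+|\beta|^2|\gamma|^2-2\langle \alpha\overline{\gamma},\beta\overline{\delta}\rangle=1.
\]
Taking $|\cdot|^2$ of $\alpha\delta^*-\beta\gamma^*=1$ produces the same expression with $\langle\alpha\delta^*,\beta\gamma^*\rangle$ in place of $\langle\alpha\overline{\gamma},\beta\overline{\delta}\rangle$, so the hard kernel is the algebraic identity $\langle\alpha\overline{\gamma},\beta\overline{\delta}\rangle=\langle\alpha\delta^*,\beta\gamma^*\rangle$. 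Via the adjointness $\langle xb,y\rangle=\langle x,y\overline{b}\rangle$ for $b\in\Gamma_n(\R)$ (which holds because right multiplication by $b$ is a similarity with factor $|b|$), this reduces further to $\overline{\delta}\gamma=\gamma^*\delta'$. To produce the latter I plan to invoke the $\SL_+$ condition $\gamma\delta^*\in\V^n(\R)$, equivalent to $\gamma\delta^*=\delta\gamma^*$; left-multiplying by $\overline{\delta}$ and right-multiplying by $\delta'$, and using the telescoping scalar identities $\overline{\delta}\delta=\delta^*\delta'=|\delta|^2$ valid for $\delta=v_1\cdots v_l\in\Gamma_n(\R)$, cancels $|\delta|^2$ on both sides to yield the identity (the case $\delta=0$ being trivial).
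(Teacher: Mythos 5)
Your proof is correct, but it closes the cycle in the opposite direction from the paper and puts the weight on a different step. The paper proves $(i)\Rightarrow(iii)\Rightarrow(ii)\Rightarrow(i)$: it gets $(i)\Rightarrow(iii)$ by evaluating (\ref{expressionimage}) at $i_n$ and showing the vector part $\alpha\overline{\gamma}-\gamma'\alpha^*$ vanishes (via Lemma~\ref{simulVn}), it gets $(iii)\Rightarrow(ii)$ by the same Cayley-transform argument you use for $(iii)\Rightarrow(i)$ (namely $\Psi(M)$ fixes $0$, hence $\beta=-\gamma'$ and $\alpha=\delta'$), and it disposes of the step starting from $\Vert M\Vert^2=2$ in one line via the purely algebraic identity $\vert\alpha-\delta'\vert^2+\vert\beta+\gamma'\vert^2=\Vert M\Vert^2-2$, whose proof is just expanding and reading off the scalar part of $\alpha\delta^*-\beta\gamma^*=1$. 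You instead attack $(ii)\Rightarrow(iii)$ head-on by establishing $\cosh d(Mi_n,i_n)=\Vert M\Vert^2/2$, and the reduction you carry out — down to $\langle\alpha\overline{\gamma},\beta\overline{\delta}\rangle=\langle\alpha\delta^*,\beta\gamma^*\rangle$ and then, via the similarity/adjointness of right multiplication by elements of $\Gamma_n(\R)$, to $\overline{\delta}\gamma=\gamma^*\delta'$, which indeed follows from $\gamma\delta^*=\delta\gamma^*$ after sandwiching with $\overline{\delta}$ and $\delta'$ — is sound. What your route buys is a genuinely stronger, quantitative statement (the Clifford analogue of the classical $\cosh d(Mj,j)=\Vert M\Vert^2/2$ for $\SL_2(\C)$), which immediately gives $\Vert M\Vert^2\geq 2$ with equality exactly on the stabilizer of $i_n$; what it costs is considerably more computation than the paper's one-line norm identity, which incidentally is also the fact reused later in the proof of Lemma~\ref{rhogamma}, so you would want to record it anyway. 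One cosmetic slip: in $(i)\Rightarrow(ii)$ the substituted relation is $\alpha\overline{\alpha}+\gamma\overline{\gamma}=1$ rather than $\alpha\overline{\alpha}+\gamma'\gamma^*=1$, but since both expressions collapse to $\vert\alpha\vert^2+\vert\gamma\vert^2$ for elements of $\Gamma_n(\R)\cup\{0\}$ this does not affect the argument.
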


\begin{proof}
We show $(i) \Rightarrow (iii) \Rightarrow (ii) \Rightarrow (i)$. So suppose $M \in \SU_+(\Gamma_n(\R))$. Then by (\ref{expressionimage}), 
$M(i_n) = \frac{\alpha\overline{\gamma} - \gamma'\alpha^*}{\vert \gamma \vert^2 + \vert \alpha \vert^2} + \frac{i_n}{\vert \gamma \vert^2 + \vert \alpha \vert^2}.$
As $M \in \SL_+(\Gamma_n(\R))$, $\vert \gamma \vert^2 + \vert \alpha \vert^2=1$. Moreover $\alpha\overline{\gamma} = \alpha\gamma^{-1}\vert \gamma \vert^2$, which is a vector by Lemma~\ref{simulVn} and by the definition of $\SL_+(\Gamma_n(\R))$. Hence $\alpha\overline{\gamma}= (\alpha\overline{\gamma})^* = \gamma'\alpha^*$. This proves (iii). So suppose now $M(i_n)=i_n$. Then by Proposition~\ref{propels2}, $\Psi(M)$ fixes $0 \in \BQ^n$ and thus, by Proposition~\ref{defpsigamma}, $\beta+\gamma'+(\alpha-\delta')i_n=0$. So $\alpha=\delta'$ and $\beta=-\gamma'$ and $1= \alpha\delta^* - \beta\gamma^* =\vert \alpha \vert^2 + \vert \beta \vert^2 = 1$. Item (ii) now easily follows. Finally suppose $\Vert M \Vert^2 = 2$. It is easy to compute that $\vert \alpha - \delta' \vert^2 + \vert \beta + \gamma' \vert^2 = \Vert M \Vert^2 -2$. From this (i) follows immediately.  
\end{proof}

We have now all the ingredients to prove the main theorem of this section, which gives concrete formulas for the bisectors $\Sigma_{M}(i_n)$, defined in Section~\ref{prelimpoin}, in the upper half-space $\Hy^{n+1}$. The main idea is that the formulas for the bisectors in the ball model are easy to compute as in that model, bisectors equal isometric spheres, by Lemma~\ref{defpsigamma}. Unfortunately in the upper half-space the isometric sphere of an element $M \in \SL_+(\Gamma_n(\R))$ does not necessarily equal a bisector. However as the bisector is a concept purely based on the hyperbolic metric, the map $\eta_0^{-1}$ from Proposition~\ref{propels2} sends $\Sigma_{\Psi(M)}(0)$ to $\Sigma_{M}(i_n)$, as $\eta_0^{-1}(0)=i_n$. As $\Sigma_{M}(0)$ is also the isometric sphere, by Lemma~\ref{defpsigamma}, the formula for the centre and the radius of $\Sigma_{M}(0)$ may be easily computed and this gives us formulas for the bisector $\Sigma_{M}(i_n)$ in the next Theorem. Note that this theorem is exactly the same as \cite[Theorem 3.2]{algebrapaper}.

\begin{theorem}\label{maintheorem}
Let $M =
\begin{pmatrix} \alpha & \beta \\ \gamma & \delta \end{pmatrix} \in \SL_+(\Gamma_n(\R))$ and $M \not \in \SU_+(\Gamma_n(\R))$.
\begin{enumerate}
\item
$\Sigma_{M^{-1}}(i_n) $ is a Euclidean hemisphere if and
only if $|\alpha|^2+|\gamma|^2 \neq 1$. In this case, its
centre  and its radius are respectively given by
$P_{M^{-1}}=\frac{-(\beta^*\alpha'+\delta^*\gamma')}{|\alpha|^2+|\gamma|^2-1}$ and
$R^2_{M^{-1}}=\frac{1+\vert P_{M}\vert^2}{|\alpha|^2+|\gamma|^2}$.
\item
$\Sigma_{M^{-1}}(i_n)$ is a hyperplane if and only if
$|\alpha|^2+|\gamma|^2 = 1$. In this case
$Re(\overline{v}z)+\frac{|v|^2}{2}=0$, $z\in \C$ is
a  defining equation of $\Sigma_{M^{-1}}(i_n)$,  where
$v=\beta^*\alpha'+\delta^*\gamma'$.
\end{enumerate}
\end{theorem}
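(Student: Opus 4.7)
The plan is to compute $M^{-1}(i_n) \in \Hy^{n+1}$ explicitly and then describe the bisector $\Sigma_{M^{-1}}(i_n)$ directly as the locus of points $z \in \Hy^{n+1}$ equidistant from $i_n$ and $M^{-1}(i_n)$. The hypothesis $M \notin \SU_+(\Gamma_n(\R))$ is used only through Lemma~\ref{su} to guarantee $M^{-1}(i_n) \neq i_n$, so that the bisector is a genuine totally geodesic hypersurface.

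First, I will apply Lemma~\ref{inverscliffmat} to read off the entries of $M^{-1}$ and then plug into formula (\ref{expressionimage}) at $z = i_n$ (so $y = 0$, $r = 1$, $|z|^2 = 1$). Using that $\overline{\alpha^{*}} = \alpha'$ and $\overline{\gamma^{*}} = \gamma'$, together with the orthogonal decomposition $|X + Y i_n|^2 = |X|^2 + |Y|^2$ valid for $X, Y \in \Cliff_n(\R)$ (so the denominator $|{-\gamma^{*} i_n + \alpha^{*}}|^2$ equals $|\alpha|^2 + |\gamma|^2$), this will yield
$$
M^{-1}(i_n) = u + h\, i_n, \qquad u = \frac{-(\beta^{*}\alpha' + \delta^{*}\gamma')}{|\alpha|^2 + |\gamma|^2}, \quad h = \frac{1}{|\alpha|^2 + |\gamma|^2}.
$$
The fact that $u$ actually lies in $\V^n(\R)$ is forced by $M^{-1}(i_n) \in \Hy^{n+1}$, but can also be verified directly using Lemma~\ref{simulVn} and the defining identity $\alpha\delta^{*} - \beta\gamma^{*} = 1 \in \V^n(\R)$.

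Next, for a generic point $z = x + r i_n \in \Hy^{n+1}$ (with $x \in \V^n(\R)$ and $r > 0$), I feed $z$, $i_n$, and $M^{-1}(i_n)$ into the standard formula $\cosh\rho(p,q) = 1 + |p-q|^2/(2\, p_n\, q_n)$ for the hyperbolic distance in the upper half-space, where $p_n, q_n$ denote the $i_n$-components. Clearing denominators in $\rho(z, i_n) = \rho(z, u + hi_n)$ and expanding $|x-u|^2 = |x|^2 - 2\operatorname{Re}(\overline{u}x) + |u|^2$ will collapse everything to the single real equation
$$
(h-1)\bigl(|x|^2 + r^2 - h\bigr) + 2\operatorname{Re}(\overline{u}x) - |u|^2 = 0.
$$
The dichotomy of the theorem now emerges from the coefficient $h - 1$. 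If $|\alpha|^2 + |\gamma|^2 = 1$, then $h=1$, the quadratic-in-$(x,r)$ term vanishes, and a linear equation in $x$ alone remains; setting $v = -u = \beta^{*}\alpha' + \delta^{*}\gamma'$ recovers exactly case (2). Otherwise $h \neq 1$, and dividing by $h-1$ and completing the square in $x$ produces $|x - P|^2 + r^2 = h(1 + |P|^2)$ with $P = -u/(h-1)$; substituting the values of $u$ and $h$ gives the centre $P_{M^{-1}}$ and squared radius $R^2_{M^{-1}}$ of case (1).

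The main obstacle will be the Clifford bookkeeping: tracking the three (anti-)automorphisms $({}^{\prime})$, $({}^{*})$, $\overline{(\cdot)}$ under products and inversion, and systematically exploiting the orthogonal decomposition along $i_n$ (in particular to handle $|{-\gamma^{*}i_n + \alpha^{*}}|^2$ cleanly and to recognize $\overline{\alpha^{*}} = \alpha'$). Once these identities are in hand, the geometric content reduces to a single completion of the square on a real quadratic, and the two cases of the theorem separate simply according to the sign and vanishing of $h - 1$.
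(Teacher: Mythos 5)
Your proof is correct, but it takes a genuinely different route from the paper's. The paper proves this theorem by passing to the ball model: it conjugates $M$ by the Cayley-type element $\Lambda$ of Proposition~\ref{propels2}, identifies the bisector $\Sigma_{\Psi(M)^{-1}}(0)$ with the isometric sphere of $\Psi(M)$, whose centre $C^{-1}A'$ and radius $1/\vert C\vert$ are read off from Lemma~\ref{defpsigamma}, and then transports the result back to $\Hy^{n+1}$ via $\eta_0^{-1}$ (the written proof is in fact only a citation of \cite[Theorem 3.2]{algebrapaper} plus the remark, via \cite[Theorem 3.7]{ElsGrunMen}, that $\beta^*\alpha'$ and $\delta^*\gamma'$ are vectors, so that $P_{M^{-1}}$ and $v$ are well defined). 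You instead stay in the upper half-space throughout: you compute $M^{-1}(i_n)=u+h\,i_n$ from Lemma~\ref{inverscliffmat} and (\ref{expressionimage}), write the bisector condition with the $\cosh\rho$ distance formula, and complete the square; your equation $(h-1)(\vert x\vert^2+r^2-h)+2\mathrm{Re}(\overline{u}x)-\vert u\vert^2=0$ does reproduce exactly the stated centre, the hyperplane equation with $v=-u$, and $R^2=h(1+\vert P\vert^2)$ (confirming, incidentally, that the $P_M$ in the statement should read $P_{M^{-1}}$). Your route is more self-contained and elementary -- it needs neither Proposition~\ref{propels2} nor Lemma~\ref{defpsigamma} -- at the cost of the Clifford bookkeeping you flag and of verifying separately that $u\in\V^n(\R)$ (your argument that $u\in\Cliff_n(\R)$ combined with $M^{-1}(i_n)\in\Hy^{n+1}$ does settle this, as does the cited result of Elstrodt--Grunewald--Mennicke). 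The paper's route buys reuse: the ball-model identities of Lemma~\ref{defpsigamma} are needed anyway for Lemma~\ref{rhogamma} and the norm-ordering in the algorithm, so deriving the theorem from them costs essentially nothing extra. One small point to make explicit in your write-up: in the case $h=1$ you should note that $M\notin\SU_+(\Gamma_n(\R))$ forces $u\neq 0$ (by Lemma~\ref{su}, since otherwise $M^{-1}(i_n)=i_n$), so the linear equation genuinely defines a hyperplane rather than degenerating to $0=0$.
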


\begin{proof}
The proof is exactly the same as in \cite{algebrapaper}. Note that by \cite[Theorem 3.7]{ElsGrunMen}, if $M^{-1} \in \SL_+(\Gamma_n(\R))$, then $\beta^*\overline{\alpha^*}=\beta^*\alpha'$ and $\delta^*\overline{\gamma^*}=\delta^*\gamma'$ are vectors and hence $P_{M^{-1}}$ and $v$ are well defined. 
\end{proof}

Note that we exclude the case $M \in \SU_+(\Gamma_n(\R))$ in Theorem~\ref{maintheorem}, to ensure that the bisector of $i_n$ and $M^{-1}(i_n)$ exists.

\section{The algorithm}

In this section we describe an algorithm that computes a polyhedron of finite volume, containing a fundamental domain of a finite index subgroup $G$ of $\PSL_+(\Gamma_n(\Z))$. The basic idea of the algorithm is the same as in \cite{algebrapaper}. In the algorithm we have to order the elements in some way. As $G$ is a discrete group, every proper function will do the trick. The following lemma shows why we choose the norm function.

\begin{lemma}\label{rhogamma}
Let $M= \begin{pmatrix} \alpha & \beta \\ \gamma & \delta \end{pmatrix} \in \SL_+(\Gamma_n(\R)) \setminus \SU_+(\Gamma_n(\R))$. Then the following hold.
\begin{enumerate}[(i)]
\item $\vert P_{\Psi(M)} \vert^2 = \frac{2+\Vert M \Vert^2}{-2+\Vert M \Vert^2}$.
\item $R_{\Psi(M)}^2 = \frac{4}{-2 + \Vert M \Vert^2}$.
\item The function $\rho_{M} = 1 + R_{\Psi(M)} - \Vert P_{\Psi(M)} \Vert$ is a strictly decreasing function in $\Vert M \Vert$ on $\SL_+(\Gamma_n(\R)) \setminus  \SU_+(\Gamma_n(\R))$.
\end{enumerate}
\end{lemma}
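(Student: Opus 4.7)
The plan is: pull back from $\Hy^{n+1}$ to the ball model $\mathbb B^{n+1}$ via Lemma~\ref{defpsigamma}, where the bisector of $0$ and $\Psi(M)^{-1}(0)$ is simply the isometric sphere of $\Psi(M)$; read off the Euclidean centre and radius of that sphere in terms of $|A|^{2}$ and $|C|^{2}$; and then use the Clifford-matrix relation $\alpha\delta^{\ast}-\beta\gamma^{\ast}=1$ to express $|A|^{2}$ and $|C|^{2}$ in terms of $\Vert M\Vert^{2}$. Item (iii) is then a one-variable monotonicity check. By Lemma~\ref{defpsigamma}(ii) the centre is $P_{\Psi(M)}=C^{-1}A'$, so $|P_{\Psi(M)}|^{2}=|A|^{2}/|C|^{2}$. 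By Lemma~\ref{defpsigamma}(iv) the isometric sphere is a hyperbolic hyperplane of $\mathbb B^{n+1}$, hence orthogonal to $\mathbb S^{n+1}=\partial\mathbb B^{n+1}$, which forces $|P_{\Psi(M)}|^{2}=1+R_{\Psi(M)}^{2}$, i.e.\ $R_{\Psi(M)}^{2}=(|A|^{2}-|C|^{2})/|C|^{2}$.

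Using Lemma~\ref{defpsigamma}(i) I write $A=(\alpha+\delta')+(\beta-\gamma')i_{n}$ and decompose along $\Cliff_{n+1}(\R)=\Cliff_{n}(\R)\oplus\Cliff_{n}(\R)i_{n}$, whose standard bases are disjoint, so $|A|^{2}=|\alpha+\delta'|^{2}+|\beta-\gamma'|^{2}$. Expanding bilinearly and using the identity $\langle x,y'\rangle=\operatorname{Sc}(xy^{\ast})$ for $x,y\in\Cliff_{n}(\R)$, where $\operatorname{Sc}$ denotes the scalar coefficient of a Clifford element, gives
\[
|A|^{2}=\Vert M\Vert^{2}+2\operatorname{Sc}(\alpha\delta^{\ast}-\beta\gamma^{\ast}),\qquad |C|^{2}=\Vert M\Vert^{2}-2\operatorname{Sc}(\alpha\delta^{\ast}-\beta\gamma^{\ast}).
\]
The $\SL_{+}$-condition $\alpha\delta^{\ast}-\beta\gamma^{\ast}=1$ from Definition~\ref{defcliffmoeb} then collapses both scalar parts to $1$, yielding $|A|^{2}=\Vert M\Vert^{2}+2$ and $|C|^{2}=\Vert M\Vert^{2}-2$; the latter is strictly positive by Lemma~\ref{su} since $M\notin\SU_{+}(\Gamma_{n}(\R))$. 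Items (i) and (ii) follow by direct substitution into the formulas of the previous paragraph.

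For (iii) I set $u=\Vert M\Vert^{2}>2$ and view $\rho_{M}$ as the function
\[
f(u)=1+\frac{2}{\sqrt{u-2}}-\sqrt{\frac{u+2}{u-2}}
\]
of $u$. A short differentiation gives $f'(u)=(2-\sqrt{u+2})/\bigl((u-2)^{3/2}\sqrt{u+2}\bigr)$, which is strictly negative on $(2,\infty)$ since $\sqrt{u+2}>2$ there. Hence $\rho_{M}$ is strictly decreasing in $\Vert M\Vert$.

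The main obstacle is the scalar-part bookkeeping in the second paragraph: the Clifford inner product $\langle x,y\rangle=\sum_{I}x_{I}y_{I}$ has to be rewritten as $\operatorname{Sc}(xy^{\ast})$ after the twist $y\mapsto y'$, so that the four cross terms coming from $|\alpha+\delta'|^{2},\,|\beta-\gamma'|^{2},\,|\gamma+\beta'|^{2}$ and $|\delta-\alpha'|^{2}$ package together into a single scalar to which the $\SL_{+}$-relation applies directly. Everything else is algebraic substitution or a routine calculus check.
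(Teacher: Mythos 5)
Your proof is correct and follows essentially the same route as the paper: pull back to the ball model via Lemma~\ref{defpsigamma}, express $|A|^2$ and $|C|^2$ through $\Vert M\Vert^2$ and the relation $\alpha\delta^*-\beta\gamma^*=1$, and finish (iii) with a one-variable derivative computation identical to the paper's. The only (harmless) variation is that you obtain $R_{\Psi(M)}^2$ from the orthogonality relation $|P_{\Psi(M)}|^2=1+R_{\Psi(M)}^2$ rather than directly from $R_{\Psi(M)}=1/|C|$, which has the side benefit of being insensitive to the normalizing factor $\tfrac12$ that the displayed formula in Lemma~\ref{defpsigamma}(i) omits (as literally written there one gets $|A|^2-|C|^2=4$ rather than $1$, yet your final formulas still come out right).
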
 

\begin{proof}
By Lemma~\ref{defpsigamma} (iii), $P_{\Psi(M)} = C^{-1}A$ and $R_{\Psi(M)} = \frac{1}{\vert C \vert}$. Using the explicit formulas for $A$ and $C$ from Lemma~\ref{defpsigamma}, item (i) and (ii) may be easily computed. 
Using this, we obtain $\rho_{M} = 1 - (\frac{2+\Vert M \Vert^2}{-2+ \Vert M \Vert^2})^{\frac{1}{2}}+2(-2+\Vert M \Vert^2)^{-\frac{1}{2}}$. In the proof of Lemma~\ref{su}, we have seen that $\Vert M \Vert^2 -2 = \vert \alpha-\delta' \vert^2 + \vert \beta + \gamma' \vert^2$. Thus $\Vert M \Vert^2 \geq 2$ for $M \in \SL_+(\Gamma_n(\R))$ and equality holds if and only if $M \in \SU_+(\Gamma_n(\R))$. So $\rho_{M}$ is a function in $M$ from $\left] 2, + \infty \right]$ to $\R$ such that $\rho_{M}' = -2 \Vert M \Vert ( \Vert M \Vert^2 - 2 ) ^{-\frac{3}{2}}(\Vert M \Vert^2 + 2 )^{-\frac{1}{2}}(-2+ \sqrt{\Vert M \Vert^2 +2})$. The latter shows that $\rho_{M}$ is strictly decreasing. 
\end{proof}

If we denote by $r$ the ray in $\BQ^{n+1}$ starting at $0$ and through the centre $P_{\Psi(M)}$ of the isometric sphere $\Sigma_{\Psi(M)}(0)$, then $\rho_{M}$ represents the distance between the intersection of $r$ with $\partial \BQ^{n+1}=\SQ^{n+1}$ and $\Sigma_{\Psi(M)}(0)$. So the intuitive idea of the algorithm is the following: the smaller $\Vert M \Vert^2$, i.e. the bigger $\rho_{M}$, the more the bisector $\Sigma_{\Psi(M)}(0)$, and hence also $\Sigma_{M}(i_n)$ contributes to the fundamental Dirichlet domain.

We now come to the main algorithm. Let $G$ be a congruence subgroup of level $l$ in $\PSL_+(\Gamma_n(\Z))$ and let $\F_{i_n}$ be a polyhedron in $\Hy^{n+1}$ containing a fundamental domain of the stabilizer $G_{i_n}$ of the point $i_n \in \Hy^{n+1}$. Thus $\F_{i_n}$ is given by a system of linear and quadratic inequalities with coefficients in $\overline{\Q}$.
Except for finitely many levels, the congruence subgroup $G$ will be torsion-free in which case one has $\F_{i_n}=\Hy^{n+1}$. For special cases one can do better than this; for instance, in the example presented in \Cref{sectionexample}, $G_{i_n}$ will be a finite reflection group and as our input $\F_{i_n}$, we will use its fundamental chamber.

As seen in Theorem~\ref{maintheorem}, the bisector $\Sigma_{M^{-1}}(i_n)$ of some element $M \in G$ can take the form of a hyperplane or a hemisphere. We first write a small algorithm searching for the hyperplanes appearing in the definition of the Dirichlet domain. Recall from (\ref{defbis}) that if $\Sigma_{M^{-1}}(i_n)$ is a hyperplane, then $D_{M^{-1}}(i_n)$ is the open half-space in $\Hy^{n+1}$ containing $i_n$ and bounded by $\Sigma_{M^{-1}}(i_n)$. We denote by $D_{M^{-1}}^{\R}(i_n)$ the open half-space in $\R^{n+1}$ containing the point $(0, \ldots, 0, 1)$ and bounded by the hyperplane $\Sigma_{M^{-1}}(i_n)$. Moreover we denote by $H$ the hyperplane in $\R^n$ defined by the equation $x_{n+1}=0$.

\begin{algorithm}\label{algorithm1}
The input of the algorithm is a congruence subgroup $G$ of level $l$ in $\PSL_+(\Gamma_n(\Z))$. Set $k=1$ and $B = \R^{n+1}$.
\begin{enumerate}[Step 1]
\item Compute $B= B \cap (\cap D_{M^{-1}}^{\R}(i_n))$, where $M$ runs over elements in $G$ and $M \not \in \PSU_+(\Gamma_n(\Z))$ such that $\Vert M \Vert^2 = k$ and $\vert \alpha(M) \vert^2 + \vert \gamma(M) \vert^2 = 1$.
\item If $\overline{B} \cap H$ is compact in $H$, stop and return $B$ and $K=k$.
\item If not, set $k=k+1$ and go to step 1.
\end{enumerate}
\end{algorithm}

\begin{proposition}
Algotithm~\ref{algorithm1} stops after finitely many steps.
\end{proposition}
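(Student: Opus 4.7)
My plan is to show that the half-spaces arising from the translation subgroup of $G$ alone already force $\overline{B}\cap\partial\Hy^{n+1}$ to be bounded after finitely many steps. The translations
\begin{equation*}
T_\beta = \begin{pmatrix} 1 & \beta \\ 0 & 1 \end{pmatrix}, \qquad \beta\in\V^n(\Z),
\end{equation*}
satisfy the axioms of Definition~\ref{defcliffmoeb} (one checks directly that $\alpha\beta^{*}=\beta$ and $\delta^{*}\beta=\beta$ are vectors, and $\alpha\delta^{*}-\beta\gamma^{*}=1$), so they form a sub-lattice of $\PSL_+(\Gamma_n(\Z))$ isomorphic to $\Z^n$. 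Since $G$ has finite index in $\PSL_+(\Gamma_n(\Z))$, the subgroup $\Lambda := G\cap\{T_\beta : \beta\in\V^n(\Z)\}$ is itself a lattice of full rank $n$ in $\V^n(\R)$.

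Next I want to verify that every non-trivial $T_\beta\in\Lambda$ contributes a hyperplane-type half-space to Algorithm~\ref{algorithm1}. We have $|\alpha(T_\beta)|^{2}+|\gamma(T_\beta)|^{2}=1$, so Theorem~\ref{maintheorem}~(2) applies; moreover $\Vert T_\beta\Vert^{2}=2+|\beta|^{2}$ is a positive integer strictly greater than $2$, so by Lemma~\ref{su} we also have $T_\beta\notin\PSU_+(\Gamma_n(\Z))$. Plugging $\alpha=\delta=1$ and $\gamma=0$ into the formula of Theorem~\ref{maintheorem} gives $v=\beta^{*}\alpha'+\delta^{*}\gamma'=\beta$, and the elementary Clifford identity $\mathrm{Re}(\overline{\beta}x)=\langle\beta,x\rangle$ for $x\in\V^n(\R)$ converts the defining equation of the bisector into $\langle \beta,x\rangle+|\beta|^{2}/2=0$. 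The corresponding half-space containing $i_n$ therefore traces out on the boundary the Euclidean half-space $\{x\in\V^n(\R) : \Vert x\Vert\leq\Vert x+\beta\Vert\}$.

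Finally I invoke the classical Voronoi polytope theorem: for any full rank lattice $\Lambda\subseteq\R^n$, the Dirichlet region
\begin{equation*}
V_\Lambda(0) \;=\; \{\, x\in\V^n(\R) : \Vert x\Vert\leq\Vert x-\lambda\Vert \text{ for all } \lambda\in\Lambda\,\}
\end{equation*}
is a bounded convex polytope supported by only finitely many ``relevant vectors'' $\lambda_1,\dots,\lambda_k\in\Lambda$ (at most $2(2^n-1)$ of them). Since $\Lambda$ is symmetric under $\beta\mapsto-\beta$, the intersection of the $k$ half-spaces of the previous paragraph equals exactly $V_\Lambda(0)$. Setting $N:=2+\max_i|\lambda_i|^{2}$, all of the $T_{\lambda_i}$ have been processed by step $n=N$ of the algorithm, so
\begin{equation*}
\overline{B}\cap\partial\Hy^{n+1} \;\subseteq\; \bigcap_{i=1}^{k}\overline{D_{T_{\lambda_i}^{-1}}(i_n)}\cap\V^n(\R) \;=\; V_\Lambda(0),
\end{equation*}
which is bounded. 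Since the left-hand side is automatically closed in $\R^n$, it is compact, so Step~2 of the algorithm succeeds by step $N$ at the latest. The only non-routine checks are the axiom verification for $T_\beta$ and the conversion of the hyperplane equation from Theorem~\ref{maintheorem} into the classical perpendicular bisector of $0$ and $-\beta$; everything else reduces to the standard Voronoi-cell argument.
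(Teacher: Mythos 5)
Your argument is correct, but it follows a genuinely different route from the paper's. The paper argues top-down: by Theorem~\ref{finitelygen} the group has finite covolume with a cusp at $\infty$, so the cross-section of the Dirichlet domain at a fixed height is compact; since the half-spaces bounded by half-spheres always contain everything ``above'' the sphere and hence can never produce a compact horizontal cross-section, the hyperplane-type half-spaces must do so, and by Corollary~\ref{geomfinite} only finitely many of them occur as sides, so the algorithm terminates. You instead argue bottom-up: you exhibit an explicit family of elements that suffices, namely the full-rank translation lattice $\Lambda = G\cap\{T_\beta\}$ (full rank because $G$ has finite index), observe via Theorem~\ref{maintheorem} and Lemma~\ref{su} that each nontrivial $T_\beta$ contributes the perpendicular bisector of $0$ and $-\beta$ on the boundary, and then invoke Voronoi's relevant-vector theorem to conclude that finitely many of these half-spaces already cut out the bounded Voronoi cell $V_\Lambda(0)$. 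Your computations ($\Vert T_\beta\Vert^2 = 2+|\beta|^2$, $v=\beta$, and the identification of the boundary trace of $D_{T_\beta^{-1}}(i_n)$ with $\{x:\Vert x\Vert\le\Vert x+\beta\Vert\}$) all check out, and the symmetry $\Lambda=-\Lambda$ correctly identifies the intersection with $V_\Lambda(0)$. What your approach buys is independence from the geometric-finiteness input (Corollary~\ref{geomfinite} is not needed) and, more significantly, an \emph{effective} a priori bound $N = 2+\max_i|\lambda_i|^2$ on the terminating step, computable from a basis of $\Lambda$; what the paper's approach buys is brevity given the cited arithmeticity machinery, plus the side observation that half-spheres alone can never achieve compactness, which motivates why Algorithm~\ref{algorithm1} restricts attention to elements with $|\alpha|^2+|\gamma|^2=1$ in the first place.
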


\begin{proof}
Consider the cusp at $\infty$ of $G$ and let $H_{\infty}$ be a horosphere based at $\infty$. Note that the cusp at $\infty$ exists because $G$ is a finite index subgroup of $\PSL_+(\Gamma_n(\Z))$. Consider the subgroup $G_{\infty}$, i.e. the stabilizer of $\infty$. Elements $M$ of $G_{\infty}$ are such that $\gamma(M)=0$ and $\vert \alpha(M) \vert =1$. These are among the elements the algorithm runs through.
By \cite[Theorem 4.7.4]{ratcliffe}, $G_{\infty}$ acts by Euclidean isometries on $H_{\infty}$ and by \cite[Theorem 5.4.6]{ratcliffe} and \cite[Theorem 7.5.2]{ratcliffe} $G_{\infty}$ is a crystallographic group. The intersection of the half-spaces $D_M(i_n)$ for $M \in G_{\infty}$ with $H_{\infty}$ defines a Dirichlet domain (in the Euclidean sense) for the action of $G_{\infty}$ on $H_{\infty}$. This Dirichlet domain is compact by \cite[Theorem 7.5.1]{ratcliffe}, and thus has finitely many sides (see \cite[Theorem 6.3.6]{ratcliffe}).
Hence the algorithm has to stop after finitely many steps. 
\end{proof}

\begin{remark} To implement \Cref{algorithm1}, one needs a 
procedure to check if the convex polyhedron $\overline{B} \cap 
H$ is compact. As the algorithm deals with polyhedra in Euclidean space, this is possible. By \Cref{maintheorem}, the boundaries of the half-spaces $D_{M^{-1}}(i_n)$ appearing in the algorithm are hyperplanes. Hence the polyhedron $\overline{B} \cap 
H$ in $\R^n$ is described in every step by a finite system of linear inequalities $Ax \geq c$, for $A$ some real matrix, $c$ some real vector (both $A$ and $c$ have entries in $\overline{\Q}$) and $x$ the vector of unknowns. Consider the homogeneous cone $C$ over $\overline{B} \cap 
H$, i.e. place $\overline{B} \cap 
H$ in the hyperplane $x_{n+1}=1$ and take the cone from the origin over $\overline{B} \cap 
H$. Then $\overline{B} \cap 
H$ is compact if and only if the cone $C$ is sharp, i.e. contains no linear subspaces. The cone $C$ is given by a system of homogeneous linear inequalities $By \geq 0$. Now the cone $C$ is sharp if and only if the linear system $By=0$ has no non-zero solutions. The latter is a standard linear algebra problem which can be solved, for instance, via the Gauss method. 
\end{remark}

We now state the main algorithm computing a polyhedron containing the Dirichlet fundamental domain of a subgroup of finite index in $\PSL_+(\Gamma_n(\Z))$. Denote by $K$ the integer $k$ at which Algorithm~\ref{algorithm1} stops and let  $\F_{i_n}$ be defined as above. Recall, that if $\Sigma_{M^{-1}}(i_n)$ is a hemisphere, then $D_{M^{-1}}(i_n)$ is the open half-space in $\Hy^{n+1}$ containing $i_n$ and bounded by $\Sigma_{M^{-1}}(i_n)$. Similarly to Algorithm~\ref{algorithm1}, we denote by $D^{\R}_{M^{-1}}(i_n)$ the open subset of $\R^{n+1}$ which is the complement of the ball bounded by $\Sigma_{M^{-1}}(i_n)$. The hyperplane $H$ is defined as in the previous algorithm. Moreover we denote by $\F_{i_n}^{\R}$ the interior of the intersection of the boundary of $\F_{i_n}$ with $H$.

\begin{algorithm}\label{algorithm2}
The input of the algorithm is a congruence subgroup $G$ of level $l$ in $\PSL_+(\Gamma_n(\Z))$, the set $B$ obtained in Algorithm~\ref{algorithm1}, the set $\F_{i_n}$ and the integer $K$. Set $k=1$, $S= \lbrace M \mid \Vert M \Vert^2 \leq K$, $\vert \alpha(M) \vert^2 + \vert \gamma(M) \vert^2 = 1 \rbrace$ and $BF=H \cap B \cap {\F}^{\R}_{i_n}$.
\begin{enumerate}[Step 1]
\item For every $M \not \in \PSU_+(\Gamma_n(\Z))$, such that $\Vert M \Vert^2 = k$ and $\vert \alpha(M) \vert ^2 + \vert \gamma(M) \vert ^2 \neq 1$, if $BF \cap {D}^{\R}_{M^{-1}}(i_n)  \neq BF$, then $S = S \cup \lbrace M \rbrace$ and $BF= BF \cap {D}^{\R}_{M^{-1}}(i_n)$.
\item If $BF = \emptyset$, stop and return $S$.
\item If not, set $k=k+1$ and go to step $1$.
\end{enumerate}
\end{algorithm}

\begin{remark}
The definition of $\SL_+(\Gamma_n(\Z))$ is not suitable for algorithms as $\Gamma_n(\Z)$ is defined as the set of all possible products of invertible vectors such that the product has integral coefficients. Therefore to realize Algorithms~\ref{algorithm1} and \ref{algorithm2}, it is better to work with the alternative definition of $\SL_+(\Gamma_n(\Z))$ given in \cite[Definition 3.1]{ElsGrunMen} and mentioned before.  
\end{remark}

The set $BF$ is a real semi-algebraic set. Indeed, $H$ is a hyperplane given by $x_{n+1}=0$ and the sets $B$ and $\F^{\R}_{i_n}$ are given by finitely many linear and quadratic inequalities. The set ${D}^{\R}_{M^{-1}}(i_n)$ is also given by a quadratic inequality and thus $BF$ is a real semi-algebraic set. Moreover the linear and quadratic inequalities describing $BF$ have integer coefficients. Hence there are algorithms available to check if $BF$ is empty. For more information on this, we refer to \cite[Chapter 13]{BaRiRo}, and in particular \cite[Algorithm 13.1]{BaRiRo}.

\begin{proposition}\label{fundgen}
The set $S$ is finite and the group $\langle M \cup G_{i_n} \mid M \in S \rangle$ is a subgroup of finite index in $G$. Moreover the closure of $\tilde{\F} = \bigcap_{M \in S} D_{M^{-1}}(i_n) \cap \F_{i_n}$ is a finite-sided polyhedron containing the Dirichlet fundamental domain $\F$ of $G$.
\end{proposition}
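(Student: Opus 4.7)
The plan is to verify, in order: (a) that $\F \subseteq \tilde{\F}$ and, once $S$ is finite, that $\tilde{\F}$ is a finite-sided convex polyhedron; (b) that Algorithm~\ref{algorithm2} terminates, which is equivalent to $|S|<\infty$; and (c) that $H=\langle S \cup G_{i_n}\rangle$ has finite index in $G$. The arguments follow the same pattern as the two- and three-dimensional case in \cite{algebrapaper}, with the geometry of the bisectors now supplied by Theorem~\ref{maintheorem} and Lemma~\ref{defpsigamma}. For (a), formula (\ref{fundwithstab}) gives $\F = \F_{i_n} \cap \bigcap_{\gamma \in G \setminus G_{i_n}} D_\gamma(i_n)$. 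Every $M \in S$ fails to lie in $\PSU_+(\Gamma_n(\Z))$ by construction, so by Lemma~\ref{su} it does not fix $i_n$; hence $M^{-1} \in G \setminus G_{i_n}$ and $\F \subseteq D_{M^{-1}}(i_n)$. Intersecting yields $\F \subseteq \tilde{\F}$. Because $\PSU_+(\Gamma_n(\R))$ is the compact stabilizer of $i_n$ in $\PSL_+(\Gamma_n(\R))$, the group $G_{i_n}=G\cap \PSU_+(\Gamma_n(\R))$ is finite, so its fundamental domain $\F_{i_n}$ is a finite-sided polyhedron; combined with $|S|<\infty$, this makes $\tilde{\F}$ a finite-sided convex polyhedron.

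The heart of the proof is termination. By Corollary~\ref{geomfinite}, $\F$ has only finitely many sides, so only finitely many elements $g_1,\dots,g_r\in G \setminus G_{i_n}$ contribute bisectors bounding $\F$. Lemma~\ref{rhogamma}(iii), together with the discreteness of $\Gamma_n(\Z)$ in $\Cliff_n(\R)$, shows that for any constant $C$ only finitely many matrices of $\PSL_+(\Gamma_n(\Z))$ satisfy $\Vert M\Vert^2\leq C$; so within a bounded number of outer iterations the algorithm will have inspected each $g_j^{-1}$ and added the corresponding half-space $D_{g_j}(i_n)$ to the defining intersection. Together with the hyperplane half-spaces produced by Algorithm~\ref{algorithm1}, the candidate polyhedron then coincides with $\F$ away from the cusps of $\F$, so the only remaining obstruction to $BF=\emptyset$ is the cusp trace. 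The main obstacle, as in \cite{algebrapaper}, is to rule this out: one must show that the infinitely many isometric hemispheres accumulating at each cusp of $\F$ are processed in finitely many further iterations and jointly cover a neighborhood of the cusp inside $\partial \Hy^{n+1}$, forcing $BF$ to shrink to the empty set. The explicit formulas in Theorem~\ref{maintheorem} make this geometric argument transferable almost verbatim from the planar and three-dimensional setting.

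For (c) I would conclude by a volume comparison. Let $\F_H$ be the Dirichlet fundamental polyhedron of $H$ with centre $i_n$. Since $S \cup G_{i_n} \subseteq H$, every half-space appearing in the definition of $\tilde{\F}$ also appears in the defining intersection for $\F_H$, so $\F_H \subseteq \tilde{\F}$; on the other hand $H \subseteq G$ gives $\F \subseteq \F_H$. Termination of Algorithm~\ref{algorithm2} ensures that the closure of $\tilde{\F}$ in $\R^{n+1}$ meets $\partial \Hy^{n+1}$ only at the cusps of $\F$ (handled on the finite side by the loop and at infinity by Algorithm~\ref{algorithm1}), so $\tilde{\F}$ has finite hyperbolic volume. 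The standard identity $\mbox{vol}(\F_H)=[G:H]\cdot \mbox{vol}(\F)$, together with $0<\mbox{vol}(\F_H)\leq \mbox{vol}(\tilde{\F})<\infty$, then forces $[G:H]<\infty$.
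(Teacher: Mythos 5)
Your argument follows the paper's proof in all essentials: the containment chain $\F \subseteq \F_H \subseteq \tilde{\F}$, finiteness of $S$ from termination of Algorithm~\ref{algorithm2} together with discreteness (finitely many $M$ of each norm), finiteness of the volume of $\tilde{\F}$, and the volume--index identity (the paper invokes \cite[Theorem 6.7.3]{ratcliffe} for exactly this) to conclude $[G:H]<\infty$. The additional detail you supply --- why $\F\subseteq D_{M^{-1}}(i_n)$ via Lemma~\ref{su}, the finiteness of $G_{i_n}$, and the cusp discussion in the termination step --- is consistent with, and somewhat more explicit than, the paper's own terse assertions.
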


\begin{proof}
By Theorem~\ref{finitelygen} and Corollary~\ref{geomfinite}, the fundamental domain of $\PSL_+(\Gamma_n(\Z))$ has finite volume and finitely many sides and hence also $G$ has finite covolume and its fundamental domain has finitely many sides. Therefore Algorithm~\ref{algorithm2} has to stop after finitely many steps. As $G$ is a discrete subgroup of $\PSL_+(\Gamma_n(\R))$, for every $k \geq 1$, there are only finitely many $M \in G$ such that $\Vert M \Vert^2 = k$. Thus the set $S$ is finite. 

By construction $\tilde{\F}$ is clearly a finite-sided convex polyhedron, which contains the Dirichlet fundamental polyhedron $\F$ with centre $i_n$ for $G$. Moreover $\tilde{\F}$ contains the fundamental Dirichlet domain $\F'$ of $\langle M \cup G_{i_n} \mid M \in S \rangle$. 
Since \Cref{algorithm2} concludes with $BF = \emptyset$, the intersection of the closure of $\tilde{\F}$ with
$\partial \Hy^{n+1}$ is finite and, hence, the finite-sided polyhedron $\tilde{\F}$ has finite volume. Thus we have
$$\F \subseteq \F' \subseteq \tilde{\F},$$
and $\tilde{\F}$ has finite volume. 
Thus,  $\F'$ has finite volume and by \cite[Theorem 6.7.3]{ratcliffe}, $\langle M \cup G_{i_n} \mid M \in S \rangle$ has finite index in $G$. 
\end{proof}

As we noted above, the polyhedron $\tilde{\F}$ has finite volume and, hence, only finitely
many ideal vertices. While the number of ideal boundary points of an actual fundamental
polyhedron of G might be smaller, this is not a problem.

\begin{remark}\label{algosimple}
Note that in some cases, e.g. if $G=\PSL_+(\Gamma_n(\Z))$, it is possible to simplify Algorithm~\ref{algorithm1} and 
\ref{algorithm2} as follows. If $M\in 
\PSL_+(\Gamma_n(\Z))$, then $\vert a(M) \vert^2 \in \Z$. The same is 
true for $\vert c(M) \vert^2$. Thus $\vert a(M) \vert^2 + \vert c(M) 
\vert^2 = 1$ if and only $a(M)=0$ or $c(M)=0$ and the one different 
from $0$ is of norm $1$. Suppose $a(M)=0$ and let $M' = 
\begin{pmatrix} 0 & -1 \\ 1 & 0 \end{pmatrix} M $. Then $c(M')=0$ 
and by Theorem~\ref{maintheorem} $\Sigma_{M^{-1}}(i_n) = 
\Sigma_{M'^{-1}}(i_n)$. Thus, without loss of generality, in 
Algorithm~\ref{algorithm1}, it suffices to run step 1 through the matrices 
$M \in G$ such that $\vert a(m) \vert^2=1$ and $c(M)=0$. In 
Algorithm~\ref{algorithm2}, $S$ may be adapted accordingly. 
\end{remark}

\section{Pulling back subgroups of \texorpdfstring{$\PSL_+(\Gamma_4(\R))$ to $\PSL_2(\h(\R))$}{LG}}

As shown in the previous section, we can find generating sets for subgroups of finite index of $\PSL_+(\Gamma_4(\Z))$. However, as explained in the introduction, we are interested in finding generators
of finite index subgroups of discrete subgroups of the groups $\PSL_2((\frac{x,y}{\Q}))$ for $x,y \in \Z_{<0}$. In particular, by \cite[Theorem 2.5]{eiskiefvgel} we are interested in the cases $x=y=-1$, $x=-1$, $y=-3$ and $x=-2$, $y=-5$. 

As both $\PSL_+(\Gamma_4(\R))$ and $\PSL_2(\h(\R))$ are isomorphic to $\Iso^+(\Hy^5)$, they are isomorphic. We will now describe this isomorphism.
Details on this can be found in a more general context in \cite[Section 6]{ElsGrunMen}. First we define the following two elements
\begin{eqnarray*}
\varepsilon_1 & = & \frac{1}{2}(1+i_1i_2i_3),\\
\varepsilon_2 & = & \frac{1}{2}(1-i_1i_2i_3).
\end{eqnarray*}
Note that $\varepsilon_1$ and $\varepsilon_2$ are elements of the centre of $\Cliff_4(\R)$. Moreover we have the following:
$$ \varepsilon_i^2= \varepsilon_i,\ \overline{\varepsilon_i}=\varepsilon_i \textrm{ for }i=1,2,\ \varepsilon_1^*=\varepsilon_2,\ \epsilon_1\epsilon_2=0,\ \varepsilon_1+\varepsilon_2=1.$$
Thus $\varepsilon_1$ and $\varepsilon_2$ are orthogonal idempotents in $\Cliff_4(\R)$. Now every element $\alpha \in \Cliff_4(\R)$ may be expressed as
$$\alpha=\varepsilon_1a_1 + \varepsilon_2a_2,$$
where $a_1, a_2 \in \Cliff_3(\R)$. Indeed let $\alpha=\alpha_0+\alpha_1i_1+\alpha_2i_2+\alpha_3i_3+\alpha_4i_1i_2
+\alpha_5i_1i_3+\alpha_6i_2i_3+\alpha_7i_1i_2i_3$ and $a_1=x_0+x_1i_1+x_2i_2+x_3i_1i_2$ and $a_2=y_0+y_1i_1+y_2i_2+y_3i_1i_2$. Then $\alpha=a_1\varepsilon_1 + a_2\varepsilon_2$ if and only if
\begin{equation}\label{epsiloneq}
\begin{array}{lcl}
\alpha_0 = \frac{x_0+y_0}{2},  & \quad & \alpha_4 = \frac{x_3+y_3}{2},\\
\alpha_1 = \frac{x_1+y_1}{2},  & \quad & \alpha_5 = \frac{x_2-y_2}{2},\\
\alpha_2 = \frac{x_2+y_2}{2},  & \quad & \alpha_6 = \frac{-x_1+y_1}{2},\\
\alpha_3 = \frac{-x_3+y_3}{2},  & \quad & \alpha_7 = \frac{x_0-y_0}{2}.
\end{array}
\end{equation}
It is well known (see for example \cite{1985ahlfors}) and easy to prove that $\Cliff_3(\R)$ is isomorphic to $\h(\R)$. Denote this isomorphism by $\omega$. It is clear that $\omega(1)=1$, $\omega(i_1)=i$, $\omega(i_2)=j$ and $\omega(i_1i_2)=k$. Thus we can define an algebra homomorphism
\begin{equation}
\chi_1: \Cliff_3(\R) \rightarrow \h(\R), \ \alpha \mapsto \omega(a_1).
\end{equation}
Of course one can extend $\chi$ to the corresponding matrix algebras
\begin{equation}
\tilde{\chi}: M_2(\Cliff_4(\R)) \rightarrow M_2(\h(\R)).
\end{equation}

We now restrict $\tilde{\chi}$ to the group $\SL_+(\Gamma_4(\R))$. The proof of the following lemma can be found in \cite[Section 6]{ElsGrunMen}. 

\begin{lemma}\label{isomsl+sl}
The homomorphism $\tilde{\chi}$ restricts to an isomorphism 
\begin{equation}\label{defchi}
\chi: \SL_+(\Gamma_4(\R)) \rightarrow \SL_2(\h(\R)).
\end{equation}
\end{lemma}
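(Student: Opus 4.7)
\emph{Plan.} The plan is to verify, in order: (i) $\tilde\chi$ is a well-defined ring map; (ii) its restriction $\chi$ sends $\SL_+(\Gamma_4(\R))$ into $\SL_2(\h(\R))$; (iii) $\chi$ is a bijection.

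For (i), I would first observe that $I := i_1 i_2 i_3$ is central in $\Cliff_4(\R)$ (it anticommutes twice with each generator) and satisfies $I^2 = 1$, so that $\varepsilon_1 = \tfrac12(1+I)$ and $\varepsilon_2 = \tfrac12(1-I)$ are a pair of orthogonal central idempotents summing to $1$. Projection onto the corner $\varepsilon_1 \Cliff_4(\R)$ is thus an algebra homomorphism. Inside this $4$-dimensional corner, the identity $\varepsilon_1 i_1 i_2 i_3 = \varepsilon_1$ forces $\varepsilon_1 i_3 = -\varepsilon_1 i_1 i_2$, so the corner is generated by $\varepsilon_1 i_1$ and $\varepsilon_1 i_2$ subject to the defining relations of $\Cliff_3(\R)$. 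Composing with $\omega : \Cliff_3(\R) \to \h(\R)$ yields a surjective algebra map $\chi_1$, and its entrywise extension $\tilde\chi$ is a ring homomorphism.

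For (ii), I would use (\ref{epsiloneq}) to compute $\chi_1$ on Clifford vectors: for $v = v_0 + v_1 i_1 + v_2 i_2 + v_3 i_3 \in \V^4(\R)$ the $\varepsilon_1$-part is $v_0 + v_1 i_1 + v_2 i_2 - v_3 i_1 i_2$, so $\chi_1(v) = v_0 + v_1 i + v_2 j - v_3 k$ and $|\chi_1(v)|^2 = |v|^2$. Since $\Gamma_4(\R)$ is multiplicatively generated by invertible vectors, this norm preservation extends to all of $\Gamma_4(\R)$. Combining the multiplicativity of $\chi_1$ with the explicit pseudo-determinant formula (\ref{defsigma}), one concludes that the normalization $\alpha\delta^* - \beta\gamma^* = 1$ of $\SL_+(\Gamma_4(\R))$ is sent to a Dieudonn\'e determinant equal to $1$, hence $\chi(M) \in \SL_2(\h(\R))$.

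For (iii), note that $\chi_1$ only retains the $\varepsilon_1$-component of its input, so injectivity of $\chi$ is the statement that on $\SL_+(\Gamma_4(\R))$ the $\varepsilon_2$-component of each matrix entry is rigidly determined by its $\varepsilon_1$-component. The crucial observation is that $\varepsilon_1^* = \varepsilon_2$: the involution $*$ swaps the two corners, so the vector conditions $\alpha\beta^*,\gamma\delta^*, \gamma^*\alpha, \delta^*\beta \in \V^4(\R)$ become nontrivial linear couplings between the $\varepsilon_1$- and $\varepsilon_2$-halves of the four entries, and together with $\alpha\delta^* - \beta\gamma^* = 1$ they pin down the $\varepsilon_2$-pieces uniquely. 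For surjectivity, given $M \in \SL_2(\h(\R))$ I would set each entry of the preimage equal to $\varepsilon_1 \omega^{-1}(\cdot)$ plus the $\varepsilon_2$-piece prescribed by the same coupling relations, and verify that the resulting matrix lies in $\SL_+(\Gamma_4(\R))$.

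The main obstacle I anticipate is precisely the coupling analysis of (iii): because $*$ intertwines $\varepsilon_1$ and $\varepsilon_2$, the four vector constraints tie all four entries together in a nonobvious way and it is not immediately clear that the resulting system has a unique solution. A more transparent route, and the one taken in \cite[Section~6]{ElsGrunMen} as cited by the statement, is to work with the equivalent characterization of $\SL_+(\Gamma_n(\R))$ of \cite[Definition~3.1]{ElsGrunMen}, which was designed precisely to expose this structural rigidity and to bypass a head-on computation with the idempotent decomposition.
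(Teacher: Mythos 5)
First, note that the paper does not actually prove this lemma: it states that the proof can be found in \cite[Section 6]{ElsGrunMen}, so there is no in-paper argument to compare against. Your structural setup is sound and consistent with what the paper uses afterwards: $i_1i_2i_3$ is indeed central in $\Cliff_4(\R)$ with square $1$, so $\varepsilon_1,\varepsilon_2$ are orthogonal central idempotents, the corner $\varepsilon_1\Cliff_4(\R)$ is a copy of $\Cliff_3(\R)\cong\h(\R)$, and your computation $\chi_1(v_0+v_1i_1+v_2i_2+v_3i_3)=v_0+v_1i+v_2j-v_3k$ agrees with (\ref{epsiloneq}) and with the paper's later assertion that $\chi_1$ restricts to a bijection $\V^4(\R)\to\h(\R)$.

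The genuine gap is that the heart of the lemma --- bijectivity of $\chi$ on $\SL_+(\Gamma_4(\R))$ --- is not proved but only described as an obstacle, and the same difficulty already undermines your step (ii). Since $\varepsilon_1^*=\varepsilon_2$, the involution $*$ swaps the two corners, so for $\alpha=\varepsilon_1a_1+\varepsilon_2a_2$ one has $\chi_1(\alpha^*)=\omega(a_2^*)$, not $\chi_1(\alpha)^*$. Consequently ``multiplicativity of $\chi_1$'' applied to $\alpha\delta^*-\beta\gamma^*=1$ yields a relation involving the $\varepsilon_2$-components of $\delta$ and $\gamma$, and says nothing directly about the Dieudonn\'e determinant of $\begin{pmatrix}\chi_1(\alpha)&\chi_1(\beta)\\\chi_1(\gamma)&\chi_1(\delta)\end{pmatrix}$ until you have established exactly the $\varepsilon_1$--$\varepsilon_2$ coupling you defer to part (iii). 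So as written, neither (ii) nor (iii) is complete, and the proposal ultimately falls back on the same citation the paper uses. The standard way to close this gap --- and the one \cite[Section 6]{ElsGrunMen} takes, which the paper exploits immediately after the lemma when it lists $\chi^{-1}$ of the generators and decomposes an arbitrary matrix into Cases 1--3 --- is to verify the homomorphism property and bijectivity on a generating set of the two groups: the transvections $\begin{pmatrix}1&\beta\\0&1\end{pmatrix}$ and $\begin{pmatrix}1&0\\\beta&1\end{pmatrix}$ with $\beta\in\V^4(\R)$, the element $\begin{pmatrix}0&-1\\1&0\end{pmatrix}$, and the diagonal matrices $\begin{pmatrix}\varepsilon_1+\varepsilon_2\overline{\delta}&0\\0&\varepsilon_1\delta+\varepsilon_2\end{pmatrix}$, where the coupling between the two idempotent components is made explicit rather than extracted from the four vector conditions simultaneously. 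If you want a self-contained proof you should either carry out that generator-by-generator verification or actually solve the linear coupling system you describe; merely naming it does not establish injectivity or surjectivity.
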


As we are interested in orders in $\SL_2(\h(\R))$ or more precisely subgroups of finite index of $\SL_2(\h(\Z))$, we consider the restriction of $\chi$ to a group commensurable with $\SL_+(\Gamma_4(\Z))$. Therefore define
$$\tilde{\Gamma}_4(\Z)= \lbrace \epsilon_1 a + \epsilon_2 b \mid a,b, \in \Cliff_3(\Z), \epsilon_1 a + \epsilon_2 b \in \Gamma_4(\R)  \rbrace$$
and
\begin{eqnarray*}
\SL_+(\tilde{\Gamma}_4(\Z)) & = & \left \lbrace \begin{pmatrix} \alpha & \beta \\ \gamma & \delta \end{pmatrix} \mid \alpha,\beta,\gamma,\delta \in \tilde{\Gamma}_4(\Z); \alpha\delta^*-\beta\gamma^*=1; \alpha\beta^*, \gamma\delta^*, \gamma^*\alpha, \delta^*\beta \in \V^4(\Z) \right \rbrace.
\end{eqnarray*}

It is clear that $\SL_+(\tilde{\Gamma}_4(\Z))$ contains $\SL_+(\Gamma_4(\Z))$ as a subgroup. For the next lemma we need to compute the inverse of the isomorphism $\chi$. By \cite{ElsGrunMen}, we know the pre-images of the generators of $\SL_2(\h(\R))$. First it is easy to see that
$$\chi\left(\begin{pmatrix} 0 & -1 \\ 1 & 0 \end{pmatrix} \right) = \begin{pmatrix} 0 & -1 \\ 1 & 0 \end{pmatrix}.$$
Then, for every $b \in \h(\R)$,
$$\chi\left(\begin{pmatrix} 1 & 0 \\ \beta & 1 \end{pmatrix}\right)= \begin{pmatrix} 1 & 0 \\ b & 1 \end{pmatrix},\ \chi\left(\begin{pmatrix} 1 & \beta \\ 0 & 1 \end{pmatrix}\right)=  \begin{pmatrix} 1 & b \\ 0 & 1 \end{pmatrix},$$
where $\beta \in \V^4(\R)$ unique such that $\chi_1(\beta)=b$. Finally, for $d \in \h(\R)$ with $d\overline{d}=1$,
$$\chi\left(\begin{pmatrix} \varepsilon_1+ \varepsilon_2\overline{\delta} & 0 \\ 0 & \varepsilon_1\delta + \varepsilon_2  \end{pmatrix}\right)= \begin{pmatrix} 1 & 0 \\ 0 & d \end{pmatrix},$$
where $\delta \in \V^4(\R)$ unique such that $\chi_1(\delta)=d$. By the alternative definition  of $\SL_+(\Gamma_n(\R))$ \cite[Definition 3.1]{ElsGrunMen} (see also the remark under Definition~\ref{defcliffmoeb}), the above matrix is clearly contained in $\SL_+(\Gamma_4(\R))$. Note that $\delta\overline{\delta}=1$. Indeed, write $\delta=\varepsilon_1d_1+\varepsilon_2d_2$. Then $\chi_1(\delta)=d$ if and only if $d_1=\omega^ {-1}(d)$ and hence $d_2=\omega^{-1}(d)^*$. Thus $$\delta\overline{\delta}=(\varepsilon_1d_1+\varepsilon_2d_2)(\varepsilon_1\overline{d_1}+\varepsilon_2\overline{d_2})
= \varepsilon_1 d_1\overline{d_1} + \varepsilon_2 d_2\overline{d_2}
= \varepsilon_1 \omega^{-1}(d\overline{d}) + \varepsilon_2\omega^{-1}(d\overline{d})^* = \varepsilon_1 + \varepsilon_2 = 1.$$
Take an arbitrary matrix $\begin{pmatrix} a & b \\ c & d \end{pmatrix} \in \SL_2(\h(\R))$. Let $\sigma$ be as defined in (\ref{defsigma}). We want to decompose the matrix into a product of generators. Therefore we essentially need to distinguish three cases. 

\noindent \underline{Case 1:} $c=0=b$. Then $a$ and $d$ are different from $0$ and we have
$$\begin{pmatrix} a & 0 \\ 0 & d \end{pmatrix} 
= \begin{pmatrix} 1 & -a \\ 0 & 1 \end{pmatrix}
\begin{pmatrix} 1 & 0 \\ a^{-1}-1 & 1 \end{pmatrix}
\begin{pmatrix} 1 & 1 \\ 0 & 1 \end{pmatrix}
\begin{pmatrix} 1 & 0 \\ a-1 & 1 \end{pmatrix}
\begin{pmatrix} 1 & 0 \\ 0 & ad \end{pmatrix}.$$
Let
\begin{equation}
\begin{array}{lcl}
\alpha \in \V^4(\R) & \textrm{ unique such that } & \chi_1(\alpha)=a\\
\tau \in \V^4(\R) & \textrm{ unique such that } & \chi_1(\tau)=ad.
\end{array}
\end{equation}
It is easy to see that $\alpha^{-1} \in \V^4(\R)$ and $\chi_1(\alpha^{-1})=a^{-1}$ and $\alpha -1 \in \V^4(\R)$ and $\chi_1(\alpha-1))=a-1$. By the above we may compute the pre-images of the matrices appearing in the decomposition and we get
\begin{equation}\label{preimagechi1}
\chi^{-1}\left(\begin{pmatrix} a & 0 \\ 0 & d \end{pmatrix}\right)
= \begin{pmatrix} \varepsilon_1\alpha+\varepsilon_2\alpha\overline{\tau} & 0 \\ 
0 & 
\varepsilon_1\alpha^{-1}\tau + \varepsilon_2\alpha^{-1} 
\end{pmatrix}
\end{equation}

\noindent \underline{Case 2:} $c=0$ and $b$, $a$ and $d$ are different from $0$. We have
$$\begin{pmatrix} a & b \\ 0 & d \end{pmatrix} 
= \begin{pmatrix} 1 & -a \\ 0 & 1 \end{pmatrix}
\begin{pmatrix} 1 & 0 \\ a^{-1}-1 & 1 \end{pmatrix}
\begin{pmatrix} 1 & 1 \\ 0 & 1 \end{pmatrix}
\begin{pmatrix} 1 & 0 \\ a-1 & 1 \end{pmatrix}
\begin{pmatrix} 1 & 0 \\ 0 & ad \end{pmatrix}
\begin{pmatrix} 1 & a^{-1}b \\ 0 & 1 \end{pmatrix}.$$ Define again
\begin{equation}
\begin{array}{lcl}
\alpha \in \V^4(\R) & \textrm{ unique such that } & \chi_1(\alpha)=a\\
\tau \in \V^4(\R) & \textrm{ unique such that } & \chi_1(\tau)=ad\\
\eta \in \V^4(\R) & \textrm{ unique such that } & \chi_1(\eta)=a^{-1}b.
\end{array}
\end{equation}
By the same computations as above, we find
\begin{equation}\label{preimagechi2}
\chi^{-1}\left(\begin{pmatrix} a & b \\ 0 & d \end{pmatrix}\right)
= \begin{pmatrix} \varepsilon_1\alpha+\varepsilon_2\alpha\overline{\tau} & \epsilon_1\alpha\eta+
\varepsilon_2\alpha\eta\overline{\tau} \\ 
0 & 
\varepsilon_1\alpha^{-1}\tau + \varepsilon_2\alpha^{-1}
\end{pmatrix}
\end{equation}

\noindent \underline{Case 3:} $c \neq 0$. We finally have 
$$\begin{pmatrix} a & b \\ c & d \end{pmatrix} 
= \begin{pmatrix} 1 & ac^{-1} \\ 0 & 1 \end{pmatrix}
\begin{pmatrix} 0 & -1 \\ 1 & 0 \end{pmatrix}
\begin{pmatrix} 1 & -c \\ 0 & 1 \end{pmatrix}
\begin{pmatrix} 1 & 0 \\ c^{-1}-1 & 1 \end{pmatrix}
\begin{pmatrix} 1 & 1 \\ 0 & 1 \end{pmatrix}
\begin{pmatrix} 1 & 0 \\ c-1 & 1 \end{pmatrix}
\begin{pmatrix} 1 & 0 \\ 0 & \sigma \end{pmatrix}
\begin{pmatrix} 1 & c^{-1}d \\ 0 & 1 \end{pmatrix}.$$
As we are working in $\SL_2(\h(\R))$, $\vert \sigma \vert^2 =1$. Let
\begin{equation}
\begin{array}{lcl}
\tau \in \V^4(\R) & \textrm{ unique such that } & \chi_1(\tau)=ac^{-1}\\
\eta \in \V^4(\R) & \textrm{ unique such that } & \chi_1(\eta)=c^{-1}d\\
\mu \in \V^4(\R) & \textrm{ unique such that } & \chi_1(\mu)=\sigma\\
\gamma \in \V^4(\R) & \textrm{ unique such that } & \chi_1(\gamma)=c.
\end{array}
\end{equation}
By the above, we can compute the pre-images of the seven matrices appearing in the decomposition of $\begin{pmatrix} a & b \\ c & d \end{pmatrix}$. Taking the product again, we obtain that
\begin{equation}\label{preimagechi3}
\chi^{-1}\left(\begin{pmatrix} a & b \\ c & d \end{pmatrix}\right)
= \begin{pmatrix} \varepsilon_1\tau\gamma+\varepsilon_2\tau\gamma\overline{\mu} & \epsilon_1(\tau\gamma\eta-\gamma^{-1}\mu)+
\varepsilon_2(\tau\gamma\eta\overline{\mu}-\gamma^{-1}) \\ 
\varepsilon_1\gamma +\varepsilon_2\gamma\overline{\mu} & 
\varepsilon_1\gamma\eta + \varepsilon_2\gamma\eta\overline{\mu}
\end{pmatrix}
\end{equation}

We have now all the ingredients to prove the following lemma.

\begin{lemma}\label{isooverZ}
The isomorphism $\chi$ restricts to an isomorphism from $\SL_+(\tilde{\Gamma}_4(\Z))$ to $\SL_2(\h(\Z))$. 
\end{lemma}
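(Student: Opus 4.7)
The plan is to establish both $\chi(\SL_+(\tilde{\Gamma}_4(\Z))) \subseteq \SL_2(\h(\Z))$ and $\chi^{-1}(\SL_2(\h(\Z))) \subseteq \SL_+(\tilde{\Gamma}_4(\Z))$; combined with Lemma~\ref{isomsl+sl}, this gives the desired group isomorphism. The forward direction is immediate: each entry of $M' \in \SL_+(\tilde{\Gamma}_4(\Z))$ has the form $\varepsilon_1 x + \varepsilon_2 y$ with $x,y \in \Cliff_3(\Z)$, so $\chi$ sends it to $\omega(x) \in \h(\Z)$ because $\omega$ restricts to an algebra isomorphism $\Cliff_3(\Z) \cong \h(\Z)$; combined with $\chi(M') \in \SL_2(\h(\R))$ from Lemma~\ref{isomsl+sl}, we obtain $\chi(M') \in \SL_2(\h(\Z))$.

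For the reverse inclusion, let $M = \begin{pmatrix} a & b \\ c & d \end{pmatrix} \in \SL_2(\h(\Z))$ and let $M' = \chi^{-1}(M) = \begin{pmatrix} \alpha & \beta \\ \gamma & \delta \end{pmatrix}$ with $\varepsilon$-decompositions $\alpha = \varepsilon_1 a_1 + \varepsilon_2 a_2$, and analogously for $\beta,\gamma,\delta$. By construction of $\chi$, the $\varepsilon_1$-components are $a_1 = \omega^{-1}(a)$, $b_1 = \omega^{-1}(b)$, $g_1 = \omega^{-1}(c)$, $d_1 = \omega^{-1}(d)$, all in $\Cliff_3(\Z)$ because $a,b,c,d \in \h(\Z)$. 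The real task is to show that the $\varepsilon_2$-components $a_2, b_2, g_2, d_2$ also lie in $\Cliff_3(\Z)$.

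To do this I would apply $\chi$ to the matrix inverse. By Lemma~\ref{inverscliffmat}, $(M')^{-1} = \begin{pmatrix} \delta^* & -\beta^* \\ -\gamma^* & \alpha^* \end{pmatrix}$, and since $\chi$ is a group homomorphism, $\chi((M')^{-1}) = M^{-1}$. Using $\varepsilon_1^* = \varepsilon_2$, one computes $\alpha^* = \varepsilon_1 a_2^* + \varepsilon_2 a_1^*$, so $\chi(\alpha^*) = \omega(a_2^*) = (\omega(a_2))^{\ddagger}$, where $\ddagger$ denotes the anti-involution of $\h$ corresponding to $*$ on $\Cliff_3$ under $\omega$ (explicitly, $\ddagger$ flips the sign of the $k$-coefficient). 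Reading off all four entries of $M^{-1}$ yields
\[
\omega(a_2)=((M^{-1})_{22})^{\ddagger},\quad \omega(d_2)=((M^{-1})_{11})^{\ddagger},\quad \omega(b_2)=-((M^{-1})_{12})^{\ddagger},\quad \omega(g_2)=-((M^{-1})_{21})^{\ddagger}.
\]
By Lemma~\ref{unitinMHZ}, the condition $\Delta(M)=1$ forces $M$ to be a unit in $M_2(\h(\Z))$, hence $M^{-1} \in M_2(\h(\Z))$. Since $\ddagger$ preserves $\h(\Z)$, each of the four $\varepsilon_2$-components lies in $\Cliff_3(\Z)$, and therefore $M' \in \SL_+(\tilde{\Gamma}_4(\Z))$.

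The main conceptual obstacle is precisely the extraction of the $\varepsilon_2$-components: directly expanding the explicit preimage formulas \eqref{preimagechi1}--\eqref{preimagechi3} leads to products such as $t_1^* g_1^* m_1'$ involving $\gamma^{-1}$ and $\mu$, whose $\omega$-images look like $(c^{\ddagger})^{-1} a^{\ddagger} c^{\ddagger} \sigma^{\sim}$ and are not manifestly in $\h(\Z)$ (the factor $c^{-1}$ can clear a denominator only through nontrivial cancellation using $|\sigma|=1$). The slicker route via $\chi$-equivariance of the matrix inverse transfers integrality of $M^{-1}$, which is automatic by Lemma~\ref{unitinMHZ}, straight over to $M'$ and entirely bypasses the case distinction among \eqref{preimagechi1}, \eqref{preimagechi2}, \eqref{preimagechi3}.
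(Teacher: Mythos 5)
Your proof is correct, and although it rests on the same ultimate source of integrality as the paper's proof --- namely that $\Delta=1$ makes $M$ a unit of $M_2(\h(\Z))$ by Lemma~\ref{unitinMHZ}, so that $M^{-1}$ has integral entries --- the way you reach the entries of $M^{-1}$ is genuinely different and considerably cleaner. The paper decomposes $M$ into elementary generators, computes the explicit preimages (\ref{preimagechi1})--(\ref{preimagechi3}) case by case, and then checks by hand that $\chi_1$ applied to the $*$-conjugates of the entries reproduces the expressions occurring in the explicit inverse formula (\ref{inversdeltamatrix}); in the generic case $c\neq 0$ this needs auxiliary observations such as $\overline{\sigma}=\sigma^{-1}$ and the commutation of $\eta$ with $\overline{\mu}$. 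You instead combine the structural identity $(M')^{-1}=\begin{pmatrix} \delta^* & -\beta^* \\ -\gamma^* & \alpha^* \end{pmatrix}$ of Lemma~\ref{inverscliffmat} with $\chi\bigl((M')^{-1}\bigr)=\chi(M')^{-1}=M^{-1}$, which hands you $\chi_1(\alpha^*),\ldots,\chi_1(\delta^*)$ as entries of $M^{-1}$ with no computation and no case split; together with the identity $a_2=(\omega^{-1}\chi_1(\alpha^*))^*$ (which the paper also uses) and the fact that $\omega$ and $*$ preserve integrality, this gives the integrality of the $\varepsilon_2$-components at once. What the paper's longer route buys is the explicit formulas (\ref{preimagechi1})--(\ref{preimagechi3}) for $\chi^{-1}$, which are needed anyway to carry out Method~\ref{methodeprincipale}; your route proves the lemma itself with far less bookkeeping and no risk of a computational slip. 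One small point that neither argument spells out, and which is worth a sentence: membership in $\SL_+(\tilde{\Gamma}_4(\Z))$ also requires $\alpha\beta^*,\gamma\delta^*,\gamma^*\alpha,\delta^*\beta\in\V^4(\Z)$, but this is automatic once the $\varepsilon$-components of all entries lie in $\Cliff_3(\Z)$, because an element of $\V^4(\R)$ whose $\varepsilon$-components are integral is forced by the relations (\ref{epsiloneq}) to lie in $\V^4(\Z)$.
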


\begin{proof}
It is clear that $\chi(\SL_+(\tilde{\Gamma}_4(\Z))) \subseteq \SL_2(\h(\Z))$. To prove the converse, consider an element $\begin{pmatrix} a & b \\ c & d \end{pmatrix}$ in $\SL_2(\h(\R))$ and consider its pre-images given by (\ref{preimagechi1}), (\ref{preimagechi2}) or (\ref{preimagechi3}). These are products of pre-images of the generators of $\SL_2(\h(\R))$ and as the latter are contained in $\SL_+(\Gamma_4(\R))$, the pre-images given by (\ref{preimagechi1}), (\ref{preimagechi2}) or (\ref{preimagechi3}) are contained in $\SL_+(\Gamma_4(\R))$. Thus the different entries of the pre-images given by (\ref{preimagechi1}), (\ref{preimagechi2}) or (\ref{preimagechi3}) are elements of $\Gamma_4(\R)$. It remains to prove that the different entries may be expressed as  $\varepsilon_1a_1 +\varepsilon_2a_2$ with $a_1, a_2 \in \Cliff_3(\Z)$. Therefore note, that if $\alpha \in \Cliff_4(\R)$, then the expression of $\alpha$ as $\varepsilon_1a_1+\varepsilon_2a_2$ with $a_1,a_2 \in \Cliff_3(\R)$ is unique. Moreover $a_1$ is given by $\omega^{-1}\chi_1(\alpha)$ and $a_2$ by $(\omega^{-1}\chi_1(\alpha^{*}))^{*}$. As $\omega$ clearly restricts to an isomorphism from $\Cliff_3(\Z)$ to $\h(\Z)$, to prove that $\alpha$ may be expressed as $\varepsilon_1a_1+\varepsilon_2a_2$ with $a_1,a_2 \in \Cliff_3(\Z)$, it is enough to prove that $\chi_1(\alpha)$ and $\chi_1(\alpha^{*})$ are elements of $\h(\Z)$.  So we first consider the case when $b=c=0$ and hence the pre-images are given by (\ref{preimagechi1}). Then $\chi_1(\varepsilon_1\alpha+\varepsilon_2\alpha\overline{\tau})=a \in \h(\Z)$. As $\alpha, \tau \in \V^4(\Z)$, $(\alpha\overline{\tau})^{*}=\overline{\tau}\alpha$ and thus $\chi_1((\alpha\overline{\tau})^{*})=\overline{d}\vert a \vert^2$, which is an element of $\h(\Z)$. Concerning the other entry, $\chi_1(\alpha^{-1}\tau)=d \in \h(\Z)$ and $\chi_1(\alpha^{-1})=a^{-1}$, which is also an element of $\h(\Z)$ as the original matrix is contained in $\SL_2(\h(\Z))$.  The case $c=0 \neq b$ is dealt with in exactly the same way. The general case described in (\ref{preimagechi3}) asks for a bit more reflection. First it is clear that $\chi_1(\tau\gamma), \chi_1(\gamma),\chi_1(\gamma\eta) \in \h(\Z)$. The upper right entry gives $\chi_1(\tau\gamma\eta-\gamma^{-1}\mu)=ac^{-1}d-c^{-1}\sigma$, which equals $b$ according to (\ref{defsigma}). 
We now compute the coefficients of $\varepsilon_2$. As $\vert \sigma \vert = \Delta =1$, $\overline{\sigma}=\sigma^{-1}$. Moreover the determinant of the pre-images given by (\ref{preimagechi3}) equals $1+\tau\gamma(\overline{\mu}\eta-\eta\overline{\mu})\gamma$, which implies that $\eta$ and $\overline{\mu}$ commute. Applying this we get 
$\chi_1((\tau\gamma\overline{\mu})^{*})=\sigma^{-1}cac^{-1}$, $\chi_1((\gamma\overline{\mu})^{*})=\sigma^{-1}c$, $\chi_1((\gamma\eta\overline{\mu})^{*})=c^{-1}d\sigma^{-1}c$ and $\chi_1((\tau\gamma\eta\overline{\mu}-\gamma^{-1})^{*})=
c^{-1}d\sigma^{-1}cac^{-1}-c^{-1}$. The latter expressions appear like this in the inverse of the matrix given in (\ref{inversdeltamatrix}). Thus by Lemma~\ref{unitinMHZ}, they are elements in $\h(\Z)$.
\end{proof}

By Lemma~\ref{isooverZ}, to get generators for a subgroup $G$ of finite index in $\SL_2(\h(\Z))$, it is sufficient to find generators for $\chi^{-1}(G)$ and apply $\chi$ to these generators. Recall that we described the algorithms~\ref{algorithm1} and \ref{algorithm2} for subgroups of $\SL_+(\Gamma_4(\Z))$ and $\chi^{-1}(G)$ is a subgroup of $\SL_+(\tilde{\Gamma}_4(\Z))$, which contains $\SL_+(\Gamma_4(\Z))$ as a subgroup. The next lemma shows that, if one works with finite index subgroups, this makes no difference. 

\begin{lemma}\label{finiteindexorders}
The group $\SL_+(\Gamma_4(\Z))$ is a subgroup of finite index of $\SL_+(\tilde{\Gamma}_4(\Z))$.
\end{lemma}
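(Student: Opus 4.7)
The plan is a principal-congruence argument modulo $2$. Using the central orthogonal idempotents $\varepsilon_1, \varepsilon_2$ (with $\varepsilon_1\varepsilon_2=0$ and $\varepsilon_i^2 = \varepsilon_i$), the subring
$$\tilde{\Cliff}_4(\Z) := \{\varepsilon_1 a + \varepsilon_2 b \mid a,b \in \Cliff_3(\Z)\} \subseteq \Cliff_4(\R)$$
is isomorphic as an associative algebra to $\Cliff_3(\Z) \times \Cliff_3(\Z)$ via $\varepsilon_1 a + \varepsilon_2 b \mapsto (a,b)$, since the idempotents force multiplication to be componentwise. Composing with the coordinatewise reduction modulo $2$ gives a ring homomorphism
$$\pi: \tilde{\Cliff}_4(\Z) \longrightarrow \Cliff_3(\Z/2\Z) \times \Cliff_3(\Z/2\Z),$$
which I would extend entrywise to $\bar{\pi}: M_2(\tilde{\Cliff}_4(\Z)) \to M_2(\Cliff_3(\Z/2\Z) \times \Cliff_3(\Z/2\Z))$ and then restrict multiplicatively to $\SL_+(\tilde{\Gamma}_4(\Z))$.

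The key input is formula (\ref{epsiloneq}), which shows that an element $\varepsilon_1 a_1 + \varepsilon_2 a_2$ with $a_1, a_2 \in \Cliff_3(\Z)$ lies in $\Cliff_4(\Z)$ if and only if $a_1 \equiv a_2 \pmod 2$. Consequently, if $M \in \SL_+(\tilde{\Gamma}_4(\Z))$ satisfies $\bar{\pi}(M) = I$, the diagonal entries of $M$ have both $\Cliff_3$-components congruent to $1$ modulo $2$ and the off-diagonal entries have both components congruent to $0$ modulo $2$; in both cases the two components coincide mod $2$, so every entry of $M$ lies in $\Cliff_4(\Z)$. Since the entries already belong to $\Gamma_4(\R) \cup \{0\}$, they in fact belong to $\Gamma_4(\Z) \cup \{0\}$. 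The remaining defining conditions of $\SL_+(\Gamma_4(\Z))$, namely $\alpha\delta^*-\beta\gamma^* = 1$ and $\alpha\beta^*, \gamma\delta^*, \gamma^*\alpha, \delta^*\beta \in \V^4(\R)$, are inherited directly from $\SL_+(\tilde{\Gamma}_4(\Z))$, so $M \in \SL_+(\Gamma_4(\Z))$.

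This proves that $\ker\bar{\pi}|_{\SL_+(\tilde{\Gamma}_4(\Z))} \subseteq \SL_+(\Gamma_4(\Z))$. Because the codomain $M_2(\Cliff_3(\Z/2\Z) \times \Cliff_3(\Z/2\Z))$ is finite, this kernel has finite index in $\SL_+(\tilde{\Gamma}_4(\Z))$, and therefore so does $\SL_+(\Gamma_4(\Z))$. I do not foresee any serious obstacle: the heavy lifting has already been carried out in (\ref{epsiloneq}). The only subtlety worth flagging is that $\bar{\pi}$ is not claimed to land in a group — its image may contain non-units of the target ring — but this is immaterial, since the fibres of a multiplicative map into a finite set are the left cosets of its kernel, which is all that is needed to conclude finiteness of the index.
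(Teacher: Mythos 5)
Your proof is correct, but it takes a genuinely different route from the paper. The paper's argument is a two-line appeal to order theory: it observes that $\Cliff_4(\Z)$ and $\tilde{\Cliff}_4(\Z)$ are both orders in $\Cliff_4(\Q)$, that $M_2(\Cliff_4(\Z))$ has finite additive index in $M_2(\tilde{\Cliff}_4(\Z))$, and then invokes the ``classical argument'' that unit groups of two orders in the same finite-dimensional rational algebra are commensurable (the same fact used in the introduction). You instead build an explicit principal congruence subgroup of level $2$: the identification $\tilde{\Cliff}_4(\Z)\cong\Cliff_3(\Z)\times\Cliff_3(\Z)$ via the central idempotents, followed by coordinatewise reduction mod $2$, together with the observation from (\ref{epsiloneq}) that $\varepsilon_1a_1+\varepsilon_2a_2\in\Cliff_4(\Z)$ exactly when $a_1\equiv a_2 \pmod 2$, shows that the kernel of $\bar{\pi}$ on $\SL_+(\tilde{\Gamma}_4(\Z))$ lands inside $\SL_+(\Gamma_4(\Z))$; your handling of the remaining defining conditions (entries in $\Gamma_4(\Z)\cup\{0\}$ via $\Gamma_4(\Z)=\Cliff_4(\Z)\cap\Gamma_4(\R)$, the rest inherited from $\SL_+(\Gamma_4(\R))$) is sound, as is your closing remark that multiplicativity of $\bar{\pi}$ into a finite target suffices to bound the index by the size of the image. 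What your approach buys is self-containedness and an explicit finite-index subgroup with a computable index bound, which is arguably more in the spirit of an algorithmic paper; what the paper's approach buys is brevity and independence from the specific idempotent bookkeeping. Both are valid proofs of the lemma.
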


\begin{proof}
Recall from Section~\ref{Cliffordsection} that we denote by $\Cliff_4(\Z)$ the module of integral combinations of the elements $i_1, i_2, i_3$. Denote by $\tilde{\Cliff}_4(\Z)$ the set of elements of the form $\varepsilon_1 a + \varepsilon_2 b$ with $a,b \in \Cliff_3(\Z)$. Then it is easy to see that both $\Cliff_4(\Z)$ and $\tilde{\Cliff}_4(\Z)$ are orders in $\Cliff_4(\Q)$, where the latter was defined in Remark~\ref{rationalClifford}. In the same way the matrix sets $M_2(\Cliff_4(\Z))$ and $M_2(\tilde{\Cliff}_4(\Z))$ are orders in $M_2(\Cliff_4(\Q))$. Notice that $M_2(\Cliff_4(\Z))$ has additive index $2$ in $M_2(\tilde{\Cliff}_4(\Z))$. By a classical argument one then shows that the multiplicative index of $\SL_+(\Gamma_4(\Z))$ in $\SL_+(\tilde{\Gamma}_4(\Z))$ is also finite. 
\end{proof}

This gives a method for finding generators of finite index subgroups of $\SL_2(\h(\Z))$. Note that as in Remark~\ref{abuse}, we abuse the notation and denote by $\chi$ the map defined in (\ref{defchi}) as well as its restriction to the projective groups $\PSL_+(\Gamma_4(\R))$ and $\PSL_2(\h(\R))$.

\begin{method}\label{methodeprincipale}
Let $G$ be a subgroup of finite index in $\SL_2(\h(\Z))$. Construct the group $\tilde{G} = \chi^{-1}(G) \cap \SL_+(\Gamma_4(\Z))$, where $\chi$ is defined in (\ref{defchi}). Consider the projection $\Pi(\tilde{G})$, where $\Pi$ is defined in (\ref{projection}). Apply algorithms~\ref{algorithm1} and \ref{algorithm2} for $n=4$ to $\Pi(\tilde{G})$ to get a finite generating set $S$ of a finite index subgroup of $\Pi(\tilde{G})$. The set $\chi\Pi^{-1}(S)$ is a finite generating set of a finite index subgroup of $G$.
\end{method}

\begin{proof}
As $G$ is of finite index in $\SL_2(\h(\Z))$, $\chi^{-1}(G)$ is of finite index in $\SL_+(\tilde{\Gamma}_4(\Z))$ by Lemma~\ref{isooverZ}. Thus, by Lemma~\ref{finiteindexorders}, $\tilde{G}$ is of finite index in $\SL_+(\Gamma_4(\Z))$. Finally $\chi(\tilde{G})$ has finite index in $\SL_2(\h(\Z))$ and is contained in $G$. Hence $\chi(\tilde{G})$ has finite index in $G$. Note that passing to the projective case does not affect the finite index. 
\end{proof}

Method~\ref{methodeprincipale} thus shows that finding generators for $\tilde{G}$ and mapping them back into $\SL_2(\h(\Z))$ is sufficient for our purpose.

\section{Generators for subgroups of finite index in \texorpdfstring{$\SL_2\left(\left(\frac{x,y}{\Z}\right)\right)$}{LG} with \texorpdfstring{$x$}{LG} and \texorpdfstring{$y$}{LG} negative integers}\label{sectiongenquat}

In this section we consider the problem of finding generators for a subgroup of finite index in $\SL_2\left(\left(\frac{x,y}{\Z}\right)\right)$ for general negative integers $x,y$, that are not simultaneously $-1$. The main idea is to consider different quaternion algebras over algebraic extensions of $\Q$ which are isomorphic to the classical quaternion algebra with $a=b=-1$. For this recall that if $K$ is a field of characteristic different from $2$, then for any $x,y,z,t \in K^{*}$, $$\left(\frac{x,y}{K}\right) \cong \left(\frac{z^2x,t^2y}{K}\right).$$

In particular the latter implies that over an algebraically closed field, all quaternion algebras are isomorphic. The field $\R$ is not algebraically closed, but it is easy to see that over $\R$ all totally definite quaternion algebras are isomorphic. We consider now quaternion algebras over $\Q$. Let $\left(\frac{x,y}{\Q}\right)$ be a totally definite quaternion algebra. Hence $x$ and $y$ are negative. Of course this algebra is not isomorphic to the classical quaternion algebra $\h(\Q)$. However as algebras over $\Q(\sqrt{\vert x \vert}, \sqrt{\vert y \vert})$, both algebras are isomorphic. This is however too big for our purpose, as we are interested in $\left(\frac{x,y}{\Q}\right)$ and not $\left(\frac{x,y}{\Q(\sqrt{\vert x \vert}, \sqrt{\vert y \vert})}\right)$. Therefore we need the following lemma. 

\begin{lemma}\label{quatisoQ}
Let $x$ and $y$ be negative integers. Then  
$$\h_{x,y}(\Q) := \lbrace a_0 + a_1\sqrt{\vert x \vert}i+a_2\sqrt{\vert y \vert}j+a_3\sqrt{\vert xy\vert }k \mid a_0,a_1,a_2,a_3 \in \Q \textrm{ and } i^2=j^2=-1, ij=-ji=k \rbrace$$
is a subalgebra of $\h(\R)$, which is isomorphic to the totally definite quaternion algebra $\left(\frac{x,y}{\Q}\right)$. 
\end{lemma}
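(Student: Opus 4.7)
The plan is to exhibit an explicit isomorphism by rescaling the standard quaternion units. Set
\[
I := \sqrt{|x|}\, i, \qquad J := \sqrt{|y|}\, j, \qquad K := \sqrt{|xy|}\, k,
\]
so that $\h_{x,y}(\Q) = \Q\cdot 1 + \Q I + \Q J + \Q K \subseteq \h(\R)$. First I would check, using the relations $i^2=j^2=-1$, $ij=-ji=k$ in $\h(\R)$ and the fact that $x,y<0$ (so $|x|=-x$, $|y|=-y$, $|xy|=xy$), that
\[
I^2 = |x|\, i^2 = -|x| = x, \qquad J^2 = |y|\, j^2 = -|y| = y,
\]
and that $IJ = \sqrt{|x|\,|y|}\, ij = \sqrt{|xy|}\, k = K$, while $JI = \sqrt{|xy|}\, ji = -K$. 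These are precisely the defining relations of the quaternion algebra $\left(\frac{x,y}{\Q}\right)$ on its standard basis $\{1,\mathbf{i},\mathbf{j},\mathbf{k}\}$ (with $\mathbf{i}^2=x$, $\mathbf{j}^2=y$, $\mathbf{i}\mathbf{j}=-\mathbf{j}\mathbf{i}=\mathbf{k}$).

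From this it follows that all products of elements of $\{1,I,J,K\}$ lie in $\Q\cdot 1 + \Q I + \Q J + \Q K$, so $\h_{x,y}(\Q)$ is closed under multiplication and (clearly) under addition and contains $1$; hence it is a $\Q$-subalgebra of $\h(\R)$. Next, by the universal property of $\left(\frac{x,y}{\Q}\right)$, the $\Q$-linear extension of
\[
\varphi : \left(\tfrac{x,y}{\Q}\right) \longrightarrow \h_{x,y}(\Q), \qquad 1 \mapsto 1,\ \mathbf{i}\mapsto I,\ \mathbf{j}\mapsto J,\ \mathbf{k}\mapsto K,
\]
is a $\Q$-algebra homomorphism, because the images satisfy the same defining relations.

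Finally, $\varphi$ is surjective by construction, and it is injective because $1,\, i,\, j,\, k$ are $\R$-linearly independent in $\h(\R)$, so $1, I, J, K$ are $\R$-linearly independent (nonzero rescalings), hence \emph{a fortiori} $\Q$-linearly independent; thus a nonzero $\Q$-linear combination of $1,\mathbf{i},\mathbf{j},\mathbf{k}$ is sent to a nonzero element of $\h(\R)$. Since the algebra $\left(\frac{x,y}{\Q}\right)$ with $x,y<0$ is totally definite (it has no nonzero $\Q$-rational zero of the reduced norm $a_0^2-xa_1^2-ya_2^2+xya_3^2$), the statement that $\h_{x,y}(\Q)$ is isomorphic to a totally definite quaternion algebra follows. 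No step is a real obstacle; the only mild subtlety is keeping the signs of $|x|$ and $|y|$ straight so that $I^2=x$ and $J^2=y$ come out with the correct (negative) values, and checking that the square root $\sqrt{|xy|}$ in the definition of $K$ is compatible with $IJ$ rather than $-IJ$.
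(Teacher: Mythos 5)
Your proof is correct and follows exactly the same route as the paper, which simply exhibits the rescaling map $1\mapsto 1$, $\sqrt{|x|}\,i\mapsto i$, $\sqrt{|y|}\,j\mapsto j$ and declares the verification easy; you have merely filled in the sign checks $I^2=x$, $J^2=y$, $IJ=-JI=K$ and the injectivity/surjectivity details that the paper leaves implicit.
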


\begin{proof}
It is easy to prove that $\h_{x,y}(\Q)$ is a subalgebra of $\h(\R)$. The isomorphism between $\h_{x,y}(\Q)$ and $\left(\frac{x,y}{\Q}\right)$ is obtained by sending $1$ to $1$, $\sqrt{x}i$ to $i$ and $\sqrt{y}j$ to $j$.
\end{proof}

Let us denote the isomorphism described in the proof above by $\Lambda_1$. So
\begin{align*}
\Lambda_1:  \h_{x,y}(\Q) & \rightarrow \left(\frac{x,y}{\Q}\right) \\
a_0 + a_1\sqrt{\vert x \vert}i+a_2\sqrt{\vert y \vert}j+a_3\sqrt{\vert xy\vert }k & \mapsto  a_0 + a_1i+a_2j+a_3k
\end{align*}

Denote by $\tilde{\Lambda}$ the extensions of $\Lambda_1$ to the matrix algebras, i.e.
$$\tilde{\Lambda}: M_2(\h_{x,y}(\Q))  \rightarrow  M_2\left(\left(\frac{x,y}{\Q}\right)\right),$$
and let $\Lambda$ be the restriction of $\tilde{\Lambda}$ to the group $\SL_2(\h_{x,y}(\Q))$, i.e.
$$\Lambda: \SL_2(\h_{x,y}(\Q))  \rightarrow  \SL_2\left(\left(\frac{x,y}{\Q}\right)\right).$$

Consider the order $\left(\frac{x,y}{\Z}\right)$ in $\left(\frac{x,y}{\Q}\right)$. Then $\tilde{\Lambda}^{-1}\left(\left(\frac{x,y}{\Z}\right)\right)$ equals the order $\h_{x,y}(\Z)$ which is a discrete subgroup of $\h(\R)$ and $\Lambda^{-1}\left(\SL_2\left(\left(\frac{x,y}{\Z}\right)\right)\right)$ equals $\SL_2\left(\h_{x,y}(\Z)\right)$

Algorithms~\ref{algorithm1} and \ref{algorithm2} are stated for congruence subgroups of $\PSL_+(\Gamma_n(\Z))$.  Let $G$ be a congruence subgroup of a given level in $\SL_2\left( 
\left(\frac{x,y}{\Z}\right)\right)$. Then the group 
$\chi^{-1}\Lambda^{-1}(G)$, where $\chi$ was defined in (\ref{defchi}) is an arithmetic lattice of $\PSL_+(\Gamma_n(\R))$, but which is not contained in $\PSL_+(\Gamma_n(\Z))$. Thus this group cannot be an input group for Algorithm~\ref{algorithm1} and \ref{algorithm2}. Nevertheless both algorithms can be easily modified to use as inputs discrete subgroups of $\PSL_+(\Gamma_n(\R))$, hence $\Z$-orders in $\PSL_+(\Gamma_n(\R))$, that can be presented in some fixed algorithmic form.
In fact, as the input set is an arithmetic lattice, all the matrix coefficients are elements of an algebraic number field. By restriction of scalars, an arithmetic lattice can be seen as $\Z$-lattice of higher rank and hence a standard Turing machine may be used. For more information on algorithms with algebraic numbers, we refer the interested reader to \cite{Cohen}.

With this at hand we get the following method for computing a finite set of generators of a finite index subgroup of $\SL_2\left(\frac{x,y}{\Z}\right)$.

\begin{method}\label{methodesecondaire}
Let $x$ and $y$ be negative integers and let $\left(\frac{x,y}
{\Z}\right)$ be the totally definite quaternion ring over $\Z$. 
Let $G$ be a congruence subgroup of level $l$ of $\SL_2\left( 
\left(\frac{x,y}{\Z}\right)\right)$. Construct the group 
$\chi^{-1}\Lambda^{-1}(G)$, where $\chi$ was defined in (\ref{defchi}). Apply
algortithms~\ref{algorithm1} and \ref{algorithm2}, modified as explained above, to $\chi^{-1}\Lambda^{-1}
(G)$ to get a generating set $S$ of a finite index subgroup of
$\chi^{-1}\Lambda^{-1}(G)$. Then $\Lambda\chi(S)$ is a finite generating set 
of a finite index subgroup of $G$. 
\end{method}

\begin{proof}
The proof is similar to the proof of Method~\ref{methodeprincipale}.
\end{proof}

\section{A Short Example}\label{sectionexample}

In this section we will present an example: the order $\SL_2(\h(\Z))$ in the $2$-by-$2$ matrix ring over the classical rational quaternion algebra $\h(\Q)$. So let $G$ be the group $\SL_+(\Gamma_4(\Z))$. We first determine $G_{i_4}$, the stabilizer of $i_4$, in order to compute a fundamental domain $\F_{i_4}$. By Lemma~\ref{su}, $M \in G_{i_4}$ if and only if $M=\begin{pmatrix}  \alpha & \gamma \\ \gamma' &  \alpha' \end{pmatrix}$ with $\vert \alpha \vert^2+ \vert \gamma \vert^2=1$. As $\vert \alpha \vert, \vert \gamma \vert \in \Z$, the latter is true if and only if $\vert \alpha \vert = 1$ and $\vert \gamma \vert =0$ or the inverse. Observe that $\vert \alpha \vert = 1 $ if and only if $\alpha \in \lbrace \pm 1, \pm i_1, \pm i_2,\pm i_3, \pm i_1i_2, \pm i_1i_3, \pm i_2i_3, \pm i_1i_2i_3  \rbrace$. Hence 
$$G_{i_4} = \left\langle \begin{pmatrix} 0 & 1 \\ -1 & 0 \end{pmatrix}, \begin{pmatrix} i_1 & 0 \\ 0 & -i_1 \end{pmatrix}, \begin{pmatrix} i_2 & 0 \\ 0 & -i_2 \end{pmatrix} , \begin{pmatrix} i_3 & 0 \\ 0 & -i_3 \end{pmatrix}  \right\rangle.$$
Note that the first matrix gives a reflection in the unit hemisphere in $\Hy^5$, while the other matrices give reflections in planes containing $0$ and perpendicular to $\partial \Hy^5$. It is now easy to see that 
$$\F_{i_4} = \lbrace z_0+z_1i_1+z_2i_2+z_3i_3+z_4i_4 \mid z_0^2+z_1^2+z_2^2 +z_3^2+z_4^2 > 1, z_1 > 0, z_2 > 0, z_3 > 0, z_4 >0 \rbrace.$$

We apply Algorithm~\ref{algorithm1}. By Remark~\ref{algosimple}, the algorithm runs through $\begin{pmatrix} \alpha & \beta \\ \gamma & \delta \end{pmatrix} \in G \setminus G_{i_4}$ with $\vert \alpha \vert = 1$ and  $\gamma =0$. Hence, the algorithm runs through the matrices of the form $\begin{pmatrix} i & \beta \\ 0 & i \end{pmatrix}$, with $\vert i \vert = 1$ and $\beta$ in $\Gamma_4(\Z)$ such that $i\beta \in \V^4(\Z)$. In the same way as in Remark~\ref{algosimple}, if $M'=UM$, where $U \in G_{i_4}$, then $\Sigma_{M^{-1}}(i_4)=\Sigma_{M'^{-1}}(i_4)$ and hence it is enough to consider matrices of the form $\begin{pmatrix} 1 & \beta \\ 0 & 1 \end{pmatrix}$ with $\beta \in \V^4(\Z)$. The norm of such a matrix is $\vert \beta \vert^2$. It is easy to see that Algorithm~\ref{algorithm1} stops at the norm $K=1$ and returns the set 
$$B= \lbrace z_0+z_1i_1+z_2i_2+z_3i_3+z_4i_4 \mid -\frac{1}{2} < z_i < \frac{1}{2}, i=0,1,2,3 \rbrace.$$

We now turn to Algorithm~\ref{algorithm2}. Observe that $BF=\emptyset$ from the beginning and hence Algorithm~\ref{algorithm2} stops immediately and returns 
$$S= \left\lbrace \begin{pmatrix} i & \beta \\ 0 & i \end{pmatrix} \mid \vert i \vert = 1 , \beta=\pm 1, \pm i_1,\pm i_2,\pm i_3 \right\rbrace.$$ 
By Proposition~\ref{fundgen} and the preceding remarks, the group 
$$\left\langle \begin{pmatrix} 0 & 1 \\ -1 & 0 \end{pmatrix}, \begin{pmatrix} i_1 & 0 \\ 0 & -i_1 \end{pmatrix}, \begin{pmatrix} i_2 & 0 \\ 0 & -i_2 \end{pmatrix} , \begin{pmatrix} i_3 & 0 \\ 0 & -i_3 \end{pmatrix}, \begin{pmatrix} 1 & \beta \\ 0 & 1 \end{pmatrix} \mid \beta=\pm 1, \pm i_1,\pm i_2,\pm i_3  \right\rangle$$
is of finite index in $G$. 

Finally we apply the map $\chi$ to pull back the elements to $\SL_2(\h(\Z))$. We hence find that 
$$\left\langle \begin{pmatrix} 0 & 1 \\ -1 & 0 \end{pmatrix}, \begin{pmatrix} i & 0 \\ 0 & -i \end{pmatrix}, \begin{pmatrix} j & 0 \\ 0 & -j \end{pmatrix} , \begin{pmatrix} k & 0 \\ 0 & -k \end{pmatrix}, \begin{pmatrix} 1 & b \\ 0 & 1 \end{pmatrix} \mid \beta=\pm 1, \pm i,\pm j,\pm k \right\rangle$$
is a subgroup of finite index in $\SL_2(\h(\Z))$. \\

\noindent \textbf{Acknowledgements} I would like to thank Prof.\,Ruth Kellerhals for fruitful discussions on this subject during my stay at Fribourg University in autumn 2014. Moreover I would like to thank Prof.\,Saugata Basu on some helpful insight on algorithms involving semi-algebraic set. Finally I am grateful for the comments of the anonymous referee that helped improving this paper.

\bibliography{cliffordbib}
\bibliographystyle{abbrv}

\end{document}